\newtheorem{theorem}{Theorem}[section]
\newtheorem{lemma}[theorem]{Lemma}
\newtheorem{proposition}[theorem]{Proposition}
\newtheorem{corollary}[theorem]{Corollary}
\newtheorem{definition}[theorem]{Definition}
\newtheorem{remark}[theorem]{Remark}
\newtheorem{example}[theorem]{Example}
\numberwithin{equation}{section}
\DeclareMathOperator*{\essinf}{ess\,inf}
\DeclareMathOperator{\di}{div}
\newcommand{\bd}{\operatorname{BD}}
\newcommand{\bv}{\operatorname{BV}}
\renewcommand{\geq}{\geqslant}
\newcommand{\ep}{\varepsilon}
\newcommand{\dif}{\operatorname{d}\!}
\newcommand{\spt}{\operatorname{spt}}
\newcommand{\A}{A}
\newcommand{\N}{\mathbb{N}}
\newcommand{\R}{\mathds{R}}
\newcommand{\locc}{\operatorname{loc}}
\newcommand{\dista}{\operatorname{dist}}
\newcommand{\wstar}{\stackrel{*}{\rightharpoonup}}
\newcommand{\ball}{\operatorname{B}}
\newcommand{\sobo}{\operatorname{W}}
\newcommand{\lebe}{\operatorname{L}}
\newcommand{\hold}{\operatorname{C}}
\newcommand{\D}{\operatorname{D}\!}
\renewcommand{\leq}{\leqslant}
\newcommand{\id}{\operatorname{Id}}
\newcommand{\sg}{\nabla}
\newcommand{\bmo}{\operatorname{BMO}}
\newcommand{\wbit}{\stackrel{\operatorname{b}}{\rightharpoonup}}
\newcommand{\res}{\mathbin{\vrule height 1.4ex depth 0pt width
0.13ex\vrule height 0.13ex depth 0pt width 1.0ex}}
\begin{document}

\title[On a Neumann Problem for linear growth functionals]{On a Neumann problem for variational functionals of linear growth}

\thanks{L.~Beck is grateful for the support by M.O.P.S.~of the Mathematical Institute at the University of Augsburg. M.~Bul\'{\i}\v{c}ek's work was supported by the ERC-CZ project LL1202 financed by the Ministry of Education, Youth and Sports, Czech Republic. M.~Bul\'{\i}\v{c}ek is a member of the Ne\v{c}as center for Mathematical Modeling. F.~Gmeineder gratefully acknowledges financial support through the EPSRC during his doctoral studies.}

\author[L. Beck]{Lisa Beck}
\address{Institut f\"{u}r Mathematik, Universit\"{a}t Augsburg, Universit\"{a}tsstr. 14, 86159~Augsburg, Germany}
\email{lisa.beck@math.uni-augsburg.de}

\author[M. Bul\'{\i}\v{c}ek]{Miroslav Bul\'{\i}\v{c}ek}
\address{Mathematical Institute, Faculty of Mathematics and Physics, Charles University, Sokolovsk\'{a} 83, 186~75, Prague, Czech Republic}
\email{mbul8060@karlin.mff.cuni.cz}

\author[F. Gmeineder]{Franz Gmeineder}
\address{Mathematisches Institut, Universit\"at Bonn, Endenicher Allee 60, 53115 Bonn, Germany}
\email{fgmeined@math.uni-bonn.de}

\keywords{Convex variational problems, linear growth, Neumann problem, Regularity}

\subjclass[2010]{49N60,35A01,35J70,49N15}


\maketitle

\begin{abstract}
We consider a Neumann problem for strictly convex variational functionals of linear growth. We establish the existence of minimisers among $\sobo^{1,1}$-functions provided that the domain under consideration is simply connected. Hence, in this situation, the relaxation of the functional to the space of functions of bounded variation, which has better compactness properties, is not necessary. Similar $\sobo^{1,1}$-regularity results for the corresponding Dirichlet problem are only known under rather restrictive convexity assumptions limiting its non-uniformity up to the borderline case of the minimal surface functional, whereas for the Neumann problem no such quantified version of strong convexity is required.
\end{abstract}

\tableofcontents

\section{Introduction}

Let $\Omega\subset\R^{n}$ be a bounded Lipschitz domain and suppose that $f\in\hold^{1}(\R_{0}^{+})$ is a strictly convex function which satisfies $f(0) = f'(0)= 0$ and which is of linear growth, i.e., there exist two constants $0< \nu \leq L<\infty$ such that
\begin{align}\label{eq:lingrowth}
\nu t - L \leq f(t) \leq L(t+1)\qquad\text{for all } t\in\R^+_0.
\end{align}
Given a map $T_{0}\in \sobo^{1,\infty}(\Omega;\R^{N\times n})$ for some $N\geq 1$, in the present paper we study existence and regularity properties of weak solutions of the system
\begin{align}\label{eq:PDE}
\di\bigg(\frac{f'(|\nabla u|)\nabla u}{|\nabla u|} \bigg)=\di(T_{0})\qquad \text{in }\Omega
\end{align}
subject to the \emph{Neumann-type} boundary condition
\begin{align}\label{eq:neumann}
\frac{f'(|\nabla u|)\nabla u}{|\nabla u|}\cdot\nu_{\partial\Omega}=T_{0}\cdot\nu_{\partial\Omega}   \qquad \text{on }\partial \Omega,
\end{align}
with $\nu_{\partial \Omega}$ denoting the outward pointing unit normal field of the boundary~$\partial \Omega$. In this situation, we have different options to come up with an appropriate concept of weak solutions of~\eqref{eq:PDE} subject to the boundary condition~\eqref{eq:neumann}. Firstly, supposing for the moment that~$u$ belongs to the space~$\hold^{2}(\overline{\Omega};\R^{N})$, we observe that all expressions are well-defined in the classical sense. Thus, applying the inner product to both sides of~\eqref{eq:PDE} with a regular test function $\varphi\in\hold^{1}(\overline{\Omega};\R^{N})$, integrating over $\Omega$ and using the integration by parts formula, we obtain
\begin{multline*}
\int_{\partial\Omega} \frac{f'(|\nabla u|)\nabla u}{|\nabla u |}\cdot \varphi \otimes \nu_{\partial\Omega}  \dif\mathcal{H}^{n-1}  -\int_{\Omega} \frac{f'(|\nabla u|)\nabla u}{|\nabla u |} \cdot \nabla \varphi \dif x \\
= \int_{\partial\Omega} T_{0} \cdot \varphi \otimes \nu_{\partial\Omega} \dif\mathcal{H}^{n-1}-\int_{\Omega} T_{0} \cdot \nabla\varphi \dif x.
\end{multline*}
In view of the Neumann-type constraint~\eqref{eq:neumann}, the boundary terms disappear. Combined with a density argument, this motivates the following definition of a weak solution:

\begin{definition}[Weak Solution]\label{def:weaksolution_Neumann}
Let $T_{0}\in \sobo^{1,\infty}(\Omega;\R^{N\times n})$ and suppose that $f\in\hold^{1}(\R_{0}^{+})$ satisfies $f(0) = f'(0)= 0$ and the linear growth assumption~\eqref{eq:lingrowth}. We say that a function $u\in\sobo^{1,1}(\Omega;\R^{N})$ is a \emph{weak solution} to the system~\eqref{eq:PDE} subject to the Neumann-type boundary constraint~\eqref{eq:neumann} if there holds
\begin{equation}
\label{eq:EL_system}
\int_{\Omega} \frac{f'(|\nabla u|)\nabla u}{|\nabla u |} \cdot \nabla\varphi \dif x
= \int_{\Omega} T_{0} \cdot \nabla\varphi \dif x \qquad \text{for all } \varphi \in \sobo^{1,1}(\Omega;\R^{N}).
\end{equation}
\end{definition}

Alternatively, we may rely on the special structure of the system and interpret it as the Euler--Lagrange system associated to the variational problem
\begin{align}\label{eq:varprin}
\text{to minimise} \quad \mathfrak{F}[w] \coloneqq \int_{\Omega} \big[ f(|\nabla w|)-  T_{0} \cdot \nabla w \big] \dif x \quad \text{among all  } w \in \sobo^{1,1}(\Omega;\R^{N}).
\end{align}
Studying variations of a minimiser in a standard way on the one hand and employing the convexity of the integrand~$f$ on the other hand, we immediately establish the following connection between~\eqref{eq:PDE},~\eqref{eq:neumann} and the variational principle~\eqref{eq:varprin}.

\begin{lemma}\label{lemma:weaksolution_variational}
Let $T_{0}\in \sobo^{1,\infty}(\Omega;\R^{N\times n})$ and suppose that $f\in\hold^{1}(\R_{0}^{+})$ is convex and that it satisfies $f(0) = f'(0)= 0$ and the linear growth assumption~\eqref{eq:lingrowth}. Then a function $u\in\sobo^{1,1}(\Omega;\R^{N})$ is a weak solution of~\eqref{eq:PDE} subject to the Neumann-type boundary constraint~\eqref{eq:neumann} \textup{(}in the sense of Definition~\ref{def:weaksolution_Neumann}\textup{)} if and only if it is a minimiser of the variational problem~\eqref{eq:varprin}.
\end{lemma}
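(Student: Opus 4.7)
The plan is to work with the auxiliary integrand $F\colon\R^{N\times n}\to\R$, $F(z):=f(|z|)$, which by $f\in\hold^{1}(\R_{0}^{+})$ together with $f'(0)=0$ is of class $\hold^{1}$ on all of $\R^{N\times n}$, with gradient
\[
\nabla F(z)=\frac{f'(|z|)\,z}{|z|}\quad (z\neq 0),\qquad \nabla F(0)=0.
\]
Before doing anything else, I would record that convexity of $f$ combined with the linear growth bound~\eqref{eq:lingrowth} forces $f'$ to be bounded; indeed, for any $s\geq 0$ and $t>s$, convexity gives $f(t)\geq f(s)+f'(s)(t-s)$, and inserting $f(t)\leq L(t+1)$ and letting $t\to\infty$ yields $f'(s)\leq L$. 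Thus $|\nabla F(z)|\leq L$ uniformly in $z$, and convexity of $f$ on $\R_{0}^{+}$ together with monotonicity of $t\mapsto \nabla F(tz)\cdot z$ implies that $F$ itself is convex on $\R^{N\times n}$.

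For the direction "minimiser $\Longrightarrow$ weak solution", I would fix $\varphi\in\sobo^{1,1}(\Omega;\R^{N})$ and consider the real-valued map $g(t):=\mathfrak{F}[u+t\varphi]$ for $t\in\R$. Pointwise in $x\in\Omega$ the integrand is differentiable in $t$ with derivative $\nabla F(\nabla u+t\nabla\varphi)\cdot\nabla\varphi-T_{0}\cdot\nabla\varphi$, which is bounded in absolute value by $(L+\|T_{0}\|_{\infty})|\nabla\varphi|\in\lebe^{1}(\Omega)$, uniformly in $t$ on any bounded interval. Dominated convergence therefore lets us differentiate under the integral to obtain $g'(0)$, and the minimality of $u$ gives $g'(0)=0$, which is precisely~\eqref{eq:EL_system}.

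For the converse, I would invoke convexity of $F$: for every $w\in\sobo^{1,1}(\Omega;\R^{N})$ and a.e.~$x\in\Omega$,
\[
F(\nabla w)-F(\nabla u)\geq \nabla F(\nabla u)\cdot(\nabla w-\nabla u).
\]
Integrating and subtracting $\int_{\Omega}T_{0}\cdot(\nabla w-\nabla u)\dif x$ from both sides yields
\[
\mathfrak{F}[w]-\mathfrak{F}[u]\geq \int_{\Omega}\bigl(\nabla F(\nabla u)-T_{0}\bigr)\cdot\nabla(w-u)\dif x,
\]
and, since $w-u\in\sobo^{1,1}(\Omega;\R^{N})$, the right-hand side vanishes by~\eqref{eq:EL_system} used as a test with $\varphi=w-u$. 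Hence $\mathfrak{F}[u]\leq\mathfrak{F}[w]$ for every admissible $w$.

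Nothing in this argument is deep; the only point that needs a careful word is the differentiability of $z\mapsto F(z)$ at the origin (handled by $f'(0)=0$) and the $\lebe^{1}$-dominant for differentiation under the integral (handled by the a priori bound $|\nabla F|\leq L$ derived above). These two observations, together with the convexity inequality, are the only ingredients required.
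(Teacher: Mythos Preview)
Your proof is correct and follows exactly the approach the paper indicates in the sentence preceding the lemma: one direction by studying variations $t\mapsto\mathfrak{F}[u+t\varphi]$ (with the $\lebe^{1}$-dominant coming from the boundedness of $f'$), the other by the convexity inequality for $F(z)=f(|z|)$ tested against the Euler--Lagrange identity with $\varphi=w-u$. The paper itself gives no further detail, so you have simply filled in the routine steps it omits.
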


Although we shall exclusively study weak solutions in all of what follows, we wish to mention for the sake of completeness that it is possible to deduce the validity of~\eqref{eq:PDE} subject to~\eqref{eq:neumann} provided that a suitable a~priori regularity assumption on the solution is made:

\begin{lemma}\label{lem:consistencylemma}
Let $T_{0}\in \sobo^{1,\infty}(\Omega;\R^{N\times n})$ and suppose that $f\in\hold^{2}(\R_{0}^{+})$ satisfies $f(0) = f'(0)= 0$. If $u\in\sobo^{2,\infty}(\Omega;\R^{N})$ is a minimiser of the variational principle~\eqref{eq:varprin}, then it satisfies~\eqref{eq:PDE} and~\eqref{eq:neumann} in the pointwise sense.
\end{lemma}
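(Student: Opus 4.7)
The plan is to upgrade the weak Euler--Lagrange identity satisfied by the minimiser~$u$ to a pointwise identity by an integration by parts, which becomes legitimate precisely because of the additional hypothesis $u \in \sobo^{2,\infty}(\Omega;\R^{N})$. To implement this, I set $F(\xi) \coloneqq f(|\xi|)$ and note that the hypotheses $f \in \hold^{2}(\R_{0}^{+})$ and $f(0)=f'(0)=0$ force $F \in \hold^{1}(\R^{N \times n})$ with
\[
DF(\xi) \;=\; \begin{cases} f'(|\xi|)\xi/|\xi|, & \xi \neq 0, \\ 0, & \xi = 0,\end{cases}
\]
the extension being continuous at the origin because $f'(t)=O(t)$ as $t\to 0^{+}$. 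Since $\nabla u$ is bounded, differentiation under the integral sign in $t \mapsto \mathfrak{F}[u + t\varphi]$ is immediate for $\varphi \in \hold^{1}(\overline{\Omega};\R^{N})$ by the local Lipschitz property of $F$, and the minimality of~$u$ at $t = 0$ yields the weak identity~\eqref{eq:EL_system}; this identity extends to all $\varphi \in \sobo^{1,1}(\Omega;\R^{N})$ by density because $DF(\nabla u) \in \lebe^{\infty}$.

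The key technical step is to upgrade the flux $V(x) \coloneqq DF(\nabla u(x))$ from $\lebe^{\infty}$ to a Sobolev field. Off the origin one finds
\[
D^{2}F(\xi) \;=\; \frac{f'(|\xi|)}{|\xi|}\bigl(\id - \tfrac{\xi}{|\xi|}\otimes\tfrac{\xi}{|\xi|}\bigr) + f''(|\xi|)\tfrac{\xi}{|\xi|}\otimes\tfrac{\xi}{|\xi|},
\]
and both $f''(t)$ and $f'(t)/t$ remain bounded as $t \to 0^{+}$ (the latter by L'H\^opital's rule together with $f'(0)=0$). Hence $DF$ is Lipschitz on every bounded subset of $\R^{N\times n}$, and the standard chain rule for compositions of Lipschitz maps with Sobolev functions yields $V \in \sobo^{1,\infty}(\Omega;\R^{N \times n})$.

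With this regularity in hand, the usual integration by parts applies to~\eqref{eq:EL_system}. Testing first against $\varphi \in \ccinfty(\Omega;\R^{N})$ and invoking the fundamental lemma of the calculus of variations delivers $\di(V - T_{0}) = 0$ almost everywhere in $\Omega$, i.e.~\eqref{eq:PDE} pointwise. Reinserting this into~\eqref{eq:EL_system} for an arbitrary $\varphi \in \hold^{1}(\overline{\Omega};\R^{N})$ leaves only the boundary term $\int_{\partial\Omega}(V-T_{0})\nu_{\partial\Omega}\cdot\varphi\,\dif\mathcal{H}^{n-1}=0$; density of the traces of $\hold^{1}(\overline{\Omega};\R^{N})$-functions in $\lebe^{1}(\partial\Omega;\R^{N})$ then forces~\eqref{eq:neumann} pointwise $\mathcal{H}^{n-1}$-almost everywhere on $\partial\Omega$.

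The only genuinely non-routine point is the second step: one must verify that the a~priori singular factor $f'(|\xi|)/|\xi|$ does not obstruct Lipschitz regularity of the flux at the contact set $\{\nabla u = 0\}$. This is precisely where the assumption $f \in \hold^{2}$ (strictly stronger than the $\hold^{1}$-regularity sufficient for the weak formulation) together with the cancellation $f'(0) = 0$ is used in an essential way. All other ingredients are routine manipulations with weak derivatives and density of boundary traces.
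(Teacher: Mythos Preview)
Your proof is correct and follows the same route as the paper: establish the weak Euler--Lagrange identity, upgrade the flux $V=DF(\nabla u)$ to $\sobo^{1,\infty}$ via the chain rule, and integrate by parts to separate the interior PDE from the boundary condition. The only cosmetic difference is that the paper isolates the boundary contribution by multiplying the test function with a cutoff $\eta_\delta$ supported in a $\delta$-neighbourhood of $\partial\Omega$ and sending $\delta\to 0$, whereas you invoke the already-established interior identity to kill the bulk term directly; your version is also more explicit than the paper about why $f\in\hold^{2}$ together with $f'(0)=0$ renders $DF$ Lipschitz on bounded sets.
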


For the reader's convenience, the proof of this lemma is provided in Section~\ref{sec:consistencylemma} of the appendix. Let us further note that the above variational principle~\eqref{eq:varprin} ignores the addition of constants to competitors. To overcome this inherent source of non-uniqueness, we shall additionally require minimisers $u\colon\Omega\to\R^{N}$ to be of vanishing mean value on~$\Omega$, i.e.,~to satisfy
\begin{equation*}
(u)_{\Omega}\coloneqq \frac{1}{\mathscr{L}^{n}(\Omega)}\int_{\Omega}u\dif x = 0.
\end{equation*}

By the linear growth hypothesis~\eqref{eq:lingrowth} and the concomitant lack of weak compactness in the non-reflexive space $\sobo^{1,1}(\Omega;\R^{N})$, minimising sequences for $\mathfrak{F}$ might develop concentrations. Hence, the distributional gradients of minimisers have to be assumed to be matrix-valued Radon measures a priori. This leads to studying a suitably relaxed form of the aforementioned variational problem on the space $\bv(\Omega;\R^{N})$, the space of functions of bounded variation. The purpose of the present paper is to demonstrate that, under some sort of attainability condition imposed on the data~$T_0$, the singular part of the gradients of weak solutions of the system~\eqref{eq:PDE} subject to the Neumann-type constraint~\eqref{eq:neumann} --~or equivalently of minimisers of the variational problem~\eqref{eq:varprin}~-- do in fact vanish, whenever the Lipschitz domain~$\Omega$ is \emph{simply connected}. Thus, in this setting, weak solutions genuinely belong to the space $\sobo^{1,1}(\Omega;\R^{N})$ and the relaxation of the problem to~$\bv(\Omega;\R^{N})$ is indeed not necessary.

Due to the specific form of the variational problem~\eqref{eq:varprin}, this task appears in the spirit of some sort of non-linear potential theory for linear growth problems whose connection to perhaps more familiar settings we shall describe now. The variational problem~\eqref{eq:varprin} formally leads to the Euler--Lagrange system
\begin{align}\label{eq:eulerlag}
\di\bigg(\frac{f'(|\nabla u|)\nabla u}{|\nabla u|}\bigg)=\di(T_{0})\qquad\text{in } \Omega.
\end{align}
Neglecting for a moment the linear growth assumption~\eqref{eq:lingrowth} and setting $f(t) = t^p/p$ for some $p \in (1,\infty)$, the system~\eqref{eq:eulerlag} subject to the boundary condition~\eqref{eq:neumann} corresponds to the weak formulation of the inhomogeneous $p$-Laplacean Neumann problem
\begin{equation*}
\di(|\nabla u|^{p-2}\nabla u)=\di(T_{0})
\end{equation*}
or, equivalently, the minimisation problem~\eqref{eq:varprin}. This, as a consequence of the direct method of the calculus of variations, is solved by some function $u \in \sobo^{1,p}(\Omega;\R^{N})$. Here, a typical issue is to transfer regularity properties of the data~$T_{0}$ to the gradient $\nabla u$ of the solution, or more specifically to the function $\A_{p}(\nabla u)\coloneqq |\nabla u|^{p-2}\nabla u$, which is adapted to the particular growth properties of the elliptic $p$-Laplacean system under consideration. For instance, it is known that $T_{0}\in\lebe^{\infty}(\Omega;\R^{N \times n})$ implies $\A_{p}(\nabla u)\in\bmo_{\locc}(\Omega;\R^{N \times n})$ under some fairly general regularity assumptions on the domain~$\Omega$. This result is optimal in the sense that in general it cannot be improved to $\A_{p}(\nabla u)\in\lebe^{\infty}_{\locc}(\Omega;\R^{N \times n})$: even in the simplest linear case $p=2$ the map $T_{0} \mapsto \nabla u$ is a local singular integral of convolution type which maps $\lebe^{\infty}(\Omega;\R^{N \times n}) \to \bmo_{\locc}(\Omega;\R^{N \times n})$. 

Now, in our situation of~$f$ satisfying the linear growth assumption~\eqref{eq:lingrowth} and setting $\A_f(z)\coloneqq f'(|z|)z/|z|$ for $z\in\R^{N \times n}$, a statement like $\A_f(\nabla u)\in\lebe^{\infty}(\Omega;\R^{N \times n})$ would be vacuous: Since~$f'$ and thus~$\A_f$ is automatically bounded by assumption, we would be able to conclude $\A_f(\D u)\in\lebe^{\infty}(\Omega;\R^{N \times n})$ without further efforts \emph{provided} that~$\D u$ would be known to exist as a function. In this sense, the correct question is under which conditions on $T_{0}$ we can in fact conclude the existence of a $\sobo^{1,1}$-minimiser. As such, the theme of the present paper canonically generalises key aspects of the by now well-known potential theory in the superlinear growth regime (cp.~\cite{IWANIEC83,MINGIONE11,DKS12}) to the linear growth situation. 

Before we embark on a detailed description of our results, we first discuss the main assumption of a suitable coerciveness condition on the functional~$\mathfrak{F}$ which will be imposed throughout the paper.

\subsection{Coerciveness}\label{sec:coerciveness}

Since both constituents of the integrand at our disposal are of linear growth, we must impose an additional balancing condition between~$f$ and $T_{0}$. As a crucial assumption of our paper, we shall therefore require
\begin{align}\label{eq:coercivcond}
\|T_{0}\|_{\lebe^{\infty}(\Omega;\R^{N \times n})}< f^{\infty}(1),
\end{align}
where $f^{\infty}(1)$ is defined as the limit $\lim_{t \to \infty} f(t)/t$. As by convexity of~$f$, the function $t \mapsto f(t)/t$ is non-decreasing, its limit for $t \to \infty$ exists and is in view of~\eqref{eq:lingrowth} indeed finite and strictly positive, with $f^{\infty}(1) \geq f(t)/t$ for all $t \in \R^+$. The significance of this assumption becomes transparent when studying the coerciveness (or its failure) of the functional~$\mathfrak{F}$ in the class $\sobo^{1,1}(\Omega;\R^{N})$ with vanishing mean value on~$\Omega$. In fact, if $T_0 \in \lebe^{\infty}(\Omega;\R^{N \times n})$ satisfies~\eqref{eq:coercivcond}, then we can first determine $R_0$ depending only on~$f$ and $T_0$ such that
\begin{equation*}
 \frac{f(t)}{t} \geq \frac{1}{2} \big(f^{\infty}(1) + \|T_{0}\|_{\lebe^{\infty}(\Omega;\R^{N \times n})}\big) \qquad \text{for } t \geq R_0
\end{equation*}
and then compute, for an arbitrary $w \in \sobo^{1,1}(\Omega;\R^{N})$, that
\begin{align*}
 \mathfrak{F}[w] & = \int_{\Omega} \big[ f(|\nabla w|)-  T_{0} \cdot \nabla w \big] \dif x \\
  & \geq \int_{\Omega \cap \{|\nabla w| \geq R_0 \}} \Big[ \frac{1}{2} \big(f^{\infty}(1) + \|T_{0}\|_{\lebe^{\infty}(\Omega;\R^{N \times n})}\big) - \|T_{0}\|_{\lebe^{\infty}(\Omega;\R^{N \times n})}\Big]  | \nabla w | \dif x \\
  & \qquad {}- \int_{\Omega \cap \{|\nabla w| < R_0 \}} \|T_{0}\|_{\lebe^{\infty}(\Omega;\R^{N \times n})}  | \nabla w | \dif x \\
  & \geq\frac{1}{2} \big(f^{\infty}(1) - \|T_{0}\|_{\lebe^{\infty}(\Omega;\R^{N \times n})}\big) \|\nabla w\|_{\lebe^{1}(\Omega;\R^{N \times n})} -f^{\infty}(1) |\Omega| R_0.
\end{align*}
As a consequence, if $(w_{k})_{k \in \N}$ is a sequence in $\sobo^{1,1}(\Omega;\R^{N})$ with vanishing mean values and $\|w_{k}\|_{\sobo^{1,1}(\Omega;\R^{N})} \to \infty$ as $k\to\infty$, then $\mathfrak{F}[w_{k}]\to \infty$ as $k\to\infty$. Condition~\eqref{eq:coercivcond} thus is an instrumental ingredient to establish the existence of minimisers. To further stress its necessity, we wish to supply the following two examples which demonstrate that, in absence of condition~\eqref{eq:coercivcond}, minimisers do not need to exist at all. This already happens in the scalar case $N=n=1$.

\begin{example}[Non-Existence of minimisers if $\|T_{0}\|_{\lebe^{\infty}(\Omega;\R^{N\times n})}=f^{\infty}(1)$]\label{ex:nonexistence1} We consider the shifted area-integrand
\begin{equation*}
f(t)\coloneqq \sqrt{1+|t|^{2}} - 1 \qquad \text{for } t\in\R
\end{equation*}
\textup{(}which verifies the linear growth assumption~\eqref{eq:lingrowth} with $\nu=L=1$\textup{)}, $T_{0}\equiv 1$ and $\Omega\coloneqq (-1,1)$. In this situation, we have $f^{\infty}(1)=1$ and the functional $\mathfrak{F}$ becomes
\begin{align*}
\mathfrak{F}[w] = \int_{-1}^{1} \big[ \sqrt{1+|w'|^{2}} - 1- w' \big] \dif x ,\qquad \text{for } w\in \sobo^{1,1}((-1,1)).
\end{align*}
Furthermore, since $\sqrt{1+|t|^{2}}\geq t$ for all $t\in\R$, we have $\inf_{\sobo^{1,1}((-1,1))}\mathfrak{F}\geq -2$.

We then define a sequence $(u_k)_{k\in \N}$ of functions in $\sobo^{1,1}((-1,1))$ with vanishing mean value on~$(-1,1)$, by setting $u_{k}(x)\coloneqq kx$ for $k \in \N$. Inserting~$u_{k}$ into $\mathfrak{F}$ yields
\begin{equation*}
\mathfrak{F}[u_{k}]=2 \big[ \sqrt{1+k^{2}}-k - 1 \big] \to - 2 \quad \text{as } k\to\infty
\end{equation*}
so that $\inf_{\sobo^{1,1}((-1,1))}\mathfrak{F}=-2$ indeed. Assuming that a minimiser $u \in\sobo^{1,1}((-1,1))$ of~$\mathfrak{F}$ exists, we deduce, by positivity of the integrand, that $1+|v'|^{2}=|v'|^{2}$ holds $\mathscr{L}^{1}$-a.e., a contradiction. Therefore, no minimiser of $\mathfrak{F}$ exists in $\sobo^{1,1}((-1,1))$.
\end{example}

\begin{example}[Unboundedness of $\mathfrak{F}$ from below if $\|T_{0}\|_{\lebe^{\infty}(\Omega;\R^{N\times n})} > f^{\infty}(1)$]\label{ex:nonexistence2} In the setting of the previous example, we consider $T_{0}\equiv c$ for a constant $c>1$. For the same choice of the sequence $(u_{k})_{k \in \N}$, we then obtain
\begin{align*}
\mathfrak{F}[u_{k}]=2 \big[ \sqrt{1+k^{2}}-ck - 1 \big] \to -\infty,\qquad \text{as } k\to\infty
\end{align*}
which in conclusion shows $\inf_{\sobo^{1,1}((-1,1))}\mathfrak{F}=-\infty$.
\end{example}

In principle, the reasoning employed in Example~\ref{ex:nonexistence1} does not genuinely rule out the non-existence of minimisers for the so-called \emph{relaxed problem}, i.e., the minimisation of a suitable extension of $\mathfrak{F}$ to the space $\bv(\Omega;\R^{N\times n})$. However, even for the relaxed problem the assumption~\eqref{eq:coercivcond} turns out to be necessary for generalised minimisers to exist, see Example~\ref{ex:nonexistence3}.

\begin{remark}
Under the assumption~\eqref{eq:coercivcond} we can rewrite $T_{0}\in\sobo^{1,\infty}(\Omega;\R^{N\times n})$ as
\begin{equation*}
 T_{0} = \frac{f'(|S_{0}|)S_{0}}{|S_{0}|} \quad \text{for } S_0 \text{ given as } S_0 \coloneqq \frac{T_0}{|T_0|} (f')^{-1}(|T_0|).
\end{equation*}
Since~$f'$ is strictly increasing with values in~$[0,f^\infty(1))$ \textup{(}thus, invertible on this set\textup{)}, the map~$S_0$ is well-defined. With this identification, assumption~\eqref{eq:coercivcond} guarantees that~$\di T_0$ on the left-hand side of the system~\eqref{eq:PDE} is of the same structure as its left-hand side involving the unknown and thus, in principle, can be attained.
\end{remark}

\subsection{Main Result and Discussion}

We now pass to the description of the main result of the present paper. As mentioned above, one can easily extend the functionals $\mathfrak{F}$ to the space $\bv(\Omega;\R^{N})$. This will be done in a slightly more general setup than for functionals with radially symmetric integrands, and by means of the direct method of the calculus of variations, existence of \emph{$\bv$- \textup{(}or generalised\textup{)}} minimisers then follows (for the precise statement the reader is referred to Proposition~\ref{prop:existence_relaxed}). However, the main result of the present paper is the existence of $\sobo^{1,1}$-minimisers for $\mathfrak{F}$ in the radially symmetric case provided that~$\Omega$ is simply connected. More precisely, we will establish the following

\begin{theorem}\label{thm:main}
Let $\Omega$ be a simply connected, bounded Lipschitz domain in $\R^{n}$. Consider a strictly convex function $f\in\hold^{2}(\R_{0}^{+})$ which satisfies $f(0) = f'(0)= 0$, the linear growth condition~\eqref{eq:lingrowth} and the bound
\begin{equation}
\label{eq:sec_der_bound}
 f''(t) \leq L (1+t)^{-1} \qquad\text{for all } t \in\R^+_0,
\end{equation}
and let $T_{0}\in \sobo^{2,\infty}(\Omega;\R^{N\times n})$ verify~\eqref{eq:coercivcond}. Then there exists a weak solution $u\in\sobo^{1,1}(\Omega;\R^{N})$ of the system~\eqref{eq:PDE} subject to the Neumann-type boundary constraint~\eqref{eq:neumann} in the sense of Definition~\ref{def:weaksolution_Neumann}, and this weak solution is unique within the class of all admissible competitor maps $v\in\sobo^{1,1}(\Omega;\R^{N})$ that satisfy $(v)_{\Omega}=0$.
\end{theorem}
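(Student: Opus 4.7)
The plan is to approximate the degenerate linear-growth problem by a sequence of non-degenerate superlinear-growth problems, derive a uniform $\lebe^{\infty}$-bound for the associated stress fields that stays strictly below the threshold $\finfty(1)$, and then pass to the limit; uniqueness will follow at the end from strict convexity. For each $k\in\N$, set $f_{k}(t)\coloneqq f(t)+\tfrac{1}{2k}t^{2}$, which preserves strict convexity but upgrades the growth to quadratic. The direct method, combined with the coercivity provided by~\eqref{eq:coercivcond}, produces a unique minimiser $u_{k}\in\sobo^{1,2}(\Omega;\R^{N})$ of
\begin{equation*}
\mathfrak{F}_{k}[w]\coloneqq \int_{\Omega}\big[f_{k}(|\nabla w|)-T_{0}\cdot\nabla w\big]\dif x
\end{equation*}
among zero-mean competitors. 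Setting $\sigma_{k}\coloneqq f_{k}'(|\nabla u_{k}|)\nabla u_{k}/|\nabla u_{k}|$, the Euler--Lagrange system reads $\di(\sigma_{k}-T_{0})=0$ in $\Omega$ with the natural $(\sigma_{k}-T_{0})\cdot\nu_{\partial\Omega}=0$ on $\partial\Omega$, and standard elliptic regularity upgrades $u_{k}$ to at least $\sobo^{2,2}_{\locc}\cap\sobo^{1,\infty}_{\locc}$, which is enough to perform the forthcoming pointwise manipulations.

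The heart of the argument is the uniform bound $\|\sigma_{k}\|_{\lebe^{\infty}}\leq C$ with $C<\finfty(1)$ independent of $k$. Since $\sigma_{k}-T_{0}$ is divergence-free with zero normal trace on $\partial\Omega$ and $\Omega$ is simply connected, a Poincar\'e-type lemma furnishes a global potential representation of this field (row-wise as the curl of some $\Psi_{k}$), which is unavailable on topologically non-trivial domains. Leveraging the $\sobo^{2,\infty}$-regularity of $T_{0}$, one differentiates the Euler--Lagrange system and, exploiting the Uhlenbeck-type structure $f(|\xi|)$ (so that $|\sigma_{k}|=f_{k}'(|\nabla u_{k}|)$), derives a scalar elliptic inequality for $|\sigma_{k}|^{2}$. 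The bound $f''(t)\leq L(1+t)^{-1}$ controls the ratio between the radial and tangential eigenvalues of the Hessian of $f_{k}$ and keeps this scalar reduction well-posed. A maximum principle argument, with the Neumann boundary contribution handled via a Hopf-type estimate relying on $\nabla T_{0}\in\lebe^{\infty}$, then yields $\|\sigma_{k}\|_{\lebe^{\infty}}\leq\|T_{0}\|_{\lebe^{\infty}}+o(1)$ as $k\to\infty$, which is strictly below $\finfty(1)$ by~\eqref{eq:coercivcond} for $k$ large.

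From the stress bound, the identity $|\sigma_{k}|=f_{k}'(|\nabla u_{k}|)\geq f'(|\nabla u_{k}|)$ and the strict monotonicity of $f'$ together give a uniform $\lebe^{\infty}$-bound on $\nabla u_{k}$. Extracting a weak-$*$ convergent subsequence $u_{k}\wstar u$ in $\sobo^{1,\infty}(\Omega;\R^{N})$ yields a limit $u\in\sobo^{1,\infty}\subset\sobo^{1,1}$ with $(u)_{\Omega}=0$. A monotonicity argument (Minty's trick, invoking the strict convexity of $f_{k}$) improves weak convergence to a.e.~convergence of $\nabla u_{k}$, which is enough to pass to the limit in the Euler--Lagrange equation and conclude that $u$ is a weak solution in the sense of Definition~\ref{def:weaksolution_Neumann}. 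Uniqueness within the class of zero-mean competitors is then an immediate consequence of the strict convexity of the integrand combined with Lemma~\ref{lemma:weaksolution_variational}.

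The principal obstacle is the uniform $\lebe^{\infty}$-bound on $\sigma_{k}$ strictly below $\finfty(1)$. Both hypotheses of the theorem enter essentially here: simple-connectedness of $\Omega$ provides the global potential for the admissible divergence-free corrections $\sigma_{k}-T_{0}$, while the $\sobo^{2,\infty}$-regularity of $T_{0}$ supplies the source terms needed to differentiate the Euler--Lagrange system twice. The non-uniform ellipticity encoded by $f''(t)\leq L(1+t)^{-1}$ is of minimal-surface type, and in the vector-valued case $N>1$ one must work carefully within the Uhlenbeck structure in order to reduce the key estimate to a scalar maximum principle.
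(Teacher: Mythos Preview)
Your proposal diverges substantially from the paper's proof, and the central step --- the uniform $\lebe^{\infty}$ bound on $\sigma_{k}$ strictly below $f^{\infty}(1)$ via a maximum principle --- has two genuine gaps.

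First, the topological role you assign to simple connectedness is mismatched. Each row of $\sigma_{k}-T_{0}$ is a divergence-free $\R^{n}$-valued field; representing such a field globally as a curl requires vanishing of $H^{n-1}_{\mathrm{dR}}(\Omega)$, not of $H^{1}_{\mathrm{dR}}(\Omega)$. Simple connectedness gives only the latter, and for $n\geq 3$ these are not equivalent (e.g.\ a thickened sphere in $\R^{3}$ is simply connected but carries divergence-free fields without global vector potentials). The paper uses simple connectedness in the opposite direction: at the very end, to conclude that a \emph{curl-free} $\lebe^{1}$ limit is a gradient, which is exactly what $H^{1}=0$ buys.

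Second, and more seriously, the scalar maximum-principle reduction for $|\sigma_{k}|^{2}$ is not supported by the hypotheses. The Hessian of $z\mapsto f(|z|)$ has eigenvalues $f''(|z|)$ and $f'(|z|)/|z|$; the theorem imposes only the \emph{upper} bound~\eqref{eq:sec_der_bound} on $f''$, with no lower bound whatsoever --- $f''$ need only be positive almost everywhere. The ellipticity ratio is therefore completely uncontrolled, and this is precisely the regime (no $\mu$-ellipticity) that the theorem is designed to cover. A De Giorgi/Moser or Hopf-type argument for $|\sigma_{k}|^{2}$ would require exactly the kind of quantified lower ellipticity that is absent here. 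Were such a bound available, you would in fact obtain $u\in\sobo^{1,\infty}$, which is much stronger than the $\sobo^{1,1}$ conclusion actually proved; for the Dirichlet problem the analogous $\sobo^{1,1}$ statement is open beyond $\mu$-ellipticity with $\mu\leq 3$.

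The paper instead proceeds by energy methods: from the differentiated Euler--Lagrange system one obtains the uniform local weighted second-order estimate
\[
\sup_{k}\int_{K}\D_{z}A(\nabla u_{k})[\partial_{s}\nabla u_{k},\partial_{s}\nabla u_{k}]\dif x<\infty,
\]
which yields $A(\nabla u_{k})$ and an auxiliary scalar $\tilde h(|\nabla u_{k}|)$ uniformly bounded in $\sobo^{1,2}_{\locc}$. Compactness then gives pointwise a.e.\ convergence $\nabla u_{k}\to E\in\lebe^{1}$ (no $\lebe^{\infty}$ bound on gradients is ever claimed). A separate argument --- again using the weighted estimate, not a maximum principle --- shows that $E$ is curl-free in the distributional sense; \emph{this} is where simple connectedness enters, producing $v\in\sobo^{1,1}$ with $\nabla v=E$. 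Fatou's lemma and the minimising property of $(u_{k})$ finish the existence proof, and your uniqueness argument via strict convexity is correct.
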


Let us comment on our theorem, its strategy of proof and related results from the literature. To the best of our knowledge, Theorem~\ref{thm:main} is \emph{the first $\sobo^{1,1}$-regularity result for a minimisation problem involving a linear growth condition on the integrand without requiring a quantified version of strong convexity}, even though the result applies only to the Neumann problem and not to the Dirichlet problem. In order to compare the outcome of Theorem~\ref{thm:main} with the available results, let us report on the relevant regularity results in the literature for the Dirichlet problem. This (again with a radially symmetric integrand) is just the variational problem
\begin{equation*}
\text{to minimise} \quad \int_{\Omega}f(|\nabla w|)\dif x \quad \text{over } w \in u_0 +\sobo^{1,1}_{0}(\Omega;\R^{N}).
\end{equation*}
subject to some prescribed boundary values $u_0 \in \sobo^{1,1}(\Omega;\R^N)$. Due to the lack of weak compactness of norm-bounded sequences in the space $u_0 + \sobo^{1,1}_0(\Omega;\R^N)$, one equally passes to the relaxed formulation and is thereby lead to the concept of $\bv$-minimisers. For the latter, its measure derivative may be non-trivial in the interior and, on the other hand, the prescribed boundary values might not be attained. The phenomenon of non-attainment of prescribed boundary values is well-known to occur already for minimal surfaces, while interior singularities can be ruled out in certain instances. In this regard, we briefly recall the notion of \emph{$\mu$-ellipticity} which quantifies the degeneration of second order derivatives of $z \mapsto f(|z|)$ and therefore represents an instrumental ingredient for deriving higher regularity for $\bv$-minimisers. We say that~$f$ is \emph{$\mu$-elliptic} for some $\mu \in (1,\infty)$ if
\begin{equation*}
\nu (1+|z|^{2})^{-\frac{\mu}{2}} |\zeta|^{2} \leq \D_{zz} f(|z|) [\zeta , \zeta]
\end{equation*}
holds for all $z,\zeta \in\R^{N \times n}$ (after possibly choosing the constant $\nu > 0$ from the growth condition~\eqref{eq:lingrowth} smaller). The impact of $\mu$-ellipticity on the regularity of generalised minimisers has been investigated to considerable detail by Bildhauer and Fuchs~\cite{BILFUC02b,BILDHAUER02,BILDHAUER03,BILDHAUERBUCH} (and by Fuchs and Mingione~\cite{FUCMIN00} for nearly linear growth problems). More specifically, under the mild degeneration condition $\mu \in (1,3)$, minimisers are in fact $\hold^{1}_{\locc}$-regular (see \cite[Theorem~2.7]{BILDHAUER02}, but also \cite[Theorem~B]{MARPAPI06} and \cite[Theorem~1.3]{BECSCH15}), while in the limit case with degeneration $\mu = 3$ (as for the area functional) the minimisers are still $\sobo^{1,1}$-regular (see \cite[Theorem~2.5]{BILDHAUER02} and \cite[Corollary~1.13]{BECSCH13}). The method of proof for these results consists in establishing uniform higher integrability of the gradients of suitable minimising sequences, which then is conserved in the passage to the limit. This seems to require the bound $\mu \leq 3$, and in fact, it is not known whether $\sobo^{1,1}$-regularity still holds or whether interior singularities might arise for $\mu > 3$. In this situation, however, one still has partial (H\"{o}lder) regularity results (cf.~\cite{ANZGIA88,SCHMIDT14,GMEKRI19} for some results in this direction), while  a counterexample of a minimiser in $\bv \setminus \sobo^{1,1}(\Omega)$ was constructed, so far, only for the non-autonomous case (see \cite[Theorem~4.39]{BILDHAUERBUCH}, building on a one-dimensional example from~\cite{GIAMODSOU79}).

In fact, the analysis of the Neumann problem is often omitted in the literature since the methods used for the Dirichlet problem can, as far as such interior estimates are concerned, be easily adapted also to our setting with the presence of~$T_0$. This is for example the case in the result of Temam~\cite{TEMAM71} (see also~\cite[Chapter~V.4]{EKETEM99}), where the existence of a (scalar-valued) $\sobo^{1,1}(\Omega)$-solution is shown for the Neumann problem, when dealing with functionals of linear growth and with degeneration not worse than for the minimal surface equation. However, let us emphasize that we here \emph{go beyond what is known for the Dirichlet problem} by showing that every $\bv$-minimiser belongs to $\sobo^{1,1}(\Omega;\R^{N})$ for \emph{all} strictly convex integrands regardless of any $\mu$-convexity assumption. In particular, the result holds for the prototypical integrands
\begin{equation*}
 f(t) \coloneqq \int_0^t (1+\tau^{\mu-1})^{-1/(\mu-1)} \tau \dif \tau
\end{equation*}
(satisfying the $\mu$-ellipticity condition) with any $\mu \in (1,\infty)$, but also for more general ones.

\begin{remark}
\label{rem_h_monotone}
In this context, let us note that under the assumptions of Theorem~\ref{thm:main} on the function~$f$, we can in general still ensure the existence of a continuous function $h\colon \R_{0}^{+}\to\R^{+}$ fulfilling $h>0$ a.e. in $\R_{0}^{+}$ such that
\begin{align}\label{eq:hmonotone}
h(|z|)|\zeta|^{2}\leq  \D_{zz} f(|z|) [\zeta , \zeta] \le L  \frac{|\zeta|^2}{1+|z|}
\end{align}
holds for all $z,\zeta \in\R^{N \times n}$ \textup{(}see Section~\ref{sec:hconvex} for a short proof\textup{)}. This notion of \emph{$h$-monotonicity} is a generalisation of the aforementioned $\mu$-ellipticity and reduces to that for the particular choice $h(t)\coloneqq (1+|t|)^{-\mu}$.
\end{remark}

We now comment briefly on the strategy of proof. In a first step and as it is usually done also for the Dirichlet problem (as for example in \cite{BILFUC02b,BILDHAUER02,BILDHAUER03,BECSCH15} mentioned above), we employ a classical vanishing viscosity approach. This yields specific minimising sequences satisfying good a~priori estimates. However, we then do \emph{not} use techniques designed to obtain higher integrability of the gradients of the solutions to these approximate problems. Instead, building on a strategy developed in~\cite{BECBULMALSUE17}, we prove that the relevant minimising sequences converge $\mathscr{L}^{n}$-a.e.~to an $\lebe^{1}$-map, which is then shown to be curl-free in the sense of distributions. It is only at this stage that we need the condition on $\Omega$ to be simply connected, which is sufficient to deduce that the aforementioned limit is actually the gradient of a $\sobo^{1,1}(\Omega;\R^{N})$-map~$u$. Now, by the pointwise convergence of the gradients, we finally obtain that this~$u$ is in fact a minimiser for the variational problem~\eqref{eq:varprin}. Unfortunately, this final step of the verification of the minimality property seems to fail for the Dirichlet problem. Here, the essential obstruction is that the boundary values of the minimising sequence are not controlled when only pointwise convergence of the gradients is available. Moreover, it would also be interesting to know whether the assumption on~$\Omega$ to be simply connected is mandatory in Theorem~\ref{thm:main}.

With the existence result of Theorem~\ref{thm:main} at hand, we can now return to our initial potential theoretic question of the regularity of~$A_{f}(\nabla u)$. Under the same assumptions as in Theorem~\ref{thm:main}, some regularity of~$T_0$ is inherited and we indeed obtain $A_{f}(\nabla u)\in\sobo_{\locc}^{1,2}(\Omega;\R^{N\times n})$, see Theorem~\ref{thm_regularity_dual}. We further note that in this situation the quantity $A_{f}(\nabla u)-T_{0}$ takes actually the role of the dual solution (in the sense of convex duality, cp.~Section~\ref{sec:duality}, and see~\cite{EKETEM99,FUCSER99} for related relevant contributions in the superlinear growth case), while in more general situations this Sobolev regularity for the dual solution still survives (even though it cannot necessarily be represented as $A_{f}(\nabla u)-T_{0}$ by the possible presence of the singular part in~$\D u$).

\subsection{Organisation of the Paper}

To conclude the introduction, we give a short outline of the paper. In Section~\ref{sec:prelims} we gather some preliminary results needed later on, in particular, we remind Chacon's biting lemma and state a suitable Sobolev-type version of the classical Poincar\'e lemma, which allows us to recover the gradient structure, whenever an $\lebe^1$-function is curl-free in the sense of distributions on a simply connected domain. In Section~\ref{sec:main} we then establish Theorem~\ref{thm:main} in several steps as already sketched in detail above. In Section~\ref{sec:relaxedanddual} we explain the relaxed primal problem, i.e., the extension of the functional~$\mathfrak{F}$ originally defined only on the space $\sobo^{1,1}(\Omega;\R^N)$ to the larger space $\bv(\Omega;\R^N)$ possessing better compactness properties, and the notion of generalised minimisers. Their existence is then proved, and this is in particular of interest in the case of non-simply connected domains, where we cannot ensure the existence of a $\sobo^{1,1}$-minimiser via Theorem~\ref{thm:main}. In this section we further discuss an alternative approach to the minimisation problem~\eqref{eq:varprin}, namely its dual problem in the sense of convex analysis. In particular, we here identify the correct setup and then link the dual formulation to the primal (relaxed) one in a precise manner. In Section~\ref{sec:appendix} we finally collect some supplementary material for the convenience of the reader.

\section{Preliminaries}\label{sec:prelims}

\subsection{General Notation}
Throughout the paper, $\Omega$ is a simply connected, bounded Lip\-schitz domain in $\R^{n}$. Given $x\in\R^{n}$ and $r>0$, we denote by $\ball(x,r)\coloneqq \{y\in\R^{n}\colon |x-y|<r\}$ the open ball with radius $r>0$ centered at~$x \in \R^n$. For the unit-sphere $\{ x \in \R^k \colon |x| = 1\}$ we further write $\mathbb{S}^{k-1}$. Given $a\in\R^{N}$ and $b\in\R^{n}$, we denote by $a\otimes b\coloneqq ab^{\mathsf{T}}\in\R^{N\times n}$ the tensor product of~$a$ and~$b$. Given a bounded set~$U$ in~$\R^{n}$, we denote by $\mathcal{M}(U;\R^{m})$ the $\R^{m}$-valued Radon measures on~$U$ of finite total variation and denote the space of all bounded continuous functions $U\to\R^{m}$ by $\hold_{b}(U;\R^{m})$. Finally, we denote by $\mu\res A$ the restriction of $\mu$ to a Borel set~$A$ of~$U$, i.e., $(\mu\res A)(V)\coloneqq \mu(A\cap V)$ for Borel sets $V\subset U$.

\subsection{On the gradient structure}

In this section we collect auxiliary estimates and background results that will be useful in the proof of our main result below, when identifying an $\lebe^1$-function with the gradient of a $\sobo^{1,1}$-function. We begin with recording the following version of Chacon's biting lemma:

\begin{lemma}[Chacon's biting lemma, \cite{BALMUR89}]
\label{lemma_biting}
Let $(E_{k})_{k \in \N}$ be a bounded sequence in $\lebe^{1}(\Omega;\R^{m})$. Then there exist a subsequence $(E_{k(\ell)})_{\ell \in \N}$ and a function $E \in \lebe^{1}(\Omega;\R^{m})$ such that $(E_{k(\ell)})_{\ell \in \N}$ converges weakly to~$E$ in the biting sense in $\lebe^{1}(\Omega;\R^{m})$, that is, there exists an increasing sequence $(\Omega_{j})_{j \in \N}$ of measurable sets contained in~$\Omega$ with $\mathscr{L}^{n}(\Omega\setminus\Omega_{j})\to 0$ such that
\begin{align*}
E_{k(\ell)}\rightharpoonup E\qquad\text{weakly in } \lebe^{1}(\Omega_{j};\R^{m})\;\text{as } \ell\to\infty
\end{align*}
for every fixed $j\in\mathbb{N}$.
\end{lemma}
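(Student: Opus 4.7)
The plan is to combine weak-$*$ compactness of Radon measures with the Dunford--Pettis theorem on ``good'' subsets, the latter being identified via the Radon--Nikodym decomposition of the limit measure. This is the standard scheme of Brooks--Chacon as refined by Ball--Murat~\cite{BALMUR89}.

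Let $M \coloneqq \sup_{k \in \N} \|E_k\|_{\lebe^{1}(\Omega;\R^m)}$. By weak-$*$ compactness of bounded Radon measures, after passing to a subsequence (not relabelled) the scalar measures $|E_k|\mathscr{L}^n$ converge weakly-$*$ to some finite nonnegative Radon measure $\mu$ on $\overline{\Omega}$. I would decompose $\mu\res\Omega = \mu_{\mathrm{ac}} + \mu_{\mathrm{s}}$ relative to $\mathscr{L}^n$: since $\mu_{\mathrm{s}}\perp\mathscr{L}^n$, there is a Borel set $N\subset\Omega$ with $\mathscr{L}^n(N)=0$ and $\mu_{\mathrm{s}}(\Omega\setminus N) = 0$. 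Outer regularity of $\mathscr{L}^n$ supplies open sets $V_j\subset\Omega$ containing $N$ with $\mathscr{L}^n(V_j) < 1/j$, and upon replacing $V_j$ by $\bigcap_{i\leq j} V_i$ the family may be arranged to be decreasing. Setting $\Omega_j \coloneqq \Omega \setminus V_j$ produces an increasing sequence of measurable subsets of~$\Omega$ with $\mathscr{L}^n(\Omega\setminus\Omega_j) \to 0$, each disjoint from the concentration locus of~$\mu_{\mathrm{s}}$.

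For each fixed $j$, the next step is to upgrade the weak-$*$ measure convergence to weak $\lebe^{1}(\Omega_j;\R^m)$ convergence via equi-integrability. For closed $A\subset\Omega_j$, weak-$*$ upper semicontinuity yields $\limsup_{k}\int_A |E_k|\dif x \leq \mu(A) = \mu_{\mathrm{ac}}(A)$, where the last equality uses $\mu_{\mathrm{s}}(\Omega_j) = 0$. Combined with the absolute continuity $\mu_{\mathrm{ac}}\ll\mathscr{L}^n$ and the uniform $\lebe^{1}$-bound~$M$, this supplies the $\delta$--$\varepsilon$ criterion of Dunford--Pettis for the restricted family $(E_k\mathbbm{1}_{\Omega_j})_{k\in\N}$. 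Consequently, for each $j$ a further subsequence can be extracted along which $(E_k\mathbbm{1}_{\Omega_j})$ converges weakly in $\lebe^{1}(\Omega_j;\R^m)$ to some limit $E^{(j)}$, and a Cantor diagonal extraction merges these into a single subsequence $(k(\ell))_{\ell\in\N}$ serving all $j$ simultaneously. The consistency $E^{(j)} = E^{(j')}\res\Omega_j$ for $j\leq j'$ follows from uniqueness of weak limits on the smaller set, so the $E^{(j)}$ patch together to a function $E$ on $\bigcup_j \Omega_j$; extending by zero on the null set $\Omega\setminus\bigcup_j\Omega_j$, the uniform bound $\|E^{(j)}\|_{\lebe^{1}(\Omega_j;\R^m)} \leq M$ together with monotone convergence places $E$ in $\lebe^{1}(\Omega;\R^m)$.

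The main technical hurdle is precisely the verification of equi-integrability on each $\Omega_j$; the crucial input is that, by construction of the $V_j$, the singular part $\mu_{\mathrm{s}}$ has been entirely absorbed into the excluded set, so that any residual concentration of $|E_k|$ in $\Omega_j$ is controlled by the absolutely continuous measure $\mu_{\mathrm{ac}}$, after which the Dunford--Pettis criterion takes over.
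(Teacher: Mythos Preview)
The paper does not prove this lemma; it is simply quoted from~\cite{BALMUR89} as a known result, so there is no ``paper's own proof'' to compare against.

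Your proposed argument, however, has a genuine gap at the decisive step. The inference ``$\limsup_k \int_A |E_k|\,\dif x \le \mu_{\mathrm{ac}}(A)$ for every closed $A\subset\Omega_j$, together with $\mu_{\mathrm{ac}}\ll\mathscr L^n$ and the uniform $\lebe^{1}$-bound, supplies the Dunford--Pettis criterion'' is false: a $\limsup$ bound for each \emph{fixed} test set is strictly weaker than the uniform-in-$k$, uniform-in-$A$ smallness that equi-integrability requires. Concretely, take $\Omega=(0,1)$, $m=1$, and
\[
E_k \;\coloneqq\; k\,\mathbbm{1}_{A_k},\qquad A_k\coloneqq\bigcup_{i=0}^{k-1}\Big[\tfrac{i}{k},\,\tfrac{i}{k}+\tfrac{1}{k^{2}}\Big).
\]
Then $\|E_k\|_{\lebe^{1}(\Omega)}=1$ for all $k$, and a Riemann-sum computation shows $|E_k|\,\mathscr L^{1} \rightharpoonup^{*} \mathscr L^{1}\res(0,1)$, so $\mu$ is purely absolutely continuous, $\mu_{\mathrm s}=0$, $N=\varnothing$, and your construction produces $\Omega_j=\Omega$ for every~$j$. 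Yet $(E_k)$ is \emph{not} equi-integrable on $\Omega$: $\mathscr L^{1}(A_k)=1/k\to 0$ while $\int_{A_k}|E_k|\,\dif x=1$. Hence no subsequence of $(E_k)$ can converge weakly in $\lebe^{1}(\Omega)$ along your sets $\Omega_j$, and the scheme collapses. (Your Portmanteau estimate is perfectly correct here---for compact $A\subset(0,1)$ one indeed has $\limsup_k\int_A|E_k|\,\dif x = \mathscr L^{1}(A)$---it simply does not imply what you need.)

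The underlying phenomenon is that the singular part of the weak-$*$ limit detects only concentration at \emph{fixed} locations; it is blind to concentration on $k$-dependent small sets that equidistribute as $k\to\infty$. The biting sets $\Omega_j$ must therefore be built from the individual $E_k$ (for instance via their superlevel sets and a diagonal or Borel--Cantelli extraction), not from the Radon--Nikod\'ym decomposition of the limit measure alone. This is what the actual argument in~\cite{BALMUR89} does.
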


We shall apply Chacon's biting lemma to the gradients of a minimising sequence of the functional~$\mathfrak{F}$ in $\sobo^{1,1}(\Omega;\R^{N})$, hence, to gradients of functions in $\sobo^{1,1}(\Omega;\R^{N})$. In order to deduce a gradient structure of the limit, we will show in the first step, that the limit is curl-free in the sense of distributions, according to the following

\begin{definition}
\label{def_curl_free}
We call a function $e \in \hold^{1}(\Omega;\R^n)$ \emph{curl-free} if for all $i, j \in \{1,\ldots,n\}$ there holds
\begin{equation*}
 \partial_j e_i - \partial_i e_j = 0.
\end{equation*}
Similarly, we call a function $e \in \lebe^{1}(\Omega;\R^{n})$ \emph{curl-free in the sense of distributions} if for any $\varphi \in \hold^{1}_0(\Omega)$ and all $i, j \in \{1,\ldots,n\}$ there holds
\begin{equation*}
 \int_\Omega (e \otimes \nabla)_{ij} \varphi \dif x \coloneqq  \int_\Omega \big( e_i \partial_j \varphi - e_j \partial_i \varphi) \dif x = 0.
\end{equation*}
\end{definition}

\begin{remark}
\label{rem_curl}
\strut
\begin{enumerate}[font=\normalfont, label=(\roman{*}), ref=(\roman{*})]
 \item In order to verify the curl-free condition, one only needs to check the condition for all indices $i<j$, hence, we have $n(n-1)/2$ conditions in total. In particular, for $n=2$, the curl is defined as a scalar function, while for $n=3$ as a $3$-dimensional vectorial function.
 \item If $e \in \lebe^{p}(\Omega;\R^{n})$ for some $p \in [1,\infty]$, then we can take by approximation test functions $\varphi \in \sobo^{1,q}_0(\Omega)$ for $q \in [1,\infty]$ such that $1/p + 1/q=1$. In this case, we find
 \begin{equation*}
   \int_\Omega (e \otimes \nabla) \varphi \dif x \leq C(n) \|e\|_{\lebe^{p}(\Omega;\R^{n})} \|\nabla \varphi \|_{\lebe^{q}(\Omega)}.
 \end{equation*}
\end{enumerate}
\end{remark}

If $e = \nabla w$ for some function $w \in \sobo^{2,1}(\Omega)$, then~$e$ is obviously curl-free via the integration by parts formula. However, the gradient structure is not only sufficient, but indeed necessary for the curl-free condition if~$\Omega$ is a simply connected domain. The precise statement of this Sobolev-type version of the usual Poincar\'{e} Lemma is as follows:

\begin{lemma}\label{lem:Poinclemma}
Let $\Omega\subset\R^{n}$ be a simply connected bounded Lipschitz domain. If a function $E \in\lebe^{1}(\Omega;\R^{N\times n})$ is curl-free in the sense of distributions on $\Omega$, then there exists a function $v\in\sobo^{1,1}(\Omega;\R^{N})$ such that $\nabla v = E$ holds $\mathscr{L}^{n}$-a.e.~in~$\Omega$.
\end{lemma}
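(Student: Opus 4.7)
The plan is to reduce to the scalar case $N=1$ and then combine mollification with the classical Poincar\'e lemma on simply connected smooth subdomains. Since the distributional curl-free condition in Definition~\ref{def_curl_free} acts independently on the $N$ rows $E^{i}\in\lebe^{1}(\Omega;\R^{n})$ of~$E$, it suffices to construct a scalar potential $v^{i}\in\sobo^{1,1}(\Omega)$ for each row and assemble them into $v=(v^{1},\ldots,v^{N})^{\mathsf{T}}$; henceforth I assume $N=1$.

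Let $\rho_{\varepsilon}$ be a standard smooth mollifier and set $E_{\varepsilon}\coloneqq E\ast\rho_{\varepsilon}$ on $\Omega_{\varepsilon}\coloneqq \{x\in\Omega\colon\dista(x,\partial\Omega)>\varepsilon\}$; testing the distributional curl-free condition against $\rho_{\varepsilon}(x-\cdot)$ shows that $E_{\varepsilon}\in\hold^{\infty}(\Omega_{\varepsilon};\R^{n})$ is classically curl-free. I would then invoke the topological fact that a bounded simply connected Lipschitz domain admits an exhaustion by simply connected smooth bounded open sets $U_{1}\Subset U_{2}\Subset\cdots\Subset\Omega$ with $\bigcup_{j}U_{j}=\Omega$. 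On each such $U_{j}$ and for every $\varepsilon<\dista(U_{j},\partial\Omega)$, the classical Poincar\'e lemma furnishes $v_{\varepsilon}^{(j)}\in\hold^{\infty}(\overline{U_{j}})$ with $\nabla v_{\varepsilon}^{(j)}=E_{\varepsilon}$, which I would normalise by imposing $(v_{\varepsilon}^{(j)})_{U_{1}}=0$.

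Since $E_{\varepsilon}\to E$ strongly in $\lebe^{1}(U_{j})$ and the difference $v_{\varepsilon}^{(j)}-v_{\varepsilon'}^{(j)}$ has vanishing mean on $U_{1}\subset U_{j}$, the Poincar\'e inequality on $U_{j}$ relative to the reference mean on~$U_{1}$ yields
\begin{equation*}
\|v_{\varepsilon}^{(j)}-v_{\varepsilon'}^{(j)}\|_{\sobo^{1,1}(U_{j})}\leq C(U_{j},U_{1})\,\|E_{\varepsilon}-E_{\varepsilon'}\|_{\lebe^{1}(U_{j})},
\end{equation*}
so the sequence is Cauchy in $\sobo^{1,1}(U_{j})$ and converges to a limit $v^{(j)}\in\sobo^{1,1}(U_{j})$ satisfying $\nabla v^{(j)}=E$ and $(v^{(j)})_{U_{1}}=0$. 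The difference $v^{(j+1)}-v^{(j)}$ has vanishing gradient on the connected set~$U_{j}$ and vanishing mean on~$U_{1}$, hence is zero there, so the $v^{(j)}$ glue to a unique $v\in\sobo_{\locc}^{1,1}(\Omega)$ with $\nabla v=E$ and $(v)_{U_{1}}=0$.

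To upgrade $v$ to a function in $\sobo^{1,1}(\Omega)$, I would truncate: $v_{M}\coloneqq \max(-M,\min(v,M))$ belongs to $\sobo^{1,1}(\Omega)\cap\lebe^{\infty}(\Omega)$ with $\|\nabla v_{M}\|_{\lebe^{1}(\Omega)}\leq\|E\|_{\lebe^{1}(\Omega)}$, and the $\sobo^{1,1}$-Poincar\'e inequality on the Lipschitz domain~$\Omega$ with reference mean on~$U_{1}$ gives
\begin{equation*}
\|v_{M}-(v_{M})_{U_{1}}\|_{\lebe^{1}(\Omega)}\leq C(\Omega,U_{1})\,\|E\|_{\lebe^{1}(\Omega)}.
\end{equation*}
Since $(v_{M})_{U_{1}}\to(v)_{U_{1}}=0$ as $M\to\infty$ by dominated convergence on the bounded set~$U_{1}$, Fatou's lemma then yields $v\in\lebe^{1}(\Omega)$ and hence $v\in\sobo^{1,1}(\Omega)$. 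The main obstacle of this strategy is the topological input, namely producing a simply connected smooth exhaustion of a general simply connected Lipschitz domain; an alternative that circumvents this issue is to cover $\Omega$ by finitely many open pieces diffeomorphic to star-shaped domains, construct local potentials on each piece via the explicit radial formula valid on star-shaped sets, and adjust them on overlaps using that the first real \v{C}ech cohomology of~$\Omega$ vanishes under simple connectedness.
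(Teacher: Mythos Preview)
Your argument is correct and follows the same overall strategy as the paper: mollify, apply the classical Poincar\'e lemma on simply connected compactly contained subdomains, pass to the limit using the Poincar\'e inequality to obtain a local potential, and then globalise. The paper makes two slightly different technical choices. First, instead of an abstract exhaustion $U_{1}\Subset U_{2}\Subset\cdots$ it uses the concrete sets $\Omega_{\delta}=\{x\in\Omega:\dista(x,\partial\Omega)>\delta\}$ and simply asserts that these are simply connected Lipschitz domains for all sufficiently small~$\delta$; this is precisely the topological input you flag as the main obstacle, and the paper does not justify it either. Second, for the passage from $\sobo^{1,1}_{\locc}$ to $\sobo^{1,1}(\Omega)$ the paper extends each $v_{\delta}$ from $\Omega_{\delta}$ to all of~$\Omega$ via a Sobolev extension operator and shows that the resulting family is Cauchy in $\sobo^{1,1}(\Omega)$, whereas you truncate $v$ to $v_{M}$, apply the Poincar\'e inequality on~$\Omega$ uniformly in~$M$, and conclude $v\in\lebe^{1}(\Omega)$ by Fatou. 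Your truncation argument is slightly more elementary in that it avoids the extension operator, while the paper's extension approach has the minor advantage of producing the global $\sobo^{1,1}$-function directly as a limit rather than first constructing a $\sobo^{1,1}_{\locc}$-object. Your \v{C}ech-cohomology alternative for patching local potentials on a star-shaped cover is a genuine way around the exhaustion issue that the paper does not pursue.
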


\begin{proof}
We first note that the statement is clear if $E\in\hold^1(\Omega;\R^{N\times n})$ is curl-free in the classical sense. Indeed, in this case, we associate to~$E$ the $1$-forms $\omega^\alpha \coloneqq  E^\alpha_1 \dif x_1 + \ldots + E^\alpha_n \dif x_n$, for $\alpha \in \{1,\ldots,N\}$, and we observe that the curl-free condition simply means that each $\omega^\alpha$ is closed. By means of the classical Poincar\'{e} lemma, see e.g.~\cite{SPIVAK79}, it is therefore exact, i.e., we find $0$-forms $v^\alpha$ with $\omega^\alpha = \dif v^\alpha$, for each $\alpha \in \{1,\ldots,N\}$, which precisely means $\nabla v = E$ in~$\Omega$.

The assertion of the lemma now follows by approximation. To this end, let $K\Subset\Omega$ be a simply connected open set. Given $0<\varepsilon<\tfrac{1}{2}\dista(K,\partial\Omega)$, the mollifications $E_{\varepsilon}\colon K +\ball(0,\varepsilon) \to \R^{N\times n}$, defined by convolution $E_{\varepsilon}\coloneqq \rho_{\varepsilon} * E$ with a standard mollifying kernel~$\rho_\varepsilon(x) \coloneqq \varepsilon^{-n} \rho(x/\varepsilon)$ for some non-negative, rotationally symmetric function~$\rho \in \hold_{c}^{\infty}(\ball(0,1))$ with $\| \rho\|_{L^1(\ball(0,1))}=1$, are well-defined and smooth. Furthermore, for every test function $\psi\in\hold_{c}^{\infty}(K;\R^{N\times n})$ we get via Fubini's theorem the relation
\begin{equation*}
\int_\Omega (\rho_\varepsilon * E^\alpha_i) \partial_j \psi \dif x = \int_\Omega E^\alpha_i  \partial_j (\rho_\varepsilon *  \psi) \dif x
\end{equation*}
for all $i,j \in \{1,\ldots,n\}$ and $\alpha \in \{ 1,\ldots,N\}$. As a consequence, $E_{\varepsilon}$ is curl-free in the sense of distributions on~$K$, and thus, by the fundamental theorem of calculus, also in the classical sense. Therefore, by the classical Poincar\'{e} lemma  mentioned above, we find a function $v_{\varepsilon}\in\hold^{1}(K;\R^{N})$ with $\nabla v_{\varepsilon}=E_{\varepsilon}$ on~$K$, and we may also suppose $(v_\varepsilon)_K=0$. With the strong convergence $E_{\varepsilon}\to E$ in $\lebe^{1}(K;\R^{N\times n})$ as $\varepsilon\searrow 0$ by the usual properties of mollifications and with the Poincar\'{e inequality}, we see that $(v_{\varepsilon})_{\varepsilon}$ is a Cauchy sequence in $\sobo^{1,1}(K;\R^{N})$ and hence converges strongly in $\sobo^{1,1}(K;\R^{N})$ to a limit $v_K \in\sobo^{1,1}(K;\R^{N})$. In order to identify $\nabla v_K = E$ a.e.~on~$K$ we calculate, for arbitrary $\varphi\in\hold_{c}^{\infty}(K;\R^{N\times n})$,
\begin{align*}
\left\vert\int_{\Omega} (\nabla v_K - E) \cdot \varphi \dif x\right\vert & = \lim_{\varepsilon\searrow 0}\left\vert\int_{\Omega} (\nabla v_{\varepsilon}-E) \cdot \varphi \dif x \right\vert \\ & \leq \|\varphi\|_{\lebe^{\infty}(K;\R^{N\times n})}\lim_{\varepsilon \searrow 0}\|E_{\varepsilon}-E\|_{\lebe^{1}(K;\R^{N\times n})}=0.
\end{align*}
It only remains to justify that we find a function $v \in \sobo^{1,1}(\Omega;\R^{N\times n})$ such that $\nabla v = E$ holds $\mathscr{L}^{n}$-a.e.~on all of~$\Omega$. To this end, we notice that the sets $\Omega_{\delta}\coloneqq \{x \in \Omega \colon\dista(x,\partial\Omega)>\delta\}$ are simply connected Lipschitz domains provided that $\delta \in (0,\delta_0)$ for some sufficiently small~$\delta_0 >0$, with $\Omega_{\delta}\nearrow\Omega$ as $\delta \searrow 0$. Furthermore, we fix $\delta_1 \leq \delta_0$ such that $2 \mathscr{L}^{n}(\Omega_{\delta_1}) \geq \mathscr{L}^{n}(\Omega)$. With the previous arguments we then find, for every $\delta \leq \delta_1$, a function $v_\delta \in \sobo^{1,1}(\Omega;\R^{N})$ (extended via the extension operator in $\Omega \setminus \Omega_\delta$) such that $\nabla v_\delta = E$ holds a.e.~in~$\Omega_\delta$, and we may further suppose $(v_\delta)_{\Omega_{\delta_1}}=0$. It is easy to see that $(v_\delta)_{\delta \in (0,\delta_1)}$ is a Cauchy family in $\sobo^{1,1}(\Omega;\R^{N})$, with a limit function $v \in \sobo^{1,1}(\Omega;\R^{N})$. Arguing via the pointwise convergence of $(\nabla v_\delta)_{\delta \in (0,\delta_1)}$ for a subsequence (or, alternatively, via the fundamental theorem of calculus as before) we finally end up with the fact that $\nabla v = E$ holds $\mathscr{L}^{n}$-a.e.~in~$\Omega$, which completes the proof.
\end{proof}

\section{Proof of the Main Theorem}\label{sec:main}

\subsection{Existence of solutions for approximate problems}\label{sec:approximation}
Aiming for the existence of a weak solution of the system~\eqref{eq:PDE} subject to the Neumann-type boundary constraint~\eqref{eq:neumann}, or equivalently of a minimiser for the variational principle~\eqref{eq:varprin}, in the class $\sobo^{1,1}(\Omega;\R^N)$, we start to investigate in this section boundedness and convergence properties of a suitable approximating sequence. This sequence, in turn, is obtained by means of a vanishing viscosity-type approach, meaning that on the level of the elliptic system~\eqref{eq:PDE} we add a Laplacean to the differential operator, or on the level of the functional we add the Dirichlet energy (both with small prefactor) to the functional~$\mathfrak{F}$. As a consequence, we can work in these approximations with solutions of class $\sobo^{1,2}(\Omega;\R^N)$. It is easy to see that all arguments which are outlined in this section for the functional~$\mathfrak{F}$ with radially symmetric integrands~$f$ do in fact also apply to more general functionals (as described in~\eqref{eq:varprin_general}
later on) without the radial structure. However, it is in the subsequent sections when we need to rely on the Uhlenbeck structure of the integrands~$f$, in order to obtain the $\sobo^{1,1}$-regularity as claimed in Theorem~\ref{thm:main}.

Let us now introduce, in an intermediate step, the approximate functionals
\begin{align}
\label{eq:approx1}
\begin{split} \mathfrak{F}_{k}[w]\coloneqq \mathfrak{F}[w]+ (2k)^{-1} \int_{\Omega} |\nabla w|^{2} \dif x \coloneqq \int_{\Omega}f_{k}(|\nabla w|)\dif x -\int_{\Omega} T_{0} \cdot \nabla w \dif x
\end{split}
\end{align}
for functions $w \in \sobo^{1,2}(\Omega;\R^{N})$ and all $k \in \N$, where we have set $f_{k}(t)\coloneqq f(t)+ (2k)^{-1} t^2$ for $t \in \R^+_0$. In the first step we establish the existence of a sequence of functions $(u_k)_{k \in \N}$ in $\sobo^{1,2}(\Omega;\R^{N})$ such that, for each $k \in \N$, the function~$u_k$ has vanishing mean value $(u_{k})_{\Omega}=0$ on~$\Omega$ and minimises the functional~$\mathfrak{F}_{k}$ among all functions in $\sobo^{1,2}(\Omega;\R^{N})$.

\begin{lemma}
\label{lemma_existence_uk}
Consider a convex function $f\in\hold^{1}(\R_{0}^{+})$ satisfying $f(0) = f'(0)= 0$ and the linear growth condition~\eqref{eq:lingrowth}, and let $T_{0}\in \lebe^{\infty}(\Omega;\R^{N\times n})$ verify~\eqref{eq:coercivcond}. Then, for every $k \in \N$, the functional $\mathfrak{F}_{k}$ defined in~\eqref{eq:approx1} admits a \textup{(}unique\textup{)} minimiser $u_{k} \in \sobo^{1,2}(\Omega;\R^{N})$ satisfying $(u_{k})_{\Omega}=0$ and
\begin{equation}
\label{eq:norm_bound_u_k}
 \|\nabla u_k \|_{\lebe^{1}(\Omega;\R^{N \times n})} + k^{-1} \|\nabla u_{k} \|_{\lebe^{2}(\Omega;\R^{N \times n})}^2 \leq C \big(1 + \mathfrak{F}_{k}[u_{k}] \big)
\end{equation}
for a constant~$C$ depending only on $\Omega$,~$f$ and~$\|T_{0}\|_{\lebe^{\infty}(\Omega;\R^{N \times n})}$.
\end{lemma}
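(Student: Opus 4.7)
The plan is to apply the direct method of the calculus of variations on the affine subspace
\[
 X \coloneqq \big\{ w \in \sobo^{1,2}(\Omega;\R^{N}) \colon (w)_{\Omega} = 0 \big\},
\]
exploiting that the quadratic perturbation $(2k)^{-1}|z|^2$ turns the integrand into a strictly convex and superlinear one, while using the coerciveness condition~\eqref{eq:coercivcond} to obtain the $k$-independent piece of the bound~\eqref{eq:norm_bound_u_k}.

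First, I would establish coercivity. The computation already carried out in Section~\ref{sec:coerciveness} yields, for every $w \in \sobo^{1,1}(\Omega;\R^{N})$,
\[
 \mathfrak{F}[w] \geq \tfrac{1}{2}\bigl(f^{\infty}(1) - \|T_{0}\|_{\lebe^{\infty}(\Omega;\R^{N \times n})}\bigr)\|\nabla w\|_{\lebe^{1}(\Omega;\R^{N\times n})} - f^{\infty}(1)|\Omega| R_{0},
\]
with $R_{0}$ depending only on $f$ and $\|T_{0}\|_{\lebe^{\infty}}$. Adding the non-negative term $(2k)^{-1}\|\nabla w\|_{\lebe^{2}}^{2}$ and recalling that $\tfrac{1}{2}(f^{\infty}(1)-\|T_{0}\|_{\lebe^{\infty}}) > 0$ by~\eqref{eq:coercivcond}, this immediately gives the desired lower bound
\[
 \tfrac{1}{2}\bigl(f^{\infty}(1)-\|T_{0}\|_{\lebe^{\infty}(\Omega;\R^{N\times n})}\bigr)\|\nabla w\|_{\lebe^{1}(\Omega;\R^{N\times n})} + (2k)^{-1}\|\nabla w\|_{\lebe^{2}(\Omega;\R^{N\times n})}^{2} \leq \mathfrak{F}_{k}[w] + f^{\infty}(1)|\Omega|R_{0}.
\]
Observe also that $\mathfrak{F}_{k}[0] = 0$, so $\inf_{X}\mathfrak{F}_{k} \leq 0$, and therefore every minimising sequence of $\mathfrak{F}_{k}$ in $X$ is bounded in $\lebe^{2}(\Omega;\R^{N\times n})$. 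Combined with the Poincar\'{e}--Wirtinger inequality on~$X$, this yields boundedness of any minimising sequence in $\sobo^{1,2}(\Omega;\R^{N})$.

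Second, I would implement the direct method. Pick a minimising sequence $(w_{j})_{j \in \N} \subset X$; by the above bound, along a subsequence we may assume $w_{j} \rightharpoonup u_{k}$ weakly in $\sobo^{1,2}(\Omega;\R^{N})$, with $u_{k} \in X$ since the mean value functional is weakly continuous. The functional $\mathfrak{F}_{k}$ is weakly sequentially lower semicontinuous in $\sobo^{1,2}(\Omega;\R^{N})$ because $z \mapsto f_{k}(|z|)$ is convex and continuous (as a composition of the convex increasing function~$f_{k}$ with the norm), while the linear contribution $w \mapsto \int_{\Omega} T_{0} \cdot \nabla w \dif x$ is weakly continuous on $\sobo^{1,2}(\Omega;\R^{N})$. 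Hence $u_{k}$ attains the infimum.

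Third, for uniqueness I would note that $z \mapsto f_{k}(|z|) = f(|z|) + (2k)^{-1}|z|^{2}$ is strictly convex on $\R^{N\times n}$, since $f(|\cdot|)$ is convex and $(2k)^{-1}|\cdot|^{2}$ is strictly convex; combined with the constraint $(u)_{\Omega}=0$ fixing additive constants, this rules out two distinct minimisers via the standard midpoint argument. Finally, the asserted estimate~\eqref{eq:norm_bound_u_k} is read off directly from the coercivity inequality above, since $\mathfrak{F}_{k}[u_{k}] \leq 0$ could even be used to drop the~$1$ on the right-hand side; writing it as $C(1+\mathfrak{F}_{k}[u_{k}])$ only absorbs the constant $f^{\infty}(1)|\Omega|R_{0}$. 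No real obstacle arises here; the only subtle ingredient is the use of~\eqref{eq:coercivcond}, which is essential to ensure that the $\lebe^{1}$-bound in~\eqref{eq:norm_bound_u_k} is independent of~$k$, so that the estimate can later be passed through the vanishing viscosity limit $k \to \infty$.
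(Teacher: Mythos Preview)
Your proof is correct and follows essentially the same approach as the paper's own proof: both apply the direct method on the subspace of $\sobo^{1,2}$-functions with vanishing mean, invoke the coercivity computation from Section~\ref{sec:coerciveness} to bound minimising sequences, pass to a weak limit via reflexivity, and conclude minimality by weak lower semicontinuity of the convex functional and uniqueness by strict convexity of $f_{k}(|\cdot|)$. Your version is slightly more explicit in a few places (e.g.\ the observation $\mathfrak{F}_{k}[0]=0$ and the remark that the $k$-independence of the $\lebe^{1}$-bound is the point of~\eqref{eq:coercivcond}), but there is no substantive difference in strategy.
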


\begin{proof}
The existence of the minimiser~$u_k$ is a consequence of the direct method of the calculus of variations, for each fixed $k \in \N$. In fact, due to assumption~\eqref{eq:coercivcond} on~$T_0$ (implying coerciveness, cp.~Section~\ref{sec:coerciveness}), the functional~$\mathfrak{F}$ and thus also each of the functionals~$\mathfrak{F}_{k}$ is bounded from below via
\begin{equation*}
 \gamma  \|\nabla w\|_{\lebe^{1}(\Omega;\R^{N \times n})} +  (2k)^{-1}  \|\nabla w\|_{\lebe^{2}(\Omega;\R^{N \times n})}^2 \leq \mathfrak{F}_{k}[w] + C \mathscr{L}^{n}(\Omega)
\end{equation*}
for all functions $w \in\sobo^{1,2}(\Omega;\R^{N})$ and $k \in \N$, with constants~$\gamma$ and~$C$ depending only on~$f$ and~$\|T_{0}\|_{\lebe^{\infty}(\Omega;\R^{N \times n})}$. As a consequence, via Poincar\'{e}'s inequality in the zero-mean version, we find that every minimising sequence $(w_{k,\ell})_{\ell \in \N}$ of~$\mathfrak{F}_{k}$ in the set $\mathcal{C} \coloneqq  \{w\in\sobo^{1,2}(\Omega;\R^{N}) \colon (w)_{\Omega}=0\}$, i.e.,~which satisfies $\mathfrak{F}_{k}(w_{k,\ell})\to \inf_{\mathcal{C}}\mathfrak{F}_{k}$ as $\ell \to\infty$, is bounded in $\sobo^{1,2}(\Omega;\R^{N})$. Since the latter space is reflexive, the classical Banach--Alaoglu Theorem gives a non-relabeled subsequence and a limit map $u_k \in \sobo^{1,2}(\Omega;\R^{N})$ such that $w_{k,\ell}\rightharpoonup u_k$ as $\ell\to\infty$, $(u_{k})_{\Omega}=0$ and the estimate~\eqref{eq:norm_bound_u_k} are satisfied. Now, since by convexity of its integrand the functional~$\mathfrak{F}_{k}$ is lower semi-continuous with respect to weak
convergence in $\sobo^{1,2}(\Omega;\R^{N})$, cp. \cite[Theorem 3.23]{DACOROGNA08}, we obtain $\mathfrak{F}_{k}[u_k]\leq \liminf_{\ell\to\infty} \mathfrak{F}_{k}[w_{k,\ell}]$ for each $k \in \N$. Thus, taking advantage of the strict convexity of the integrand of~$\mathfrak{F}_{k}$, we have shown that~$u_{k}$ is indeed the unique minimiser of $\mathfrak{F}_{k}$ in $\mathcal{C}$, and the proof of the lemma is complete.
\end{proof}

Once the existence of minimisers is ensured, we note that every minimiser $u_k \in \sobo^{1,2}(\Omega;\R^{N})$ of the functional~$\mathfrak{F}_{k}$ in $\sobo^{1,2}(\Omega;\R^{N})$ also satisfies the Euler--Lagrange system
\begin{align}\label{eq:approxel}
 \int_{\Omega} \A_{k}(\sg u_{k}) \cdot \sg \varphi \dif x = \int_{\Omega} T_{0} \cdot \sg \varphi \dif x
\end{align}
for all functions $\varphi \in W^{1,2}(\Omega;\R^{N})$, where the regularised tensor functions $\A_{k} \colon \R^{N\times n}\to\R^{N\times n}$, for $k \in \N$, are given by
\begin{align}\label{eq:Aregularised}
\A_{k}(z)\coloneqq \A(z)+ k^{-1} z \coloneqq f'(|z|) \frac{z}{|z|} + k^{-1} z ,\qquad \text{for all } z\in\R^{N\times n}.
\end{align}
Indeed,~\eqref{eq:approxel} is a simple consequence of the facts that the function $u_k + t\varphi \in \sobo^{1,2}(\Omega;\R^{N})$ is an admissible competitor for each $t \in \R$ and that $t \mapsto \mathfrak{F}_{k}[u_k + t\varphi]$ attains its minimum for $t=0$ (cp.~also Lemma~\ref{lemma:weaksolution_variational}). Let us further recall that, as a consequence of the convexity of~$f$ with $f(0) = f'(0)= 0$, the linear growth condition~\eqref{eq:lingrowth} and the upper bound~\eqref{eq:sec_der_bound} of~$f''$, we can work with the growth conditions
\begin{equation}
\label{eq:growth_D_z_A}
 h(z)|\zeta|^{2}\leq \D_z \A(z) [\zeta , \zeta] = \D_{zz} f(|z|) [\zeta , \zeta] \leq 2L \frac{|\zeta|^{2}}{1+|z|}
\end{equation}
for all $z,\zeta \in \R^{N \times n}$, where~$h$ is the function introduced in Remark~\ref{rem_h_monotone}.

Similarly as in \cite[Lemmata~3.2 and~3.3]{BECSCH15}, we next show that the functional~$\mathfrak{F}_{k}$ is indeed an approximation of the original functional~$\mathfrak{F}$ with respect to minimisation in $\sobo^{1,1}(\Omega;\R^{N})$, in the sense that the minimisers~$u_{k}$ of~$\mathfrak{F}_{k}$ form a minimising sequence for~$\mathfrak{F}$ in $\sobo^{1,1}(\Omega;\R^{N})$. Moreover, we infer a first uniform bound for the sequence $(u_{k})_{k \in \N}$.

\begin{corollary}
\label{cor_uniform_bound_uk}
Consider a convex function $f\in\hold^{1}(\R_{0}^{+})$ satisfying $f(0) = f'(0)= 0$ and the linear growth condition~\eqref{eq:lingrowth}, and let $T_{0}\in \lebe^{\infty}(\Omega;\R^{N\times n})$ verify~\eqref{eq:coercivcond}. Then the sequence $(u_{k})_{k \in \N}$ of minimisers~$u_k$ of the functionals~$\mathfrak{F}_{k}$ from Lemma~\eqref{lemma_existence_uk} is a minimising sequence for~$\mathfrak{F}$ in $\sobo^{1,1}(\Omega;\R^{N})$ with
\begin{equation*}
  \lim_{k \to \infty} \inf_{\sobo^{1,2}(\Omega;\R^{N})} \mathfrak{F}_{k} =  \inf_{\sobo^{1,1}(\Omega;\R^{N})} \mathfrak{F}.
\end{equation*}
Moreover, we have $k^{-1/2} \nabla u_{k} \to 0$ in $\lebe^{2}(\Omega;\R^{N \times n})$ and there holds
\begin{equation}
\label{eq:unifbound1}
\sup_{k \in \N} \big\{ \| u_k \|_{\sobo^{1,1}(\Omega;\R^{N})} + k^{-1} \| u_{k} \|_{\sobo^{1,2}(\Omega;\R^{N})}^2  \big\} < \infty.
\end{equation}
\end{corollary}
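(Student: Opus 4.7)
The plan is to sandwich the values $\mathfrak{F}_{k}[u_{k}]$ between $\inf_{\sobo^{1,1}}\mathfrak{F}$ and a quantity converging to it, and then to read off all assertions from the quantitative estimate \eqref{eq:norm_bound_u_k} of Lemma~\ref{lemma_existence_uk} combined with a standard Poincar\'e inequality.

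First I would show that $\inf_{\sobo^{1,2}(\Omega;\R^{N})}\mathfrak{F}=\inf_{\sobo^{1,1}(\Omega;\R^{N})}\mathfrak{F}$. For this, I observe that convexity of $f$ together with the linear growth bound \eqref{eq:lingrowth} forces $0\leq f'\leq L$ on $\R^+_0$ (compare the convexity inequality $f(t)\geq f(s)+f'(s)(t-s)$ at $t\to\infty$ with the upper bound $f(t)\leq L(t+1)$). Hence $t\mapsto f(t)$ is Lipschitz with constant~$L$, so together with $T_{0}\in\lebe^{\infty}(\Omega;\R^{N\times n})$ the functional $\mathfrak{F}$ is continuous on $\sobo^{1,1}(\Omega;\R^{N})$ with respect to strong convergence. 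Since $\sobo^{1,2}(\Omega;\R^{N})\supset\hold^{\infty}(\overline{\Omega};\R^{N})$ is dense in $\sobo^{1,1}(\Omega;\R^{N})$ (e.g.\ by mollification combined with the extension operator for Lipschitz domains), the two infima coincide. Together with the trivial inequality $\mathfrak{F}_{k}\geq\mathfrak{F}$ on $\sobo^{1,2}(\Omega;\R^{N})$, this already yields the lower bound $\inf_{\sobo^{1,2}}\mathfrak{F}_{k}\geq\inf_{\sobo^{1,1}}\mathfrak{F}$.

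For the matching upper bound, given $\varepsilon>0$ I would pick $w_{\varepsilon}\in\sobo^{1,2}(\Omega;\R^{N})$ with $\mathfrak{F}[w_{\varepsilon}]\leq\inf_{\sobo^{1,1}}\mathfrak{F}+\varepsilon$, and then test minimality of $u_{k}$ against $w_{\varepsilon}$:
\begin{equation*}
\mathfrak{F}_{k}[u_{k}]\leq\mathfrak{F}_{k}[w_{\varepsilon}]=\mathfrak{F}[w_{\varepsilon}]+(2k)^{-1}\|\nabla w_{\varepsilon}\|_{\lebe^{2}(\Omega;\R^{N\times n})}^{2}.
\end{equation*}
For $k$ sufficiently large (depending on $w_{\varepsilon}$) the viscosity contribution is at most $\varepsilon$, so $\limsup_{k}\mathfrak{F}_{k}[u_{k}]\leq\inf_{\sobo^{1,1}}\mathfrak{F}+2\varepsilon$. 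Letting $\varepsilon\searrow 0$ gives the claimed convergence $\mathfrak{F}_{k}[u_{k}]=\inf_{\sobo^{1,2}}\mathfrak{F}_{k}\to\inf_{\sobo^{1,1}}\mathfrak{F}$. The chain $\inf_{\sobo^{1,1}}\mathfrak{F}\leq\mathfrak{F}[u_{k}]\leq\mathfrak{F}_{k}[u_{k}]$ then forces $\mathfrak{F}[u_{k}]\to\inf_{\sobo^{1,1}}\mathfrak{F}$ as well, proving the minimising sequence property.

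Finally, from the identity $(2k)^{-1}\|\nabla u_{k}\|_{\lebe^{2}(\Omega;\R^{N\times n})}^{2}=\mathfrak{F}_{k}[u_{k}]-\mathfrak{F}[u_{k}]$ and the fact that both sides on the right share the same limit, I deduce $k^{-1/2}\nabla u_{k}\to 0$ in $\lebe^{2}(\Omega;\R^{N\times n})$. Since $(\mathfrak{F}_{k}[u_{k}])_{k\in\N}$ is convergent, hence bounded, estimate \eqref{eq:norm_bound_u_k} from Lemma~\ref{lemma_existence_uk} provides a uniform bound on $\|\nabla u_{k}\|_{\lebe^{1}}+k^{-1}\|\nabla u_{k}\|_{\lebe^{2}}^{2}$. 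Combining this with the zero-mean Poincar\'e inequality $\|u_{k}\|_{\lebe^{p}}\leq C\|\nabla u_{k}\|_{\lebe^{p}}$ (applied with $p=1$ and $p=2$) yields \eqref{eq:unifbound1}. No step looks genuinely delicate; the only place requiring a moment's care is the verification that $\mathfrak{F}$ is sequentially continuous on $\sobo^{1,1}(\Omega;\R^{N})$, which rests on the Lipschitz bound $f'\leq L$ forced by convexity and linear growth.
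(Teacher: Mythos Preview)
Your proof is correct and follows essentially the same sandwich argument as the paper: both test the minimality of $u_k$ against a fixed $\sobo^{1,2}$-competitor that nearly realises $\inf_{\sobo^{1,1}}\mathfrak{F}$, read off the convergence of $\mathfrak{F}_k[u_k]$ and hence of $k^{-1/2}\nabla u_k$, and finish with \eqref{eq:norm_bound_u_k} plus the zero-mean Poincar\'e inequality. The only difference is that you make explicit (via the Lipschitz bound $f'\leq L$ and density of $\sobo^{1,2}$ in $\sobo^{1,1}$) why such a $\sobo^{1,2}$-competitor exists, whereas the paper simply asserts the choice of $v_\varepsilon\in\sobo^{1,2}$ with $\mathfrak{F}[v_\varepsilon]\leq\inf_{\sobo^{1,1}}\mathfrak{F}+\varepsilon/2$ without comment.
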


\begin{proof}
In order to prove the first claim, for a fixed number $\varepsilon > 0$, we choose first a function $v_{\varepsilon} \in \sobo^{1,2}(\Omega;\R^{N})$ and then an index $k_0 \in \N$ such that
\begin{equation*}
  \mathfrak{F}[v_{\varepsilon}] \leq  \inf_{\sobo^{1,1}(\Omega;\R^{N})} \mathfrak{F} + \frac{\varepsilon}{2} \quad \text{and} \quad (2k_0)^{-1}  \|\nabla v_{\varepsilon} \|_{\lebe^{2}(\Omega;\R^{N \times n})}^2 \leq \frac{\varepsilon}{2}
\end{equation*}
hold. In this way, we obtain by the minimality of~$u_k$ for all indices $k \geq k_0$
\begin{align*}
\inf_{\sobo^{1,1}(\Omega;\R^{N})} \mathfrak{F} & \leq \mathfrak{F}[u_{k}] \leq \mathfrak{F}[u_{k}] + (2k)^{-1}  \|\nabla u_{k} \|_{\lebe^{2}(\Omega;\R^{N \times n})}^2 = \mathfrak{F}_{k}[u_{k}] \\
  & = \inf_{\sobo^{1,2}(\Omega;\R^{N})} \mathfrak{F}_{k} \\
  & \leq \mathfrak{F}_{k} [v_{\varepsilon}] = \mathfrak{F} [v_{\varepsilon}] + (2k)^{-1}  \|\nabla v_{\varepsilon} \|_{\lebe^{2}(\Omega;\R^{N \times n})}^2 \leq \inf_{\sobo^{1,1}(\Omega;\R^{N})} \mathfrak{F} + \varepsilon\,,
\end{align*}
and the first assertion follows by arbitrariness of~$\varepsilon$. Moreover, from this chain of inequalities, we also read off the strong convergence $k^{-1/2} \nabla u_{k} \to 0$ in $\lebe^{2}(\Omega;\R^{N \times n})$. Finally, in view of $(u_{k})_{\Omega}=0$, we may apply Poincar\'{e}'s inequality in the mean value version in the spaces $\sobo^{1,2}(\Omega;\R^{N})$ and $\sobo^{1,1}(\Omega;\R^{N})$ to~$u_{k}$, and we thus infer the last claim~\eqref{eq:unifbound1} as a direct consequence of the estimate~\eqref{eq:norm_bound_u_k}.
\end{proof}

Let us note that the uniform bound~\eqref{eq:unifbound1}, Chacon's biting Lemma~\ref{lemma_biting} and the compact embedding $\sobo^{1,1}(\Omega;\R^{N})\hookrightarrow \lebe^{1}(\Omega;\R^{N})$ allows to conclude that there exist functions $u \in\bv(\Omega;\R^{N})$ with $(u)_\Omega=0$ and $E\in\lebe^{1}(\Omega;\R^{N\times n})$ such that, for a suitable non-relabelled subsequence, we have
\begin{align}
\label{eq:BVconv}
u_k \wstar u \qquad & \text{in } \bv(\Omega;\R^N), \\
u_{k}\to u \qquad & \text{in } \lebe^{1}(\Omega;\R^{N}),  \nonumber \\
\nabla u_{k} \wbit E \qquad & \text{in } \lebe^{1}(\Omega;\R^{N \times n}), \label{eq:wbit}
\end{align}
as $k \to \infty$. In order to prove the existence of a minimiser of the original functional $\mathfrak{F}$ in the space $\sobo^{1,1}(\Omega;\R^{N})$, we shall now investigate the sequence $(u_{k})_{k\in \N}$ in more detail, with the aim to get a convergence result which is more suitable for the minimisation problem~\eqref{eq:varprin}.

\subsection{A~Priori Estimates}
\label{section_a_priori_estimates}

We shall next derive suitable a~priori estimates for the sequence $(u_{k})_{k\in \N}$ which, in particular, will allow us to conclude the pointwise convergence of $(\nabla u_{k})_{k \in \N}$ to its biting-limit~$E$ almost everywhere in~$\Omega$. We begin by showing that the sequence $(u_{k})_{k \in \N}$ constructed in the previous section indeed is in $\sobo^{2,2}_{\locc}(\Omega;\R^{N})$.

\begin{lemma}\label{lem:sobolevregularity}
Consider a convex function $f\in\hold^{2}(\R_{0}^{+})$ which satisfies $f(0) = f'(0)= 0$, the linear growth condition~\eqref{eq:lingrowth} and the bound~\eqref{eq:sec_der_bound}, and let $T_{0}\in \sobo^{2,\infty}(\Omega;\R^{N\times n})$ verify~\eqref{eq:coercivcond}.  Then, for each $k \in \N$, the minimiser~$u_k$ from Lemma~\eqref{lemma_existence_uk} satisfies $u_{k}\in\sobo^{2,2}_{\locc}(\Omega;\R^{N})$, and moreover, for every compact set $K \subset \Omega$ there holds
\begin{equation}
\label{eq:maincoercive}
\sup_{k \in \N} \Big\{ \sum_{s=1}^n \int_{K} \D_z \A(\sg u_{k}) [\partial_s \sg u_{k},\partial_s \sg u_k ] \dif x + k^{-1} \int_{K} |\nabla^2 u_{k}|^2 \dif x \Big\} < \infty.
\end{equation}
\end{lemma}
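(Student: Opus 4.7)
\emph{Plan.} I proceed in two stages. First, for each fixed $k \in \N$, I establish the interior Sobolev regularity $u_k \in \sobo^{2,2}_\locc(\Omega;\R^N)$ by the classical difference quotient method applied to~\eqref{eq:approxel}. Concretely, after fixing a compact set $K \Subset \Omega$ and a cut-off function $\eta \in \ccone(\Omega)$ with $\eta \equiv 1$ on $K$ and $0 \leq \eta \leq 1$, I test~\eqref{eq:approxel} with $\varphi = \taushone(\eta^2 \taush u_k)$ for $s \in \{1,\dots,n\}$ and $|h|$ smaller than one half of the distance from the support of $\eta$ to $\partial\Omega$. The standard discrete integration by parts combined with the mean-value representation $\taush \A_k(\sg u_k) = \int_0^1 \D_z \A_k(\sg u_k + t\,\taush \sg u_k)[\taush \sg u_k, \cdot]\dif t$ and the ellipticity $\D_z \A_k[\zeta,\zeta] \geq k^{-1}|\zeta|^2$ yields, after absorbing the cross term via Young's inequality and handling the $T_0$-contribution by means of $T_0 \in \sobo^{2,\infty}$, an $h$-independent bound on $\|\taush \sg u_k/h\|_{\lebe^2(K;\R^{N\times n})}$, whence $u_k \in \sobo^{2,2}_\locc(\Omega;\R^N)$ follows from the standard difference quotient criterion.

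For the second stage (the uniform estimate~\eqref{eq:maincoercive}), this regularity permits differentiating~\eqref{eq:approxel} in direction $\partial_s$: testing with $\varphi = -\partial_s \psi$ for $\psi \in \ccinfty(\Omega;\R^N)$ and integrating by parts gives, after extending by density,
\begin{equation*}
\int_\Omega \D_z \A_k(\sg u_k)[\partial_s \sg u_k, \sg \psi] \dif x = \int_\Omega \partial_s T_0 \cdot \sg \psi \dif x
\end{equation*}
for all $\psi \in \sobo^{1,2}(\Omega;\R^N)$ with compact support in $\Omega$. Plugging in $\psi = \eta^2 \partial_s u_k$, expanding $\sg \psi$, and applying the Cauchy--Schwarz inequality for the symmetric non-negative bilinear form $\D_z \A_k(\sg u_k)$ together with Young's inequality allows me to absorb one half of $\int \eta^2 \D_z \A_k(\sg u_k)[\partial_s \sg u_k, \partial_s \sg u_k]\dif x$ into the left-hand side, leaving on the right the quantity $\int \D_z \A_k(\sg u_k)[\sg\eta \otimes \partial_s u_k, \sg\eta \otimes \partial_s u_k]\dif x$ plus the $T_0$-contribution. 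Invoking the upper bound~\eqref{eq:growth_D_z_A} together with the pointwise inequality $|\partial_s u_k|^2/(1+|\sg u_k|) \leq |\sg u_k|$, the former is controlled by $C\|\sg\eta\|_{\lebe^\infty}^2 \big(\|\sg u_k\|_{\lebe^1(\Omega;\R^{N\times n})} + k^{-1}\|\sg u_k\|_{\lebe^2(\Omega;\R^{N\times n})}^2\big)$, both summands being uniformly bounded by Corollary~\ref{cor_uniform_bound_uk}. The $T_0$-term is handled by one further integration by parts that transfers the $\sg$-derivative onto $\partial_s T_0 \in \sobo^{1,\infty}$ and leaves $\|\sg^2 T_0\|_{\lebe^\infty}\|\sg u_k\|_{\lebe^1(\Omega;\R^{N\times n})}$. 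Summing over $s$ and recalling $\D_z \A_k = \D_z \A + k^{-1}\id$ then yields~\eqref{eq:maincoercive}.

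The main obstacle is the cross term in the differentiated identity: a naive Young's inequality based only on the operator bound $\|\D_z \A_k\|_{\mathrm{op}} \leq 2L + k^{-1}$ would introduce a factor $k$ on the right-hand side through the $\lebe^2$-norm of $\sg u_k$, and hence would fail to deliver a $k$-uniform bound. Applying the Cauchy--Schwarz inequality to the bilinear form $\D_z \A_k$ \emph{itself} is what preserves the finer $(1+|\sg u_k|)^{-1}$-decay from~\eqref{eq:growth_D_z_A}; this decay precisely cancels one power of $|\sg u_k|$ in the resulting cut-off contribution and reduces it to the $\lebe^1$-norm of $\sg u_k$ that Corollary~\ref{cor_uniform_bound_uk} controls uniformly in $k$.
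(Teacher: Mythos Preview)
Your proposal is correct and follows essentially the same route as the paper: difference quotients with the test function $\Delta_{s,-h}(\eta^2\Delta_{s,h}u_k)$ and the ellipticity lower bound $k^{-1}|\zeta|^2$ for the (non-uniform) $\sobo^{2,2}_\locc$-regularity, then differentiation of the Euler--Lagrange system, the test function $\eta^2\partial_s u_k$, Young's inequality in the bilinear form $\D_z\A_k(\sg u_k)$, and the split $\D_z\A_k=\D_z\A+k^{-1}\id$ together with~\eqref{eq:growth_D_z_A} and an integration by parts on the $T_0$-term for the uniform bound. The only cosmetic discrepancy is the tensor ordering $\sg\eta\otimes\partial_s u_k$ (it should read $\partial_s u_k\otimes\sg\eta\in\R^{N\times n}$), which does not affect the argument.
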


\begin{proof}
Let $\eta \in \hold^{1}_0(\Omega;[0,1])$ be a localization function with $\eta \equiv 1$ on the given, compactly supported subset~$K$ of~$\Omega$. For~$h \in \R \setminus \{0\}$ with~$|h| < \dista(K,\partial\Omega)$ and $s \in \{1,\ldots,n\}$ we denote by $\Delta_{s,h}$ the finite difference quotient operator with respect to direction~$e_s$ and stepsize~$h$, and we then choose $\varphi \coloneqq  \Delta_{s,-h}(\eta^{2}\Delta_{s,h} u_{k}) \in \sobo^{1,2}(\Omega;\R^{N})$ as a test function in the Euler--Lagrange system~\eqref{eq:approxel}. In this way, we obtain with the integration by parts formula for finite difference quotients and the standard one
\begin{multline}
\label{W22_start}
\int_{\Omega} \Delta_{s,h}(\A_{k}(\nabla u_{k})) \cdot \big[ \eta^{2} \Delta_{s,h}\nabla u_{k}+2\eta \Delta_{s,h} u_{k} \otimes \nabla \eta \big] \dif x \\
   = - \int_{\Omega} \Delta_{s,h} \di T_{0} \cdot \eta^{2}\Delta_{s,h} u_{k} \dif x ,
\end{multline}
which is the starting point for the proof of higher Sobolev regularity. For the right-hand side of~\eqref{W22_start} we obtain from standard properties (regarding norm estimates) for finite difference quotients, in view of $T_{0}\in\sobo^{2,\infty}(\Omega;\R^{N\times n})$ and the uniform bound~\eqref{eq:unifbound1}, the estimate
\begin{equation}
\label{W22_rhs}
- \int_{\Omega} \Delta_{s,h} \di T_{0} \cdot \eta^{2}\Delta_{s,h} u_{k} \dif x \leq C \|T_{0}\|_{\sobo^{2,\infty}(\Omega;\R^{N\times n})} \|\nabla u_{k}\|_{\lebe^{1}(\Omega;\R^{N\times n})}\leq C
\end{equation}
with a constant~$C$ depending only on $\Omega$, $f$, $\|T_{0}\|_{\sobo^{2,\infty}(\Omega;\R^{N\times n})}$ and~$\eta$ (but independent of $k \in \N$). In order to find some coerciveness estimate for the left-hand side~\eqref{W22_start}, let us first rewrite
\begin{equation*}
 \Delta_{s,h}(\A_{k}(\nabla u_{k}(x))) = \int_{0}^{1} \D_z \A_{k} (\sg u_{k}(x)+th\Delta_{s,h}\sg u_{k}(x)) \dif t \Delta_{s,h}\sg u_{k}(x)
\end{equation*}
for $x \in K$. Thus, for shorter notation, we introduce the bilinear form ${\mathcal B}_{k,h}(x) \colon \R^{N\times n} \times \R^{N\times n} \to \R$, for all $k \in \N$, $h \in \R \setminus \{0\}$ and $x \in \Omega$ such that $\dista(x,\partial\Omega)>h$, by
\begin{align*}
{\mathcal B}_{k,h}(x)[\zeta,\tilde{\zeta}]\coloneqq \int_{0}^{1} \D_z \A_{k} (\sg u_{k}(x)+th\Delta_{s,h}\sg u_{k}(x)) [\zeta , \tilde{\zeta}] \dif t \qquad \text{for } \zeta,\tilde{\zeta} \in\R^{N\times n}.
\end{align*}
Note that, by definition, the radial structure and due to the convexity of~$f$ with $f'(0)=0$, these bilinear forms are (for all $k$, $h$ and $x$ as above) symmetric and positive definite, with lower bound ${\mathcal B}_{k,h}(x)[\zeta,\zeta] \geq k^{-1} |\zeta|^2$ for all $\zeta \in\R^{N\times n}$. Consequently, applying Young's inequality in the bilinear forms ${\mathcal B}_{k,h}(x)$ and invoking~\eqref{W22_rhs}, we deduce from~\eqref{W22_start} the estimate
\begin{align*}
\lefteqn{\int_{\Omega} \eta^{2}{\mathcal B}_{k,h}(x)[\Delta_{s,h}\sg u_{k},\Delta_{s,h}\sg u_{k}]\dif x} \\
 & = - 2\int_{\Omega}{\mathcal B}_{k,h}(x)[\Delta_{s,h}\sg u_{k},\eta \Delta_{s,h}u_{k} \otimes  \nabla \eta ] \dif x - \int_{\Omega} \Delta_{s,h} \di T_{0} \cdot \eta^{2}\Delta_{s,h} u_{k} \dif x \\
 & \leq \frac{1}{2} \int_{\Omega} \eta^{2} {\mathcal B}_{k,h}(x)[\Delta_{s,h}\sg u_{k},\Delta_{s,h}\sg u_{k}]\dif x \\
 &  \quad + 2 \int_{\Omega} {\mathcal B}_{k,h}(x)[ \Delta_{s,h}u_{k} \otimes  \nabla\eta ,\Delta_{s,h}u_{k} \otimes \nabla\eta ]\dif x + C.
\end{align*}
We may now absorb the first term of the right-hand side into the left-hand side. By~\eqref{eq:growth_D_z_A} in conjunction with~\eqref{eq:Aregularised}, by standard properties of finite difference quotients and by~\eqref{eq:unifbound1} we then obtain
\begin{align*}
k^{-1}\int_{\Omega}\eta^{2} |\Delta_{s,h}\sg u_{k}|^2 \dif x & \leq \int_{\Omega}\eta^{2}{\mathcal B}_{k,h}(x)[\Delta_{s,h}\sg u_{k},\Delta_{s,h}\sg u_{k}]\dif x  \\
  &  \leq  C \int_{\Omega} | \nabla \eta|^2 |\Delta_{s,h}u_{k}(x)|^{2}\dif x + C \leq C
\end{align*}
for a constant~$C$ depending only on $\Omega$, $f$, $\|T_{0}\|_{\sobo^{2,\infty}(\Omega;\R^{N\times n})}$, $\eta$ and~$k$. By choice of the localization function $\eta$ we thus obtain, for each $k \in \N$, that $\Delta_{s,h}\sg u_{k}$ is bounded uniformly for all $h \in \R \setminus \{0\}$ with~$|h| < \dista(K,\partial\Omega)$ in $\lebe^2(K; \R^{N \times n})$, though not uniformly in~$k$. The $\sobo^{2,2}_{\locc}$-regularity of $u_k$ then follows from the usual difference-quotient type characterisation of $\sobo^{1,2}$ and the arbitrariness of~ the compact set $K \subset \Omega$ and of $s \in \{1,\ldots, n\}$.

Once the $\sobo^{2,2}_{\locc}$-regularity of each function~$u_k$ is at our disposal, we may now proceed to the proof of the uniform estimate. To this end, we first differentiate the Euler--Lagrange system~\eqref{eq:approxel} and repeat essentially the same computations as above, but now with the differential~$\partial_s$ instead of the difference quotient operator~$\Delta_{s,h}$. More precisely, starting from the
identity
\begin{equation*}
 \int_{\Omega} \D_z \A_{k}(\sg u_{k}) [ \partial_s \sg u_{k} , \sg \varphi] \dif x = \int_{\Omega} \partial_s T_{0} \cdot \sg \varphi \dif x
\end{equation*}
for all functions $\varphi \in W^{1,2}(\Omega;\R^{N})$ with compact support in~$\Omega$, we choose $\varphi = \eta^2 \partial_s u$ with $\eta  \in \hold^{1}_0(\Omega;[0,1])$ a localization function on the compact set $K \subset \Omega$ as above. Doing so, we find via Young's inequality (applied to the positive definite bilinear forms $\D_z \A_{k}(\sg u_{k}(x))$ corresponding to ${\mathcal B}_{k,0}(x)$ above) and the integration by parts formula
\begin{align*}
\lefteqn{\int_{\Omega} \eta^2 \D_z \A_{k}(\sg u_{k}) [\partial_s \sg u_{k}, \partial_s \sg u_k ] \dif x  } \\
  & = -2 \int_{\Omega} \eta \D_z \A_{k} \sg u_{k}) [ \partial_s \sg u_{k}, \partial_s u_k \otimes \sg \eta ]  \dif x
  + \int_{\Omega} \partial_s T_{0} \cdot \nabla( \eta^2 \partial_s u)  \dif x \\
  & \leq \frac{1}{2} \int_{\Omega} \eta^2 \D_z \A_{k}(\sg u_{k}) [ \partial_s \sg u_{k}, \partial_s \sg u_k] \dif x \\
  & \quad + 2 \int_{\Omega} \eta^2 \D_z \A_{k}(\sg u_{k}) [\partial_s u_k \otimes \sg \eta, \partial_s u_k \otimes \sg \eta] \dif x - \int_{\Omega} \eta^2 \partial_s \di T_{0} \cdot \partial_s u_k \dif x
\end{align*}
After absorbing the first integral on the right-hand side into the left-hand side, we directly obtain the lower bound given in the statement via the definition~\eqref{eq:Aregularised} of~$\A_k$, while the remaining terms on the right-hand side of the previous inequality are estimated via~\eqref{eq:growth_D_z_A}, combined with~\eqref{eq:Aregularised} and $T_0 \in \sobo^{2,\infty}(\Omega; \R^{N \times n})$. This yields
\begin{align*}
\lefteqn{  \int_{\Omega} \eta^2 \D_z \A(\sg u_{k}) [ \partial_s \sg u_{k}, \partial_s \sg u_k]\dif x + k^{-1} \int_{\Omega} \eta^2|\partial_s \sg u_{k}|^2 \dif x } \\
  & \leq \int_{\Omega} \eta^2 \D_z \A_{k}(\sg u_{k}) [ \partial_s \sg u_{k} , \partial_s \sg u_k ] \dif x \\
  & \leq C \big( \| \partial_s u_k \|_{\lebe^{1}(\Omega;\R^{N})} + k^{-1} \| \partial_s u_k \|_{\lebe^{2}(\Omega;\R^{N})}^2 \big)
\end{align*}
with a constant~$C$ depending only on~$L$, $\|T_{0}\|_{\sobo^{2,\infty}(\Omega;\R^{N\times n})}$ and $\eta$, but not on~$k$. At this stage, the assertion~\eqref{eq:maincoercive} of the lemma follows from the uniform bound~\eqref{eq:unifbound1}, combined with the arbitrariness of $s \in \{1,\ldots,n\}$.
\end{proof}

\begin{remark}
Invoking the condition~\eqref{eq:hmonotone} of $h$-monotonicity satisfied by the integrand with  $h>0$ almost everywhere on~$\R_{0}^{+}$, we can interpret the uniform estimate~\eqref{eq:maincoercive} as a weighted Sobolev-type estimate, namely that we have, for every compact set $K \subset \Omega$,
\begin{equation}
\label{eq:maincoercive_h}
\sup_{k \in \N} \Big\{ \int_{K} h(|\sg u_{k}|) |\nabla^2 u_{k}|^2 \dif x \Big\} < \infty.
\end{equation}
\end{remark}

The uniform bound~\eqref{eq:maincoercive_h} constitutes the key ingredient in order to establish the pointwise convergence of the gradients $(\nabla u_k)_{k \in \N}$.

\begin{corollary}
\label{cor_pointwise_convergence}
If the assumptions of the previous Lemma~\ref{lem:sobolevregularity} is satisfied and $f$ is strictly convex, then we have
\begin{equation}
\label{eq:pointwise0}
\nabla u_{k}\to E\qquad\mathscr{L}^{n}\text{-a.e.~in }\Omega\quad\text{as } k \to \infty,
\end{equation}
where $E\in\lebe^{1}(\Omega;\R^{N\times n})$ is given by the biting limit~\eqref{eq:wbit}.
\end{corollary}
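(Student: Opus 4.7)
My plan is to turn the weighted second-order estimate~\eqref{eq:maincoercive_h} into an \emph{unweighted} $\sobo^{1,2}_{\locc}$-bound on the bounded nonlinear quantity $\A(\sg u_k) = f'(|\sg u_k|)\sg u_k/|\sg u_k|$, extract a pointwise limit for $\A(\sg u_k)$ via Rellich compactness, and then recover pointwise convergence of $\sg u_k$ itself by inverting~$\A$.

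For the first step I would exploit that at every $x \in \Omega$, the matrix $\D_z \A(\sg u_k(x))$ is a symmetric, positive semidefinite endomorphism of $\R^{N\times n}$ of operator norm at most $2L/(1+|\sg u_k(x)|)$ in view of~\eqref{eq:growth_D_z_A}. For any such symmetric positive semidefinite $B$ and any $v$, the elementary inequality $|Bv|^2 \leq \|B\|_{\mathrm{op}}\, B[v,v]$ holds; applied with $v = \partial_s \sg u_k$, summed over $s \in \{1,\dots,n\}$ and integrated over $K \Subset \Omega$ it yields
\[
\int_K |\sg \A(\sg u_k)|^2 \dif x \leq 2L \sum_{s=1}^n \int_K \D_z \A(\sg u_k)[\partial_s \sg u_k, \partial_s \sg u_k] \dif x ,
\]
whose right-hand side is uniformly bounded by~\eqref{eq:maincoercive}. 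Since $|\A(z)| = f'(|z|) < f^{\infty}(1)$ provides a free $\lebe^{\infty}$-bound, the sequence $(\A(\sg u_k))_k$ is then uniformly bounded in $\sobo^{1,2}(K;\R^{N\times n})$ for every $K\Subset\Omega$. Rellich--Kondrachov together with a diagonal extraction delivers a non-relabelled subsequence along which $\A(\sg u_k) \to W$ strongly in $\lebe^{2}_{\locc}(\Omega;\R^{N\times n})$ and $\mathscr{L}^n$-a.e., for some $W \in \lebe^{\infty}(\Omega;\R^{N\times n})$ with $|W| \leq f^{\infty}(1)$.

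Next I would invert $\A$ pointwise. Strict convexity of $f$ makes $f'$ a strictly increasing homeomorphism $\R^+_0 \to [0,f^{\infty}(1))$, so $\A$ is a continuous bijection from $\R^{N\times n}$ onto the open ball $\{|w| < f^{\infty}(1)\}$ with continuous inverse on its image. On the set $\{|W| < f^{\infty}(1)\}$ the values $\sg u_k(x)$ must stay bounded for the whole sequence (any unbounded subsequence would force $|\A(\sg u_k(x))| = f'(|\sg u_k(x)|) \to f^{\infty}(1) > |W(x)|$), and continuity of $\A^{-1}$ on compact subsets then gives $\sg u_k(x) \to \A^{-1}(W(x))$. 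I expect the main obstacle to be the \emph{saturation set} $B \coloneqq \{|W| = f^{\infty}(1)\}$: there $|\A(\sg u_k)| \to f^{\infty}(1)$ and the strict monotonicity of $f'$ force $|\sg u_k| \to \infty$ for the full sequence. Fatou's lemma would then give $\liminf_k \int_B |\sg u_k|\dif x \geq \infty \cdot \mathscr{L}^n(B)$, which contradicts the uniform $\lebe^{1}$-bound in~\eqref{eq:unifbound1} unless $\mathscr{L}^n(B) = 0$. Consequently, $\sg u_k \to \A^{-1}(W)$ a.e.~on $\Omega$.

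Finally, to identify this a.e.\ limit with the biting limit $E$, I would restrict to any $\Omega_j$ from Chacon's Lemma~\ref{lemma_biting}, on which $\sg u_k \rightharpoonup E$ weakly in $\lebe^{1}(\Omega_j;\R^{N\times n})$. By Dunford--Pettis the sequence $(\sg u_k)$ is uniformly integrable on $\Omega_j$, so Vitali's convergence theorem together with the pointwise convergence just obtained upgrades this to strong $\lebe^{1}$-convergence on $\Omega_j$. Uniqueness of weak $\lebe^{1}$-limits then forces $\A^{-1}(W) = E$ almost everywhere on $\Omega_j$, and exhausting $\Omega$ via the increasing family $(\Omega_j)$ concludes the proof. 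The genuinely delicate point is the handling of the saturation set, where neither the $\lebe^{\infty}$-boundedness of $\A$ nor the weak biting convergence alone rules out pointwise blow-up of the gradients, and one must combine both with the Fatou argument above.
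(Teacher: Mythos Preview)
Your argument is correct and reaches the same conclusion as the paper, but by a somewhat different and arguably cleaner route. Both proofs start with the same key step: turning~\eqref{eq:maincoercive} into a uniform $\sobo^{1,2}_{\locc}$-bound for $\alpha_k \coloneqq \A(\sg u_k)$ (your operator inequality $|Bv|^2 \leq \|B\|_{\mathrm{op}} B[v,v]$ is just a repackaging of the Cauchy--Schwarz estimate~\eqref{CS_sec_der} in the paper), and then extracting $\mathscr{L}^n$-a.e.\ convergence $\alpha_k \to W$ via Rellich. The difference lies in how one recovers $\sg u_k$ from $\alpha_k$. The paper introduces a second auxiliary quantity $\beta_k \coloneqq \tilde{h}(|\sg u_k|)$ built from the $h$-monotonicity function, shows it too is compact in $\sobo^{1,2}_{\locc}$, and then writes $\sg u_k$ explicitly as a continuous function of the pair $(\alpha_k,\beta_k)$; Fatou's lemma is used to show the limit $\beta>0$ a.e., which is what rules out blow-up. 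You bypass $\beta_k$ entirely: you invert $\A$ directly on $\{|W|<f^\infty(1)\}$, and deal with the saturation set $\{|W|=f^\infty(1)\}$ by observing that $|\sg u_k|\to\infty$ there for the full sequence and applying Fatou against the uniform $\lebe^1$-bound to force $\mathscr{L}^n(\{|W|=f^\infty(1)\})=0$. Your approach is more elementary in that it avoids the auxiliary $\tilde{h}$; the paper's version, on the other hand, produces the limit of $|\sg u_k|$ as an explicit function $\tilde{h}^{-1}(\beta)$, which can occasionally be useful downstream. Your identification of the pointwise limit with the biting limit $E$ via Dunford--Pettis and Vitali is also slightly more explicit than the paper's terse ``uniqueness of the limits''.
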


\begin{proof}
We here follow the strategy of proof of \cite[Section~4.4]{BECBULMALSUE17}. We start by defining an auxiliary function $\tilde{h} \in \hold^1(\R^+,\R^+)$ via
\begin{equation*}
\tilde{h}(t)\coloneqq \int_{t}^{\infty}\frac{h(\tau)}{1+\tau}\dif \tau,\qquad \text{for } t>0,
\end{equation*}
where the function~$h$ was introduced in Remark~\ref{rem_h_monotone}. Since~$h$ is almost everywhere positive, $\tilde{h}$ is strictly monotonically decreasing and, moreover, since~$h$ satisfies~\eqref{eq:growth_D_z_A}, we have
\begin{equation*}
\tilde{h}(t) \leq \int_{t}^{\infty}\frac{2L}{(1+\tau)^{2}}\dif \tau = 2L (1+t)^{-1} \qquad\text{for }t>0.
\end{equation*}
Next, we introduce the functions
\begin{equation*}
\alpha_{k}\coloneqq A(\sg u_k) \qquad \text{and} \qquad \beta_{k}\coloneqq \tilde{h}(|\sg u_{k}|)
\end{equation*}
for $k \in \N$. Obviously, $\alpha_k$ and $\beta_k$ are bounded in~$\Omega$. Next, we observe from Cauchy--Schwarz inequality for each $s \in \{1,\ldots,n\}$ 
\begin{align}
 | \partial_s \alpha_k|^2 & =  \D_z \A(\sg u_{k}) [ \partial_s \sg u_{k}, \partial_s \alpha_k] \label{CS_sec_der} \\
  & \leq  \big( \D_z \A(\sg u_{k}) [ \partial_s \sg u_{k}, \partial_s \sg u_{k} ] \big)^{\frac{1}{2}} \big( \D_z \A(\sg u_{k}) [ \partial_s  \alpha_k, \partial_s  \alpha_k ] \big)^{\frac{1}{2}} \nonumber \\
  & \leq L^{\frac{1}{2}}  \big( \D_z \A(\sg u_{k}) [ \partial_s \sg u_{k}, \partial_s \sg u_{k} ] \big)^{\frac{1}{2}} | \partial_s \alpha_k| \nonumber
\end{align}
and thus
\begin{equation*}
 | \nabla \alpha_k|^2 \leq L \sum_{s=1}^n \D_z \A(\sg u_{k}) [ \partial_s \sg u_{k}, \partial_s \sg u_{k} ] ,
\end{equation*}
while from the definition of~$\tilde{h}$ and the bound on~$h$ we directly get 
\begin{equation*}
 | \nabla \beta_k |^2 \leq  2L h(|\sg u_{k}|)  |\nabla^2 u_{k}|^2.
\end{equation*}
In conclusion, by~\eqref{eq:growth_D_z_A} we have shown
\begin{equation*}
 | \nabla \alpha_k |^2 +| \nabla \beta_k |^2 \leq  3L \sum_{s=1}^n  \D_z \A(\sg u_{k}) [ \partial_s \sg u_{k}, \partial_s \sg u_k],
\end{equation*}
and Lemma~\ref{lem:sobolevregularity} thus yields
\begin{equation*}
\sup_{k \in \N} \Big\{ \|\alpha_{k}\|_{\lebe^{\infty}(\Omega;\R^{N \times n})}+\|\beta_{k}\|_{\lebe^{\infty}(\Omega)} + \| \alpha_{k}\|_{\sobo^{1,2}(K;\R^{N \times n})} + \| \beta_{k}\|_{\sobo^{1,2}(K)} \Big\} < \infty
\end{equation*}
for each compact set $K \subset \Omega$. If~$K$ has a Lipschitz boundary, we find, thanks to the compact embedding $\sobo^{1,2}(K;\R^{N \times n}) \hookrightarrow\lebe^{1}(K;\R^{N \times n})$, non-relabelled subsequences such that the following convergence results hold:
\begin{align*}
& \alpha_{k}\rightharpoonup \alpha \qquad \text{weakly in } \lebe^{1}(\Omega;\R^{N \times n}),\\
& \alpha_{k}\to \alpha\qquad\text{strongly in }\lebe^{1}(K;\R^{N \times n}),\\
& \alpha_{k}\to \alpha\qquad\mathscr{L}^{n}\text{-a.e.~in } \Omega\\
& \beta_{k}\rightharpoonup \beta \qquad \text{weakly in } \lebe^{1}(\Omega),\\
& \beta_{k}\to \beta\qquad\text{strongly in }\lebe^{1}(K),\\
& \beta_{k}\to\beta\qquad\mathscr{L}^{n}\text{-a.e.~in }\Omega.
\end{align*}
Since $\tilde{h}$ is strictly decreasing on $\R^+$, the inverse $\tilde{h}^{-1}$ exists on the set $\tilde{h}(\R^+)$, is non-negative, decreasing and continuous. Thus, in view of Fatou's lemma and the boundedness of $(\sg u_{k})_{k \in \N}$ in $\lebe^{1}(\Omega;\R^{N \times n})$ by~\eqref{eq:unifbound1}, we get
\begin{equation*}
\int_{\Omega}\tilde{h}^{-1}(\beta)\dif x \leq \liminf_{k \to \infty}\int_{\Omega}\tilde{h}^{-1}(\beta_{k})\dif x = \liminf_{k \to \infty}\int_{\Omega} |\sg u_{k}| \dif x  < \infty .
\end{equation*}
With $\lim_{t \to \infty} \tilde{h}(t) = 0$ and thus $\lim_{t \to 0} \tilde{h}^{-1}(t)=\infty$, we easily deduce that $\beta>0$ and $0 < \tilde{h}^{-1}(\beta)<\infty$ holds $\mathscr{L}^{n}$-a.e.~in $\Omega$. Therefore, due to the continuity of $t/f'(t)$, we have on the one hand the pointwise convergence
\begin{equation*}
\sg u_{k} = \frac{A(\sg u_k) |\sg u_k|}{f'(|\sg u_k|)} = \frac{\alpha_{k} \tilde{h}^{-1}(\beta_k)}{f'(\tilde{h}^{-1}(\beta_{k}))}\to \frac{\alpha \tilde{h}^{-1}(\beta)}{f'(\tilde{h}^{-1}(\beta))} \qquad\mathscr{L}^{n}\text{-a.e.~in } \Omega \text{ as } k \to \infty.
\end{equation*}
On the other hand,~\eqref{eq:wbit} yields the existence of an increasing sequence $(\Omega_{j})_{j \in \N}$ of sets contained in~$\Omega$ with $\mathscr{L}^{n}(\Omega\setminus\Omega_{j})\to 0$ as $j\to\infty$ and such that $\sg u_{k}$ converges weakly to~$E$ as $k\to\infty$ on every~$\Omega_{j}$. Therefore, because of uniqueness of the limits, we can identify the pointwise limit $\alpha  \tilde{h}^{-1}(\beta)/ f'(\tilde{h}^{-1}(\beta)) = E$ as $\lebe^1(\Omega;\R^{N \times n})$ functions. In conclusion, we arrive at the convergence $\sg u_{k} \to E$ $\mathscr{L}^{n}$-a.e.~in~$\Omega$, which was the claim~\eqref{eq:pointwise0}. Moreover, once again by Fatou's lemma, combined with the uniform bound~\eqref{eq:unifbound1}, we also have the estimate
\begin{equation*}
\|E\|_{\lebe^{1}(\Omega;\R^{N \times n})}\leq \liminf_{k \to \infty}\|\sg u_{k}\|_{\lebe^{1}(\Omega;\R^{N \times n})}\leq C. \qedhere
\end{equation*}
\end{proof}

\subsection{Existence and Regularity for the Primal Problem}

We shall now use the a~priori estimates of the preceding sections to conclude that there exists a function $v\in\sobo^{1,1}(\Omega;\R^{N})$ such that~$E$ --~given by the biting limit~\eqref{eq:wbit} and which was just identified in Corollary~\ref{cor_pointwise_convergence} as the pointwise limit of the sequence $(\nabla u_k)_{k \in \N}$~-- satisfies
\begin{align}\label{eq:pointwise1}
\sg u_{k}\to E = \sg v\qquad\mathscr{L}^{n}\text{-a.e.~in }\Omega\quad\text{as } k \to \infty.
\end{align}

\begin{proof}[Proof of the representation $E=\sg v$]
We shall utilize the Poincar\'{e}-type Lemma~\ref{lem:Poinclemma} (applied to the $N$~component functions of $E$, each of them with values in~$\R^n$). Hence, in what follows, we want to prove that every function~$E^\alpha$, for $\alpha \in \{1,\ldots,N\}$, is curl-free in the sense of distributions, as introduced in Definition~\ref{def_curl_free}. This means that we need to show
\begin{equation}
\label{eq_representation_curl_free}
 \int_\Omega \big( E_i \partial_j \varphi - E_j \partial_i \varphi) \dif x = 0
\end{equation}
for any fixed test function $\varphi \in \hold^{1}_0(\Omega)$ and all choices of indices $i,j \in \{1,\ldots,n\}$. To this end, we set $K\coloneqq \spt(\varphi)$. We further consider a sequence of functions $(g_{\ell})_{\ell \in \N}$ in $\hold_{c}^{\infty}(\R;[0,1])$ with $g_{\ell} \equiv 1$ in $[-\ell,\ell]$, $g_{\ell}\equiv 0$ outside of $[-2\ell,2\ell]$ and $|g'_{\ell}|\leq 2 \ell^{-1}$ in $\R$, which allows us to estimate the above expression on sublevel sets of~$|E^\alpha|$. In fact, we may now rewrite the expression in~\eqref{eq_representation_curl_free} above as
\begin{multline*}
\left\vert \int_{\Omega} \big( E^\alpha_i \partial_j \varphi - E^\alpha_j \partial_i \varphi) \dif x\right\vert
  = \left\vert \int_{\Omega} g_{\ell}(|E|) \big( E^\alpha_i \partial_j \varphi - E^\alpha_j \partial_i \varphi) \dif x\right\vert \\
  + \left\vert \int_{\Omega} \big( 1 - g_{\ell}(|E|) \big) \big( E^\alpha_i \partial_j \varphi - E^\alpha_j \partial_i \varphi) \dif x \right\vert
  =: \mathbf{I}_{\ell}+\mathbf{II}_{\ell},
\end{multline*}
and noting that $E^\alpha \in\lebe^{1}(\Omega;\R^{n})$, we find
\begin{align*}
\lim_{\ell \to \infty} \mathbf{II}_{\ell} \leq 2 \sup_{K}|\nabla \varphi| \lim_{\ell \to\infty}\int_{\{|E|\geq \ell\}}|E^\alpha|\dif x = 0.
\end{align*}
Thus, it remains to show that we also have $\lim_{\ell \to \infty} \mathbf{I}_{\ell} = 0$. In order to prove this claim, we start by observing that, as a consequence of Lebesgue's dominated convergence theorem, the pointwise convergence $\sg u_{k} \to E$ established in Corollary~\ref{cor_pointwise_convergence} implies the strong convergence $g_{\ell}(|\nabla u_{k}|)\nabla u_{k}^\alpha \to g_{\ell}(|E|) E^\alpha$ in $\lebe^{1}(\Omega;\R^{n})$ as $k \to \infty$. Since by Lemma~\ref{lem:sobolevregularity} we have $u_{k} \in \sobo_{\locc}^{2,2}(\Omega;\R^{N})$ for every $k \in \N$, we may hence rewrite $\mathbf{I}_{\ell}$ by the integration by parts formula as
\begin{align*}
\mathbf{I}_{\ell} & = \lim_{k \to \infty} \bigg\vert\int_{\Omega} g_{\ell}(|\nabla u_{k}|) \big( \partial_i u^\alpha_k \partial_j \varphi - \partial_j u^\alpha_k \partial_i \varphi)  \dif x \bigg\vert \\
 & = \lim_{k \to \infty} \bigg\vert \int_{\Omega} \Big( \partial_j (g_{\ell}(|\nabla u_{k}|)) \partial_i u_{k}^\alpha - \partial_i (g_{\ell}(|\nabla u_{k}|)) \partial_j u_{k}^\alpha\Big) \varphi \dif x \\
 & \hspace{4cm} + \int_{\Omega} g_{\ell}(|\nabla u_{k}|) \big( \partial_j \partial_i u_{k}^\alpha - \partial_i \partial_j u_{k}^\alpha \big) \varphi \dif x \bigg\vert \\
 & = \lim_{k \to \infty} \bigg\vert \int_{\Omega} \Big( \partial_j (g_{\ell}(|\nabla u_{k}|)) \partial_i u_{k}^\alpha - \partial_i (g_{\ell}(|\nabla u_{k}|)) \partial_j u_{k}^\alpha\Big) \varphi \dif x \bigg\vert .
\end{align*}
We next introduce functions $G_{\ell}\colon\R_{0}^+ \to\R$ by
\begin{align*}
G_{\ell}(t)\coloneqq \int_{0}^{t}\frac{g'_{\ell}(\tau) \tau}{f'(\tau)}\dif \tau,\qquad \textrm{for } t \geq 0 \text{ and } \ell \in\mathbb{N}.
\end{align*}
Firstly, since~$f$ is strictly convex with $f'(0)=0$, we note that~$f'$ is monotonously increasing with $f'(t)>0$ for all $t > 0$. Consequently, the integrand in the definition of~$G_{\ell}$ is well-defined and supported in $[\ell,2\ell]\subset \R^+$, and we further have the estimate
\begin{align}\label{eq:Gbound}
|G_{\ell}(t)|\leq \frac{4 \ell}{\ell f'(\ell)} \int_{\ell}^{2\ell} 1 \dif \tau \leq \frac{4\ell}{f'(\ell)} \leq \frac{4 \ell}{f'(1)}  \qquad\text{for all } t>0 \text{ and } \ell \in\mathbb{N}.
\end{align}
Using
\begin{equation*}
\partial_j (g_\ell(|\nabla u_k|)) = \partial_j (G_\ell(|\nabla u_k|)) \frac{f'(|\nabla u_k|)}{|\nabla u_k|},
\end{equation*}
we may then express~$\mathbf{I}_{\ell}$ in terms of $G_{\ell}(|\nabla u_k|)) $ and apply once again the integration by parts formula (as well as the fact that $u_{k} \in \sobo_{\locc}^{2,2}(\Omega;\R^{N})$ holds for each $k \in \N$). In this way, we find
\begin{align*}
\mathbf{I}_{\ell} &  = \lim_{k \to \infty}\left\vert \int_{\Omega} \Big( \partial_{j}\big(G_{\ell}(|\nabla u_{k}|)\big) \partial_i u_{k}^\alpha - \partial_{i}\big(G_{\ell}(|\nabla u_{k}|)\big) \partial_j u_{k}^\alpha \Big) \frac{f'(|\nabla u_k|)}{|\nabla u_k|} \varphi \dif x \right\vert \\
 & \leq \lim_{k \to \infty}\left\vert \int_{\Omega} G_{\ell}(|\nabla u_{k}|) \bigg( \partial_{j} \Big( \frac{f'(|\nabla u_k|) \partial_i u_{k}^\alpha}{|\nabla u_k|} \Big)-  \partial_{i} \Big( \frac{f'(|\nabla u_k|) \partial_j u_{k}^\alpha}{|\nabla u_k|} \Big) \bigg) \varphi \dif x \right\vert\\
 & \quad + \lim_{k\to\infty}\left\vert\int_{\Omega}G_{\ell}(|\nabla u_{k}|)\big( \partial_i u_{k}^\alpha \partial_{j}\varphi- \partial_j u_{k}^\alpha \partial_{i}\varphi \big) \frac{f'(|\nabla u_k|)}{|\nabla u_k|} \dif x \right\vert .
\end{align*}
Recalling
\begin{equation*}
A(z) = \frac{f'(|z|) z}{|z|} \qquad \text{for all } z\in\R^{N\times n},
\end{equation*}
we next estimate~$\mathbf{I}_{\ell}$ in the more convenient form
\begin{align*}
\mathbf{I}_{\ell}
 & \leq \lim_{k\to\infty} \left\vert \int_{\Omega} \D_z \A(\sg u_{k}) \big[ \partial_j \sg u_{k}, G_{\ell}(|\nabla u_{k}|)  e^\alpha  \otimes e_i \big] \varphi \dif x \right\vert\\
 & \quad +  \lim_{k\to\infty} \left\vert \int_{\Omega} \D_z \A(\sg u_{k}) \big[ \partial_i \sg u_{k}, G_{\ell}(|\nabla u_{k}|) e^\alpha \otimes e_j \big] \varphi \dif x \right\vert\\
 & \quad + \lim_{k\to\infty} 2 \int_{\Omega} |G_{\ell}(|\nabla u_{k}|)| |f'(|\nabla u_{k}|)| |\nabla \varphi| \dif x,
\end{align*}
where $e_1,\ldots,e_n$ denote the standard unit basis vectors in~$\R^n$ and $e^1,\ldots,e^N$ the ones in~$\R^N$. Keeping in mind that $\D_z \A(z)$ is a positive definite, symmetric bilinear form, we infer from the Cauchy--Schwarz inequality
\begin{align*}
\lefteqn{ \left\vert \int_{\Omega} \D_z \A(\sg u_{k}) \big[ \partial_j \sg u_{k}, G_{\ell}(|\nabla u_{k}|) e^\alpha  \otimes e_i \big] \varphi \dif x \right\vert}\\
 & \leq  \bigg( \int_{\Omega} \D_z \A(\sg u_{k}) \big[ \partial_j \sg u_{k},\partial_j \sg u_{k} \big] |\varphi| \dif x \bigg)^{\frac{1}{2}} \\
 & \hspace{1cm} \times  \bigg( \int_{\Omega} \D_z \A(\sg u_{k}) \big[ G_{\ell}(|\nabla u_{k}|) e^\alpha  \otimes e_i , G_{\ell}(|\nabla u_{k}|) e^\alpha  \otimes e_i \big] |\varphi| \dif x \bigg)^{\frac{1}{2}}
\end{align*}
(and analogously with~$i$ replaced by~$j$). Thus, employing the a~priori estimate~\eqref{eq:maincoercive} from Lemma~\ref{lem:sobolevregularity} (note $K = \spt(\varphi) \Subset \Omega$), the upper bound in~\eqref{eq:growth_D_z_A}, the boundedness of~$f'$ by~$L$ and the growth~\eqref{eq:Gbound} as well as the support of~$G_\ell$, we arrive at
\begin{align*}
\mathbf{I}_{\ell} & \leq C \lim_{k \to \infty}  \bigg( \int_{\Omega} (1 + |\sg u_{k}|)^{-1} |G_{\ell}(|\nabla u_{k}|)|^2 |\varphi| \dif x \bigg)^{\frac{1}{2}} \\
 & \quad + C \lim_{k\to\infty} 2 \int_{\Omega} |G_{\ell}(|\nabla u_{k}|)| |\nabla \varphi| \dif x\\
 & \leq C \lim_{k\to\infty}\Phi\bigg( \int_{\{|\nabla u_{k}|\geq \ell\}} \ell \dif x \bigg),
\end{align*}
with $\Phi\colon\R_{0}^{+}\to\R_{0}^{+}$ given by $\Phi(t)\coloneqq \max\{t^{\frac{1}{2}},t\}$ and a constant~$C$ depending only on the data and~$\varphi$, but not on~$\ell$. Finally, the pointwise convergence $\sg u_{k} \to E$ allows us to pass to the limit $k \to \infty$, which yields
\begin{equation*}
 \mathbf{I}_{\ell} \leq C \Phi\bigg(\int_{\{|E|\geq \ell\}} |E| \dif x \bigg).
\end{equation*}
In view of the integrability of~$E$, this proves $\lim_{\ell \to \infty} \mathbf{I}_{\ell} = 0$. In conclusion, since $\alpha \in \{1,\ldots,N\}$ was arbitrary, we have shown the claim~\eqref{eq_representation_curl_free}, i.e., that $E\in\lebe^{1}(\Omega;\R^{N\times n})$ is curl-free in the sense of distributions. Thus, as~$\Omega$ is a simply connected Lipschitz domain, Lemma~\ref{lem:Poinclemma} provides a mapping $v\in\sobo^{1,1}(\Omega;\R^{N})$ with $\nabla v = E$, and the proof of the representation is complete.
\end{proof}

\begin{remark}
\label{rem:curl_free_general}
In case that~$\Omega$ is not simply connected, we still obtain that the pointwise limit of the sequence $(\nabla u_k)_{k \in \N}$ is curl-free in the sense of distributions, but we cannot identify it as the gradient of a $\sobo^{1,1}(\Omega;\R^N)$-function.
\end{remark}

For the sake of completeness, we now proceed by demonstrating that $v\in\sobo^{1,1}(\Omega;\R^{N})$ --~after translation by $(v)_\Omega$~-- is actually a solution to the system~\eqref{eq:PDE} subject to the Neumann condition~\eqref{eq:neumann}. To this end, we firstly provide the

\begin{proof}[Proof of the uniqueness assertion of Theorem~\ref{thm:main}]
We suppose that there exist two solutions $u_1, u_2 \in\sobo^{1,1}(\Omega;\R^{N})$ to the system~\eqref{eq:PDE} subject to~\eqref{eq:neumann}, with $(u_1)_\Omega= (u_2)_\Omega = 0$ and $u_1 \neq u_2$ as $\lebe^1(\Omega;\R^{N})$~functions, which, by connectedness of~$\Omega$, also implies $\nabla u_1 \neq \nabla u_2$ as $\lebe^1(\Omega;\R^{N \times n})$~functions. In view of Lemma~\ref{lemma:weaksolution_variational},~$u_1$ and~$u_2$ both solve the variational problem~\eqref{eq:varprin}, i.e., they both minimise~$\mathfrak{F}$ in $\sobo^{1,1}(\Omega;\R^{N})$. Choosing $(u_1+u_2)/2 \in \sobo^{1,1}(\Omega;\R^{N})$ as competitor, we deduce from the strict convexity of~$f$ combined with the minimality of~$u_1$ and~$u_2$
\begin{align*}
\mathfrak{F}\Big[\frac{u_1+u_2}{2}\Big] < \frac{1}{2} \big(\mathfrak{F}[u_1]+\mathfrak{F}[u_2]\big)=\inf_{\sobo^{1,1}(\Omega;\R^{N})}\mathfrak{F},
\end{align*}
which is a contradiction. Thus the proof of uniqueness is complete.
\end{proof}

We shall now conclude the proof of Theorem~\ref{thm:main} by the

\begin{proof}[Proof of the solution property of~$v - (v)_\Omega$]
By Corollary~\ref{cor_uniform_bound_uk}, we first note that $(u_{k})_{k \in \N}$ is a minimising sequence for $\mathfrak{F}$. Next, by the pointwise convergence~\eqref{eq:pointwise1} we obtain
\begin{equation*}
f(|\nabla u_{k}|)- T_{0} \cdot \nabla u_{k} \to f(|\nabla v|)- T_{0} \cdot \nabla v \qquad\mathscr{L}^{n}\text{-a.e.~in }\Omega\quad\text{as } k \to \infty.
\end{equation*}
By the coerciveness condition~\eqref{eq:coercivcond}, which in turn implies the boundedness of the map $z \mapsto f(|z|)- T_{0} \cdot z$ from below, we thus deduce by the generalised version of Fatou's Lemma
\begin{equation*}
\mathfrak{F}[v- (v)_\Omega] = \mathfrak{F}[v]\leq \liminf_{k\to\infty}\mathfrak{F}[u_{k}] = \inf_{\sobo^{1,1}(\Omega;\R^{N})}\mathfrak{F}.
\end{equation*}
In conclusion, we have shown that~$v - (v)_\Omega$ is a minimiser with vanishing mean value in~$\Omega$, and taking advantage of Lemma~\ref{lemma:weaksolution_variational}, it is also the desired weak solution to the system~\eqref{eq:PDE} subject to~\eqref{eq:neumann}. This completes the proof of Theorem~\ref{thm:main}.
\end{proof}

Finally, we note that the solution~$v - (v)_\Omega$ is precisely the function~$u$ from~\eqref{eq:BVconv}, namely the strong $L^1(\Omega;\mathbb{R}^{N})$- and weak-$*$ $\bv(\Omega;\mathbb{R}^N)$-limit of the minimising sequence $(u_{k})_{k \in \N}$.

\begin{corollary}
\label{cor_strong_convergence}
If the assumptions of Theorem~\ref{thm:main} are satisfied, then the minimising sequence $(u_k)_{k \in \N}$ constructed in Lemma~\ref{lemma_existence_uk}  converges to $v - (v)_\Omega$ strongly in $\sobo^{1,1}(\Omega)$.
\end{corollary}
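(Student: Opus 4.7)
The plan is to upgrade the a.e.~convergence $\sg u_k \to \sg v$ from Corollary~\ref{cor_pointwise_convergence} and the convergence of energies $\mathfrak{F}[u_k] \to \mathfrak{F}[v] = \inf_{\sobo^{1,1}(\Omega;\R^N)} \mathfrak{F}$ ensured by Corollary~\ref{cor_uniform_bound_uk} to strong $\lebe^1$-convergence of the gradients. Since $u_k \to v - (v)_\Omega$ already converges strongly in $\lebe^1(\Omega;\R^N)$ by~\eqref{eq:BVconv} (after identifying the weak-$*$ $\bv$-limit with $v - (v)_\Omega$ via uniqueness of the minimiser established earlier), this will yield the claimed strong $\sobo^{1,1}$-convergence.

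The main step will be to establish
\[
\|\sg u_k\|_{\lebe^1(\Omega;\R^{N \times n})} \to \|\sg v\|_{\lebe^1(\Omega;\R^{N \times n})}.
\]
The bound $\|\sg v\|_{\lebe^1} \leq \liminf_k \|\sg u_k\|_{\lebe^1}$ is immediate from Fatou's lemma applied to $|\sg u_k| \to |\sg v|$. For the reverse inequality on the $\limsup$, I would reuse the coerciveness estimate of Section~\ref{sec:coerciveness}: setting $\gamma \coloneqq \tfrac{1}{2}(f^\infty(1) - \|T_0\|_{\lebe^\infty(\Omega;\R^{N \times n})}) > 0$ and a suitable constant $C \geq 0$, the density $\psi(x,z) \coloneqq f(|z|) - T_0(x) \cdot z - \gamma |z| + C$ is nonnegative for a.e.~$x \in \Omega$ and all $z \in \R^{N \times n}$. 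Because $\psi$ is continuous in $z$, the sequence $\psi(\cdot, \sg u_k)$ is then a sequence of nonnegative integrands converging a.e.~to $\psi(\cdot, \sg v)$, so Fatou together with $\mathfrak{F}[u_k] \to \mathfrak{F}[v]$ rearranges to
\[
\mathfrak{F}[v] - \gamma \|\sg v\|_{\lebe^1} + C \mathscr{L}^n(\Omega) \leq \mathfrak{F}[v] - \gamma \limsup_{k \to \infty} \|\sg u_k\|_{\lebe^1} + C \mathscr{L}^n(\Omega),
\]
which yields $\limsup_k \|\sg u_k\|_{\lebe^1} \leq \|\sg v\|_{\lebe^1}$, as required.

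The remaining step is a standard Brezis--Lieb/Scheff\'e passage. The a.e.~convergence $\sg u_k \to \sg v$ together with the elementary pointwise bound $\bigl| |\sg u_k| - |\sg u_k - \sg v| \bigr| \leq |\sg v| \in \lebe^1(\Omega)$ permits an appeal to the dominated convergence theorem to conclude
\[
\int_\Omega \bigl( |\sg u_k| - |\sg u_k - \sg v| \bigr) \dif x \to \|\sg v\|_{\lebe^1}.
\]
Subtracting this from the norm convergence just established yields $\|\sg u_k - \sg v\|_{\lebe^1} \to 0$, completing the argument. The main obstacle lies entirely in Step~1: under merely linear growth, a.e.~convergence of gradients could a priori be accompanied by concentration of mass, and weak-$*$ $\bv$-lower semicontinuity only delivers the Fatou direction. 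It is precisely the strict inequality in the coerciveness assumption~\eqref{eq:coercivcond} that permits the shifted density $\psi$ to stay nonnegative and thereby allows Fatou to run in the opposite direction for the $\lebe^1$-norms of the gradients.
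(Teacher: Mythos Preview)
Your argument is correct and takes a genuinely different route from the paper's own proof. The paper argues via the Euler--Lagrange systems: it introduces the auxiliary functions $g_k \coloneqq \sqrt{f^\infty(1)|\sg u_k| - T_0 \cdot \sg u_k}$, uses the equations~\eqref{eq:approxel} for $u_k$ and~\eqref{eq:EL_system} for $v$ to compare $\|g_k\|_{\lebe^2}^2$ with $\|g\|_{\lebe^2}^2$, establishes uniform integrability of $\big(f^\infty(1) - f'(|\sg u_k|)\big)|\sg u_k|$ by a tail estimate, and then deduces uniform integrability of $(\sg u_k)$ from the pointwise domination $g_k^2 \geq (f^\infty(1) - \|T_0\|_{\lebe^\infty})|\sg u_k|$, concluding with Vitali. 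Your approach bypasses the Euler--Lagrange machinery entirely: you combine the a.e.\ convergence of gradients with the convergence of energies $\mathfrak{F}[u_k] \to \mathfrak{F}[v]$ and run Fatou on the shifted nonnegative density $\psi(x,z) = f(|z|) - T_0(x)\cdot z - \gamma|z| + C$, which immediately delivers $\limsup_k \|\sg u_k\|_{\lebe^1} \leq \|\sg v\|_{\lebe^1}$, after which a Brezis--Lieb step finishes the job. Your route is shorter and more robust in that it relies only on the variational structure (coerciveness plus energy convergence) rather than on the specific form of the first-order conditions; the paper's route, by contrast, makes the role of the equation and of the quantity $f^\infty(1) - f'(t)$ explicit, which may be of independent interest. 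One minor remark: your use of the identification $u = v - (v)_\Omega$ \emph{before} proving $\sg u_k \to \sg v$ in $\lebe^1$ is slightly circular as written; it is cleaner to observe that once the gradient convergence is in hand, Poincar\'e's inequality (together with $(u_k)_\Omega = 0$) forces $u_k \to v - (v)_\Omega$ in $\lebe^1$ directly, and the identification with the weak-$*$ limit $u$ from~\eqref{eq:BVconv} then follows by uniqueness of $\lebe^1$-limits.
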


\begin{proof}
Since $u_k$ has zero mean value over~$\Omega$ for each $k \in \N$, it is enough to prove that 
\begin{equation}\label{claim}
\sg  u_k \to \sg v \qquad \textrm{strongly in }  L^1(\Omega; \mathbb{R}^{N\times n}) \quad\text{as } k \to \infty.
\end{equation}
First, thanks to the assumption~\eqref{eq:coercivcond}, we can define functions
\begin{equation*}
g_k \coloneqq \sqrt {f^{\infty}(1)|\sg u_k| - T_0 \cdot \sg u_k}.
\end{equation*}
Then, using~\eqref{eq:wbit} and~\eqref{eq:pointwise1}, we observe that 
\begin{equation}\label{weak:z}
g_k \rightharpoonup g \coloneqq \sqrt {f^{\infty}(1)|\sg v| - T_0 \cdot \sg v} \qquad \textrm{weakly in } L^2(\Omega).
\end{equation}
Our first goal is to show that 
\begin{equation}\label{strong:z}
g_k \to g \qquad \textrm{strongly in } L^2(\Omega).
\end{equation}
For this purpose, we start by recalling two identities, namely by setting $\varphi \coloneqq u_k$ in the Euler--Lagrange system~\eqref{eq:approxel} for the approximate problem and by further using the fact that~$v$ is a weak solution to the Euler--Lagrange system~\eqref{eq:EL_system} with $\varphi \coloneqq v$ we obtain
\begin{equation*}
\int_{\Omega} \big[ A_k (\sg u_k) \cdot \sg u_k - T_0 \cdot \sg u_k \big] \dif x = 0 = \int_{\Omega} \big[  A (\sg v) \cdot \sg v - T_0 \cdot \sg v \big] \dif x
\end{equation*}
for each $k \in \N$. With these identities and the definitions of $A_k$ and $A$, respectively, it is straight forward to deduce 
\begin{align*}
&\limsup_{k\to \infty} \|g_k\|_{\lebe^2(\Omega)}^2 \\
&\le \limsup_{k\to \infty} \int_{\Omega} \big[ f^{\infty}(1)|\sg u_k| - T_0 \cdot \sg u_k + A_k(\sg u_k) \cdot \sg u_k- f'(|\sg u_k|)|\sg u_k| \big] \dif x\\
&= \limsup_{k\to \infty} \int_{\Omega} \big[ f^{\infty}(1)|\sg u_k| - T_0 \cdot \sg v + A(\sg v) \cdot \sg v - f'(|\sg u_k|)|\sg u_k| \big] \dif x\\
&=  \|g \|_{\lebe^2(\Omega)}^2 + \limsup_{k\to \infty} \int_{\Omega} \big[ f^{\infty}(1)|\sg u_k| - f'(|\sg u_k|)|\sg u_k|-f^{\infty}(1)|\sg v| + f'(|\sg v|)|\sg v| \big] \dif x.
\end{align*}
In addition, thanks to~\eqref{eq:pointwise1}, we also have 
\begin{equation*}
f^{\infty}(1)|\sg u_k| - f'(|\sg u_k|)|\sg u_k| \to f^{\infty}(1)|\sg v| - f'(|\sg v|)|\sg v| \quad\mathscr{L}^{n}\text{-a.e.~in $\Omega$  as } k \to \infty.
\end{equation*}
Thus, if the above sequence is uniformly integrable, then by the Vitali convergence theorem we get
\begin{equation*}
\limsup_{k\to \infty}  \|g_k\|_{\lebe^2(\Omega)}^2 \le  \|g\|_{\lebe^2(\Omega)}^2,
\end{equation*}
which together with~\eqref{weak:z} implies~\eqref{strong:z}. For proving uniform integrability, we fix $\varepsilon>0$ and determine $\lambda>0$ such that 
\begin{equation*}
f^{\infty}(1) - f'(\lambda) = \lim_{t\to \infty} f'(t) - f'(\lambda) \le \varepsilon. 
\end{equation*}
Then for every set $U\subset \Omega$ fulfilling $\mathscr{L}^{n}(U)\le \varepsilon/(f^{\infty}(1)\lambda)$, we obtain by monotonicity of~$f'$
\begin{align*}
\lefteqn{ \int_{U} \big[ f^{\infty}(1)|\sg u_k| - f'(|\sg u_k|)|\sg u_k| \big] \dif x } \\
&= \int_{U\cap \{|\sg u_k|\le \lambda\}} \big[ f^{\infty}(1)|\sg u_k| - f'(|\sg u_k|)|\sg u_k| \big] \dif x\\
&\qquad +\int_{U\cap \{|\sg u_k|> \lambda\}} \big[ f^{\infty}(1)|\sg u_k| - f'(|\sg u_k|)|\sg u_k| \big] \dif x \\
&\le f^{\infty}(1)\lambda \mathscr{L}^{n}(U) + \big( f^{\infty}(1) - f'(\lambda) \big) \int_{U}|\sg u_k|\dif x \le C\varepsilon,
\end{align*}
where we also used the a~priori bound~\eqref{eq:unifbound1}. Hence, we have uniform integrability and the proof of the strong convergence~\eqref{strong:z} is complete.  

Now, with $g_k \to g$ converging strongly in $\lebe^2(\Omega)$ as $k \to \infty$, the sequence $(g_k^2)_{k \in \N}$ is uniformly integrable and then, thanks to $g_k^2 \geq (f^\infty(1) - \| T_0 \|_{\lebe^\infty(\Omega;\mathbb{R}^{N \times n})}) | \sg u_k| >0$ for all $k \in \N$ because of~\eqref{eq:coercivcond}, also the sequence $(\sg u_k)_{k \in \N}$ is uniformly integrable. This together with the pointwise convergence~\eqref{eq:pointwise1} finishes the proof of the claim~\eqref{claim} and thus of the corollary.
\end{proof}
 
\section{Relaxation to $\bv$ and the Dual Problem}\label{sec:relaxedanddual}

The purpose of this section is to first recall the relaxed formulation of the minimisation problem~\eqref{eq:varprin}, namely the extension of the functional via semi-continuity to the space of functions of bounded variation, and the notion of generalised minimisers. Secondly, by means of convex conjugate functions in the sense of convex analysis, we introduce the dual problem associated to the (primal) minimisation problem~\eqref{eq:varprin} with an explicit description and then study its connection to the primal problem. In doing so, we shall adopt a more general viewpoint and hereafter let $F \colon \R^{N\times n}\to [0,\infty)$ a be convex, differentiable function that satisfies, for some constants $0<\nu\leq L<\infty$, the linear growth condition
\begin{align}\label{eq:generalisedlingrowth}
\nu|z| - L \leq F(z)\leq L(1+|z|)\qquad\text{for all } z\in\R^{N\times n}.
\end{align}
For a given map $T_{0} \in \lebe^\infty(\Omega;\R^{N\times n})$ we shall then study the variational problem
\begin{align}\label{eq:varprin_general}
\text{to minimise} \quad \mathcal{F}[w]\coloneqq \int_{\Omega} F(\nabla w)- T_{0} \cdot \nabla w \dif x \qquad \text{among all } w\in\sobo^{1,1}(\Omega;\R^{N}).
\end{align}
As for the radially symmetric case, we observe that \emph{if} a solution $u \in \sobo^{1,1}(\Omega;\R^N)$ to~\eqref{eq:varprin_general} exists, then it solves the associated Euler--Lagrange system
\begin{equation}
\label{eq:EL_general}
\int_\Omega \D_z F(\nabla u) \cdot \nabla \varphi \dif x = \int_\Omega T_{0} \cdot \nabla \varphi \dif x \qquad\text{for all } \varphi \in \sobo^{1,1}(\Omega;\R^N)
\end{equation}
and vice versa.

\subsection{Coerciveness}

As a modification of the coerciveness condition for radially symmetric integrands~\eqref{eq:coercivcond}, in this section we shall work with the condition
\begin{align}\label{eq:generalisedcoercivcond}
\essinf_{x \in \Omega} \min_{\xi \in \mathbb{S}^{N \times n -1}} \big\{ F^{\infty}(\xi) - T_0(x) \cdot \xi \big\} > 0
\end{align}
where the \emph{recession function} $F^{\infty}\colon\R^{N\times n}\to\R$ is given by
\begin{equation}
\label{eq:def_recession}
F^{\infty}(z) \coloneqq \lim_{t\searrow 0}tF\Big(\frac{z}{t}\Big) \qquad \text{for all } z\in\R^{N\times n}.
\end{equation}
We note that $F^\infty$ is strictly positive, finite-valued and convex, as a consequence of the linear growth condition and the convexity of~$F$, and hence, it attains its strictly positive minimum on $\mathbb{S}^{N \times n-1} = \{z \in \R^{N \times n} \colon |z|=1\}$. Also here the significance of condition~\eqref{eq:generalisedcoercivcond}, as previously for~\eqref{eq:coercivcond}, is to guarantee coerciveness of the functional~$\mathcal{F}$ in the following sense.

\begin{lemma}
\label{lemma:coerciveness_general}
Let $F \colon \R^{N\times n}\to [0,\infty)$ be a convex function satisfying~\eqref{eq:generalisedlingrowth} and let $T_{0}\in\lebe^{\infty}(\Omega;\R^{N\times n})$ verify~\eqref{eq:generalisedcoercivcond}. Then the functional $\mathcal{F}$ defined in~\eqref{eq:varprin_general} is coercive in the sense that if $(w_{k})_{k \in \N}$ is a sequence in $\sobo^{1,1}(\Omega;\R^{N})$ such that $\|w_{k}\|_{\sobo^{1,1}(\Omega;\R^{N})} \to \infty$ as $k\to\infty$ and each $w_{k}$ has vanishing mean value, then $\mathcal{F}[w_{k}]\to \infty$ as $k\to\infty$.
\end{lemma}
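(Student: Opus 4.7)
The plan is to mimic the argument given in Section~\ref{sec:coerciveness} for the radial case, replacing the role of $f^{\infty}(1)$ by the asymmetric recession function~$F^{\infty}$ and the role of the scalar monotonicity ``$f(t)/t \nearrow f^{\infty}(1)$'' by a uniform version of it over unit directions. Concretely, set
\begin{equation*}
 c \coloneqq \essinf_{x \in \Omega} \min_{\xi \in \mathbb{S}^{N \times n -1}} \big\{F^{\infty}(\xi) - T_0(x) \cdot \xi\big\} > 0.
\end{equation*}
Since $F^{\infty}$ is positively $1$-homogeneous, this immediately gives, for a.e.~$x \in \Omega$ and every $z \in \R^{N \times n}$ (with the trivial bound $0 \geq 0$ at $z=0$), the pointwise inequality
\begin{equation*}
 F^{\infty}(z) - T_0(x) \cdot z \,\geq\, c |z|.
\end{equation*}

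The main step will be the following uniform-at-infinity comparison between~$F$ and $F^{\infty}$: for every $\varepsilon > 0$ there exists $C_\varepsilon < \infty$ such that
\begin{equation}\label{eq:proof_plan_comparison}
 F(z) \,\geq\, F^{\infty}(z) - \varepsilon |z| - C_\varepsilon \qquad \text{for all } z \in \R^{N \times n}.
\end{equation}
To establish this, I would fix $\xi \in \mathbb{S}^{N \times n -1}$ and consider, as in the scalar analysis, the difference quotient $h_t(\xi) \coloneqq (F(t\xi) - F(0))/t$ for $t > 0$. By convexity of~$F$ it is non-decreasing in~$t$, and by the change of variables $s = 1/t$ in the definition~\eqref{eq:def_recession} it converges pointwise upwards to~$F^{\infty}(\xi)$ as $t \to \infty$. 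Since $\xi \mapsto h_t(\xi)$ is continuous and~$\mathbb{S}^{N \times n -1}$ is compact, Dini's theorem upgrades this monotone pointwise convergence to uniform convergence in~$\xi$. Rewriting the resulting inequality $h_t(\xi) \geq F^{\infty}(\xi) - \varepsilon$ for $t \geq T_\varepsilon$ in terms of $z = t\xi$ yields~\eqref{eq:proof_plan_comparison} on $\{|z| \geq T_\varepsilon\}$, while on $\{|z| \leq T_\varepsilon\}$ the bound holds trivially after enlarging~$C_\varepsilon$ by using $F \geq 0$ and $F^{\infty}(z) \leq L|z|$.

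With~\eqref{eq:proof_plan_comparison} at hand, the coerciveness is then almost immediate. For $w \in \sobo^{1,1}(\Omega;\R^{N})$ we estimate
\begin{equation*}
 \mathcal{F}[w] \,\geq\, \int_{\Omega} \big[ F^{\infty}(\nabla w) - T_0 \cdot \nabla w - \varepsilon |\nabla w| - C_\varepsilon \big] \dif x \,\geq\, (c - \varepsilon) \|\nabla w\|_{\lebe^{1}(\Omega;\R^{N \times n})} - C_\varepsilon \mathscr{L}^n(\Omega).
\end{equation*}
Choosing $\varepsilon = c/2$ and invoking the Poincar\'{e} inequality in the zero-mean version for the sequence $(w_k)_{k \in \N}$ in $\sobo^{1,1}(\Omega;\R^{N})$, we conclude that $\mathcal{F}[w_k] \geq (c/2) C_{\Omega}^{-1} \|w_k\|_{\sobo^{1,1}(\Omega;\R^{N})} - C$, and hence $\mathcal{F}[w_k] \to \infty$ whenever $\|w_k\|_{\sobo^{1,1}(\Omega;\R^{N})} \to \infty$.

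The only non-routine point is the uniform-in-direction comparison~\eqref{eq:proof_plan_comparison}. I expect this to be the main obstacle, though it dissolves once one recognises that the radial structure from Section~\ref{sec:coerciveness} is available along every ray, and that the continuity of $F$ and $F^{\infty}$ together with the monotone-in-$t$ convergence on the compact sphere triggers Dini's theorem.
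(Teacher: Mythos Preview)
Your proof is correct and follows the same overall architecture as the paper's: reduce coerciveness to a uniform-in-direction comparison $F(t\xi)/t \geq F^{\infty}(\xi) - \varepsilon$ for all $\xi \in \mathbb{S}^{N\times n-1}$ and $t$ large, then combine with the definition of~$c$ and Poincar\'e. The difference lies only in how this uniformity on the sphere is obtained. The paper proceeds by hand: it picks a finite $\delta$-net $(\xi_k)$ on the sphere, uses the monotone convergence along each of the finitely many rays to fix a single threshold $\ell_0$, and then transfers the estimate to arbitrary $\xi$ via the Lipschitz continuity of $F$ and $F^{\infty}$. You instead package this into Dini's theorem, observing that $t \mapsto (F(t\xi)-F(0))/t$ is continuous in $\xi$, non-decreasing in $t$, and converges pointwise to the continuous limit $F^{\infty}(\xi)$ on the compact sphere. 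Your route is shorter and conceptually cleaner; the paper's is more explicit about how the threshold depends on the data and avoids appealing to an auxiliary theorem. Both are entirely standard and lead to the same inequality~\eqref{eq:proof_plan_comparison}, so there is no substantive gap.
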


\begin{proof}
We initially observe that due to condition~\eqref{eq:generalisedcoercivcond} we may fix a number $\delta >0$ depending only on $F$ and $T_0$ such that
\begin{equation}
\label{eq_coerc_delta}
\essinf_{x \in \Omega} \min_{\xi \in \mathbb{S}^{N \times n -1}} \big\{ F^{\infty}(\xi) - T_0(x) \cdot \xi \big\} \geq 4 \delta
\end{equation}
is satisfied. We now consider an arbitrary function $w \in \sobo^{1,1}(\Omega;\R^{N})$ with $(w)_\Omega = 0$. In order to evaluate $\mathcal{F}[w]$, we decompose the domain of integration for some $\ell_0 \geq 1$ (to be determined later) as
\begin{align*}
\mathcal{F}[w] & = \int_{\Omega \cap \{|\nabla w|\leq \ell_0 \}} \big[ F(\nabla w)-  T_{0} \cdot \nabla w \big] \dif x + \int_{\Omega \cap \{ |\nabla w| > \ell_0 \}} \big[ F(\nabla w)- T_{0} \cdot \nabla w \big] \dif x \\
 &  \geq \int_{\Omega \cap \{|\nabla w|\leq \ell_0 \}} \big[ F(\nabla w)-  T_{0} \cdot \nabla w \big] \dif x \\
 & \quad +\int_{\Omega \cap \{|\nabla w| > \ell_0 \}}\bigg(\frac{1}{|\nabla w|} F\Big(|\nabla w|\frac{\nabla w}{|\nabla w|}\Big)- T_0 \cdot \frac{\nabla w}{|\nabla w|}- \delta \bigg) |\nabla w| \dif x\\
 & \quad + \delta \int_{\Omega \cap \{ |\nabla w| >\ell_0 \}} |\nabla w| \dif x =: \mathbf{I}+\mathbf{II}+\mathbf{III}.
\end{align*}
For the first term, we obtain via the growth condition~\eqref{eq:generalisedlingrowth}
\begin{align*}
|\mathbf{I}|\leq \big(C(1+\ell_0)+\ell_0\|T_{0}\|_{\lebe^{\infty}(\Omega;\R^{N\times n})}\big)\mathscr{L}^{n}(\Omega).
\end{align*}
We next show that the second term is non-negative, provided  that the level~$\ell_0$ is chosen suitably. To this end, we choose a finite number of points $(\xi_k)_{k \in \{1,\ldots,M\}}$ in~$\mathbb{S}^{N \times n-1}$ such that
\begin{equation*}
L_F \inf_{k \in \{1,\ldots,M\}} | \xi - \xi_k | \leq \delta \quad \text{for all } \xi \in \mathbb{S}^{N \times n-1},
\end{equation*}
where $L_F$ is a Lipschitz constant for both functions~$F$ and~$F^\infty$. Thus,~$M$ depends only on $n$, $N$, $\delta$ and~$F$. Taking into account that $\ell\mapsto F(\ell \xi )/\ell$ is monotonically increasing and converges to $F^{\infty}(\xi)$ as $\ell\nearrow\infty$ for each $\xi \in \mathbb{S}^{N \times n-1}$, we then determine $\ell_0 \geq 1$ such that
\begin{equation*}
\ell^{-1} F(\ell \xi_k )  \geq F^{\infty}(\xi_k) - \delta \qquad \text{for all } k \in \{1,\ldots,M\} \text{ and } \ell \geq \ell_0 \,.
\end{equation*}
Consequently, by the Lipschitz continuity of~$F$ and~$F^\infty$, we find
\begin{align*}
  \ell^{-1} F(\ell \xi) & \geq  \ell^{-1} \sup_{k \in \{1,\ldots,M\}} \big[ F(\ell \xi_k) - |F(\ell \xi) - F(\ell \xi_k)| \big] \\
    & \geq  \sup_{k \in \{1,\ldots,M\}} \big[  \ell^{-1}  F(\ell \xi_k) - L_F |\xi - \xi_k| \big] \\
    & \geq  \sup_{k \in \{1,\ldots,M\}}\big[  F^{\infty}(\xi) - \delta - 2L_F |\xi - \xi_k|  \big] \geq F^{\infty}(\xi) - 3 \delta
\end{align*}
for all $\xi \in \mathbb{S}^{N \times n-1}$ and $\ell \geq \ell_0$. Applying this inequality pointwisely with $\xi = \nabla w / |\nabla w|$ and keeping in mind the choice of~$\delta$ in~\eqref{eq_coerc_delta}, we thus arrive at $\mathbf{II} \geq 0$ as claimed. Finally, we observe
\begin{equation*}
\mathbf{III} \geq \delta \bigg[ \int_{\Omega}|\nabla w|\dif x -\ell_0 \mathscr{L}^{n}(\Omega)\bigg].
\end{equation*}
In conclusion, we have shown
\begin{equation*}
 \mathcal{F}[w] \geq \delta \int_{\Omega}|\nabla w|\dif x - C(F,T_0) \ell_0 \mathscr{L}^{n}(\Omega)
\end{equation*}
for all functions $w \in \sobo^{1,1}(\Omega;\R^{N})$ with $(w)_\Omega = 0$, and in combination with Poincar\'{e}'s inequality for $\sobo^{1,1}$-maps with vanishing mean value, this immediately implies the assertion of the lemma.
\end{proof}

\smallskip

\begin{remark}
\strut
\begin{enumerate}[font=\normalfont, label=(\roman{*}), ref=(\roman{*})]
 \item Relying on linear functions as in Examples~\ref{ex:nonexistence1} and~\ref{ex:nonexistence2} one shows optimality of condition~\eqref{eq:generalisedcoercivcond} concerning the existence of $\sobo^{1,1}$-minimisers for the Neumann problem for the functional~$\mathcal{F}$ \textup{(}and examples with unboundedness of~$\mathcal{F}$ from below when the expression in~\eqref{eq:generalisedcoercivcond} is strictly negative\textup{)}.
 \item However, since the minimisation problem~\eqref{eq:varprin_general} \textup{(}or~\eqref{eq:varprin}\textup{)} is formulated in terms of $\di T_0$ only, we indeed have coerciveness \textup{(}which then gives rise to existence results of generalised minimisers\textup{)} for all $T_{0}\in\lebe^{\infty}(\Omega;\R^{N\times n})$ such that there exists $\widetilde{T}_{0}\in\lebe^{\infty}(\Omega;\R^{N\times n})$ which verifies~\eqref{eq:generalisedcoercivcond} and
 \begin{equation*}
  \int_\Omega  T_{0} \cdot \nabla w \dif x = \int_\Omega \widetilde{T}_{0} \cdot \nabla w \dif x \qquad\text{for all } w \in \sobo^{1,1}(\Omega;\R^N).
 \end{equation*}
\end{enumerate}
\end{remark}

\subsection{Relaxation of the Primal Problem}
As mentioned in the introduction, the lack of weak compactness of bounded sets in the non-reflexive space $\sobo^{1,1}(\Omega;\R^{N})$ suggests the passage to a space that enjoys better compactness properties. The natural candidate for such a space is given by $\bv(\Omega;\R^{N})$, the space of functions of \emph{bounded variation}. We say that a measurable mapping $w \colon \Omega\to\R^{N}$ belongs to $\bv(\Omega;\R^{N})$ if and only if $w\in\lebe^{1}(\Omega;\R^{N})$ and its distributional gradient can be represented by a finite $\R^{N\times n}$-valued Radon measure on $\Omega$, in symbols $\D w\in\mathcal{M}(\Omega;\R^{N\times n})$. Let us note that by the Riesz representation theorem for Radon measures, the latter conditions amounts to requiring
\begin{align*}
|\D w|(\Omega) = \sup\left\{\int_{\Omega} w \cdot \di(\varphi) \dif x\colon\;\varphi\in\hold_{c}^{1}(\Omega;\R^{N\times n}),\;|\varphi|\leq 1\right\}<\infty,
\end{align*}
In this case, we denote by $\nabla w \mathscr{L}^{n}$ the absolutely continuous and by $\D^s w$ the singular part in the Lebesgue decomposition of $\D w$ with respect to the Lebesgue measure $\mathscr{L}^{n}$. However, let us emphasize that $\nabla w$ is simply the density of the absolutely continuous part of $\D w$, but in general, it is \emph{not} the gradient of a $\sobo^{1,1}(\Omega;\R^N)$-function.

The relevant notions of convergences in $\bv(\Omega;\R^N)$ are those of weak-$\ast$ and of strict convergence, both being weaker than norm convergence:

\begin{definition}
Let  $(w_{k})_{k \in \N}$ be a sequence in $\bv(\Omega;\R^{N})$ and $w \in \bv(\Omega;\R^{N})$. We say that $(w_k)_{k \in \N}$ \emph{converges weakly-$\ast$} to~$w$ in $\bv(\Omega;\R^{N})$, in symbols $w_{k} \wstar w$, if $(w_{k})_{k \in \N}$ converges strongly to~$w$ in $\lebe^{1}(\Omega;\R^{N})$ and if $(\D w_{k})_{k \in \N}$ converges to $\D w$ on~$\Omega$ in the weak-$\ast$-sense for Radon measures as $k\to\infty$, i.e.,
\begin{equation*}
 \lim_{k \to \infty} \int_\Omega \varphi \dif \D w_k = \int_\Omega \varphi \dif \D w \qquad \text{for all } \varphi \in \hold_0(\Omega).
\end{equation*}
We further say that $(w_k)_{k \in \N}$ \emph{converges strictly} to~$w$ in $\bv(\Omega;\R^{N})$ if $(w_{k})_{k \in \N}$ converges strongly to~$w$ in $\lebe^{1}(\Omega;\R^{N})$ and if the variations $|\D w_{k}|(\Omega)$ converge to $|\D w|(\Omega)$ as $k\to\infty$.
\end{definition}

Most importantly for us, we have the following characterization of weak-$\ast$-convergence that a sequence $(w_{k})_{k \in \N}$ converges weakly-$\ast$ in $\bv(\Omega;\R^{N})$ if and only if it is bounded in $\bv(\Omega;\R^{N})$ and strongly convergent in $\lebe^1(\Omega;\R^N)$. Moreover, the space $(\bv\cap\hold^{\infty})(\Omega;\R^{N})$ is dense in $\bv(\Omega;\R^{N})$ with respect to strict (and thus also with respect to weak-$\ast$) convergence. For this and further results on the space $\bv$ we refer the reader to the monographs~\cite{AMBFUSPAL99,EVANSGARIEPY}.

In what follows we consider $T_{0}\in\hold_b(\Omega;\R^{N\times n})$ and assume for the functional~$\mathcal{F}$ defined in~\eqref{eq:varprin_general} the mild coerciveness condition
\begin{equation}
\label{eq:generalisedcoercivcond_weak}
\essinf_{x \in \Omega} \min_{\xi \in \mathbb{S}^{N \times n -1}} \big\{ F^{\infty}(\xi) - T_0(x) \cdot \xi \big\} \geq 0
\end{equation}
 (i.e., in contrast to the previous coerciveness condition~\eqref{eq:generalisedcoercivcond}, also equality is allowed), which excludes~$\mathcal{F}$ to be unbounded from below. In this situation we extend~$\mathcal{F}$, which a priori is defined only on $\sobo^{1,1}(\Omega;\R^{N})$, by lower semicontinuity to the larger space $\bv(\Omega;\R^{N})$. The resulting relaxed functional is given by
\begin{equation*}
\overline{\mathcal{F}}[w] \coloneqq \inf \Big\{ \liminf_{k \to \infty} {\mathcal F}[w_k] \colon (w_k)_{k \in \N} \text{ in } \sobo^{1,1}(\Omega,\R^N) \text{ with } w_k \wstar w \text{ in } \bv(\Omega;\R^N) \Big\}
\end{equation*}
for $w \in\bv(\Omega;\R^{N}$). We now introduce the concept of generalised minimisers:

\begin{definition}
\label{def:bv_minimiser}
Let $F \colon \R^{N\times n}\to [0,\infty)$ be a convex function satisfying~\eqref{eq:generalisedlingrowth} and let $T_{0}\in\hold_b(\Omega;\R^{N\times n})$. We call a function $u \in \bv(\Omega;\R^N)$ \emph{generalised minimiser} of the functional~$\mathcal{F}$ if~$u$ is a minimiser of the relaxed functional~$\overline{\mathcal{F}}$ in $\bv(\Omega;\R^N)$, i.e.,
\begin{equation*}
 \overline{\mathcal{F}}[u] \leq \overline{\mathcal{F}}[w] \qquad \text{for all } w \in \bv(\Omega;\R^N).
\end{equation*}
\end{definition}

We next provide a representation formula for the relaxed functional~$\overline{\mathcal{F}}$ (with the classical approach as employed for the Dirichlet problem), prove that the original minimisation problem~\eqref{eq:varprin} and the minimisation of the relaxed functional $\overline{\mathcal F}$ in fact lead to the same value and we also justify the name ``generalised minimiser''.

\begin{proposition}
\label{prop:characterisation_gen_minimiser}
Let $F \colon \R^{N\times n}\to [0,\infty)$ be a convex function satisfying~\eqref{eq:generalisedlingrowth} and let $T_{0}\in\hold_b(\Omega;\R^{N\times n})$ verify~\eqref{eq:generalisedcoercivcond_weak}. Then we have the representation formula
\begin{equation}\label{eq:rep}
 \overline{\mathcal{F}}[w] = \int_{\Omega} F(\nabla w) \dif x + \int_{\Omega} F^{\infty}\Big(\frac{\dif \D^{s}w}{\dif |\D^{s}w|}\Big)\dif |\D^{s}w| - \int_\Omega T_0 \cdot \dif \D w
 \end{equation}
for all $w \in\bv(\Omega;\R^{N})$ with corresponding Lebesgue-Radon-Nikod\v{y}m decomposition $\D w=\nabla w\mathscr{L}^{n}\res\Omega+\D^{s}w$. Here, $F^{\infty}$ is the recession function defined in~\eqref{eq:def_recession}. Moreover, there holds
\begin{equation}
\label{eq:mainrelations}
\inf_{\bv(\Omega;\R^N)} \overline{\mathcal{F}} = \inf_{\sobo^{1,1}(\Omega;\R^{N})}\mathcal{F},
\end{equation}
and a function $u \in \bv(\Omega;\R^N)$ is a generalised minimiser of~$\mathcal{F}$ if and only if $u$ is the weak-$\ast$ limit of a minimising sequence $(u_k)_{k \in \N}$ for~$\mathcal{F}$ in $\sobo^{1,1}(\Omega;\R^N)$.
\end{proposition}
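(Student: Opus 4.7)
The plan is to establish the three assertions in sequence: the representation formula~\eqref{eq:rep} is the core result, and both~\eqref{eq:mainrelations} and the characterisation of generalised minimisers then follow from it by soft arguments. For~\eqref{eq:rep}, I would adopt the by-now classical approach via Reshetnyak's lower semicontinuity and continuity theorems for convex functionals of measures of linear growth.

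The first step is to record properties of the auxiliary functional
\[
 \mathcal{G}[\mu] \coloneqq \int_\Omega F\Big(\tfrac{\dif\mu^{ac}}{\dif\mathscr{L}^n}\Big)\dif x + \int_\Omega F^\infty\Big(\tfrac{\dif\mu^s}{\dif|\mu^s|}\Big)\dif|\mu^s|
\]
on $\meas(\Omega;\R^{N\times n})$, namely that $\mathcal{G}$ is weak-$\ast$ lower semicontinuous and continuous with respect to strict convergence of measures. In parallel, the linear term $w\mapsto \int_\Omega T_0\cdot\dif\D w$ is weak-$\ast$ continuous on $\bv(\Omega;\R^N)$, since $T_0\in\hold_b(\Omega;\R^{N\times n})$ pairs continuously against the underlying measures. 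To obtain the lower bound in~\eqref{eq:rep}, I would take any admissible sequence $(w_k)\subset\sobo^{1,1}(\Omega;\R^N)$ with $w_k\wstar w$ in $\bv(\Omega;\R^N)$ and apply lower semicontinuity of $\mathcal{G}$ together with continuity of the linear term to $\mathcal{F}[w_k]=\mathcal{G}[\nabla w_k\mathscr{L}^n]-\int_\Omega T_0\cdot\nabla w_k\dif x$. For the matching upper bound I would mollify $w\in\bv(\Omega;\R^N)$ to obtain $w_\varepsilon\in(\sobo^{1,1}\cap\hold^\infty)(\Omega;\R^N)$ converging strictly to $w$ in $\bv(\Omega;\R^N)$, and pass to the limit in $\mathcal{F}[w_\varepsilon]$ invoking Reshetnyak continuity.

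The identity~\eqref{eq:mainrelations} follows at once: on $\sobo^{1,1}(\Omega;\R^N)$ the singular part vanishes, so the representation formula yields $\overline{\mathcal{F}}=\mathcal{F}$ there, and the inequalities $\inf_\bv\overline{\mathcal{F}}\le\inf_{\sobo^{1,1}}\mathcal{F}\le\overline{\mathcal{F}}[w]$ (the second one using the mollification recovery sequence for arbitrary $w\in\bv(\Omega;\R^N)$) combine to equality. For the characterisation of generalised minimisers I argue as follows: if $u$ is a generalised minimiser, the definition of $\overline{\mathcal{F}}$ (combined with a diagonal argument) provides a sequence $(u_k)\subset\sobo^{1,1}(\Omega;\R^N)$ with $u_k\wstar u$ in $\bv(\Omega;\R^N)$ and $\mathcal{F}[u_k]\to\overline{\mathcal{F}}[u]=\inf_\bv\overline{\mathcal{F}}=\inf_{\sobo^{1,1}}\mathcal{F}$, so that $(u_k)$ is minimising for $\mathcal{F}$; conversely, if $(u_k)\subset\sobo^{1,1}(\Omega;\R^N)$ is minimising for $\mathcal{F}$ with $u_k\wstar u$ in $\bv(\Omega;\R^N)$, the lower bound in~\eqref{eq:rep} yields $\overline{\mathcal{F}}[u]\le\liminf_k\mathcal{F}[u_k]=\inf_{\sobo^{1,1}}\mathcal{F}=\inf_\bv\overline{\mathcal{F}}$, whence $u$ is a generalised minimiser.

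The main obstacle I anticipate is the rigorous justification of the continuity and semicontinuity properties of $\mathcal{G}$ at the interface between the absolutely continuous and singular parts of $\D w$. The standard device to unify the two regimes is to realise $\mathcal{G}[\D w]$ as the integral of the positively $1$-homogeneous convex integrand $\widetilde F(t,z)\coloneqq tF(z/t)$ for $t>0$, $\widetilde F(0,z)\coloneqq F^\infty(z)$, against the product measure $\lambda_w\coloneqq (\mathscr{L}^n\res\Omega,\D w)\in\meas(\Omega;\R\times\R^{N\times n})$, and to observe that strict $\bv$-convergence $w_\varepsilon\to w$ corresponds to strict convergence $\lambda_{w_\varepsilon}\to\lambda_w$ of the associated measures. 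Once this lifting is in place, the classical Reshetnyak theorems for $1$-homogeneous integrands apply directly, and the remaining manipulations involving the bounded continuous term $T_0\cdot\D w$ and the diagonal extraction in the last step become routine.
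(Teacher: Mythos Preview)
Your overall strategy coincides with the paper's: both prove~\eqref{eq:rep} by lifting to the measure $\lambda_w=(\mathscr{L}^n,\D w)$ and invoking Reshetnyak lower semicontinuity for the $\liminf$ bound and Reshetnyak continuity along a mollified (area-strictly convergent) recovery sequence for the $\limsup$ bound; the consequences~\eqref{eq:mainrelations} and the characterisation of generalised minimisers are then deduced exactly as you indicate.

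There is, however, one genuine gap. Your claim that the linear term $w\mapsto\int_\Omega T_0\cdot\dif\D w$ is weak-$\ast$ continuous on $\bv(\Omega;\R^N)$ is false for $T_0\in\hold_b(\Omega;\R^{N\times n})$: weak-$\ast$ convergence of $\D w_k$ tests only against $\hold_0(\Omega)$, and mass can escape to $\partial\Omega$. For instance, on $\Omega=(0,1)$ the sequence $w_k(x)=\min(kx,1)$ satisfies $w_k\wstar 1$ in $\bv(0,1)$, yet $\int_0^1 1\cdot\dif\D w_k=1$ for all $k$ while $\int_0^1 1\cdot\dif\D 1=0$. So you cannot split off the $T_0$-term as continuous and treat only $\mathcal{G}[\D w]$ with Reshetnyak; the $\liminf$ inequality for the full functional does not follow from your argument.

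The paper circumvents this by \emph{not} separating the linear term: it builds $T_0$ directly into the $x$-dependent perspective integrand
\[
G(x,t,z)=\begin{cases} tF(z/t)-T_0(x)\cdot z & t>0,\\ F^\infty(z)-T_0(x)\cdot z & t=0,\end{cases}
\]
and applies Reshetnyak to $G$ and $\lambda_w$. The point is that the weak coerciveness hypothesis~\eqref{eq:generalisedcoercivcond_weak} is precisely the condition $G(x,0,z)\geq 0$, which is what guarantees that mass escaping to the boundary cannot lower the functional. This is where~\eqref{eq:generalisedcoercivcond_weak} enters the proof --- in your decoupled approach it plays no role in the lower bound, which should already signal that something is missing. (A minor related point: strict $\bv$-convergence does not in general imply strict convergence of $\lambda_{w_\varepsilon}$ to $\lambda_w$; one needs the stronger area-strict convergence, which the standard mollification does provide.)
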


\begin{proof}
Let us denote by~${\mathcal G}[w]$ the right-hand side of~\eqref{eq:rep}. We initially observe from the lower semicontinuity and the continuity part of Reshetnyak's Theorem~\ref{lem:reshetnyak}, respectively, that we have
\begin{equation}
\label{eq_rep_proof_1}
 \mathcal{G}[w] \leq  \liminf_{k \to \infty} \mathcal{G}[w_k] = \liminf_{k \to \infty} \mathcal{F}[w_k] 
\end{equation}
for all sequences $(w_k)_{k \in \N}$ in $\sobo^{1,1}(\Omega,\R^N)$ with $ w_k \wstar w$ in $\bv(\Omega;\R^N)$ and 
\begin{equation}
\label{eq_rep_proof_2}
 \mathcal{G}[w] = \lim_{k \to \infty} \mathcal{G}[w_k] = \lim_{k \to \infty} \mathcal{F}[w_k]  
\end{equation}
for all sequences $(w_k)_{k \in \N}$ in $\sobo^{1,1}(\Omega,\R^N)$ with $ w_k \wstar w$ in $\bv(\Omega;\R^N)$ and $|(\mathscr{L}^{n}, \D w_k)|(\Omega) \to |(\mathscr{L}^{n}, \D w)|(\Omega)$. Here, we have used Remark~\ref{rem_Reshetnyak_application} to apply Reshetnyak to the functional~$\mathcal{G}$ and also the fact that~$\mathcal{G}$ and~$\mathcal{F}$ coincide on $\sobo^{1,1}(\Omega,\R^N)$.

We will first prove that $\overline{\mathcal{F}}[w]=\mathcal{G}[w]$ holds for every fixed $w \in \bv(\Omega;\R^{N})$. Noting that inequality~\eqref{eq_rep_proof_1} is valid for any sequence $(w_k)_{k \in \N}$ in $\sobo^{1,1}(\Omega,\R^N)$ such that $w_k \wstar w$ in $\bv(\Omega;\R^N)$ as $k \to \infty$, we may pass to the infimum of the right-hand side of~\eqref{eq_rep_proof_1} over these approximating sequences, and we find
\begin{equation*}
 \mathcal{G}[w] \leq \overline{\mathcal{F}}[w].
\end{equation*}
To obtain the reverse inequality, we choose (e.g.~by mollification of a trace-preserving extension of~$w$), a sequence $(w_{k})_{k \in \N}$ in $\sobo^{1,1}(\Omega;\R^{N})$ with $w_k \wstar w $ in $\bv(\Omega;\R^N)$ and with $|(\mathscr{L}^{n}, \D w_k)|(\Omega) \to |(\mathscr{L}^{n}, \D w)|(\Omega)$. Then, by identity~\eqref{eq_rep_proof_2}, we get
\begin{equation}
\label{eq_rep_proof_3}
 \mathcal{G}[w]  = \lim_{k \to \infty} \mathcal{F}[w_k] \geq \overline{\mathcal{F}}[w]
\end{equation}
which concludes the proof of the representation formula~\eqref{eq:rep}.

In order to demonstrate that the two infima in~\eqref{eq:mainrelations} coincide, we first notice from~\eqref{eq_rep_proof_3} that
\begin{equation*}
 \overline{\mathcal{F}}[w] = \mathcal{G}[w] = \lim_{k \to \infty} \mathcal{F}[w_{k}] \geq \inf_{\sobo^{1,1}(\Omega;\R^{N})} \mathcal{F}
\end{equation*}
for arbitrary $w \in\bv(\Omega;\R^{N})$ (and the sequence $(w_{k})_{k \in \N}$ with $w_k \wstar w $ in $\bv(\Omega;\R^N)$ and with $|(\mathscr{L}^{n}, \D w_k)|(\Omega) \to |(\mathscr{L}^{n}, \D w)|(\Omega)$ as above). Passing to the infimum of~$\overline{\mathcal{F}}$ over $w \in \bv(\Omega;\R^N)$ and keeping in mind that~$\mathcal{F}$ and~$\overline{\mathcal{F}}$ coincide on $\sobo^{1,1}(\Omega;\R^{N}) \subset \bv(\Omega;\R^N)$, we thus arrive at
\begin{equation*}
\inf_{\bv(\Omega;\R^N)} \overline{\mathcal{F}} \geq \inf_{\sobo^{1,1}(\Omega;\R^{N})} \mathcal{F} \geq \inf_{\bv(\Omega;\R^N)} \overline{\mathcal{F}},
\end{equation*}
and the claim~\eqref{eq:mainrelations} follows.

Finally, we prove the characterization of generalised minimisers. Given an arbitrary generalised minimiser $u \in \bv(\Omega;\R^N)$ of~$\mathcal{F}$, we see as above that $u$ is the weak-$\ast$ limit of a sequence $(u_k)_{k \in \N}$ in $\sobo^{1,1}(\Omega;\R^N)$ (and with $|(\mathscr{L}^{n}, \D u_k)|(\Omega) \to |(\mathscr{L}^{n}, \D u)|(\Omega)$). Thus, as a consequence of~\eqref{eq_rep_proof_2}, the fact that~$u$ minimises $\mathcal{G} = \overline{\mathcal{F}}$ in $\bv(\Omega;\R^N)$ and the identity~\eqref{eq:mainrelations}, we infer that $(u_k)_{k \in \N}$ in $\sobo^{1,1}(\Omega;\R^N)$ is indeed a minimising sequence for~$\mathcal{F}$ in $\sobo^{1,1}(\Omega;\R^N)$. For the reverse implication let $(u_k)_{k \in \N}$ be a minimising sequence for~$\mathcal{F}$ in $\sobo^{1,1}(\Omega;\R^N)$ that converges weakly-$\ast$ to a function $u \in \bv(\Omega;\R^N)$. Then, by~\eqref{eq_rep_proof_1} and once again identity~\eqref{eq:mainrelations}, we deduce that $u$ is indeed a minimiser of~$\mathcal{G} = \overline{\mathcal{F}}$ in $\bv(\Omega;\R^N)$, i.e.,~$u$ is a generalised
minimiser of~$\mathcal{F}$. This finishes the proof of the proposition.
\end{proof}

Concerning generalised minimisers of~$\mathcal{F}$, we next wish to continue the discussion of the coerciveness condition on~$T_0$, which was started in Example~\ref{ex:nonexistence1}, by showing that it remains an essential ingredient for a positive existence result:

\begin{example}[Example~\ref{ex:nonexistence1}, continued]\label{ex:nonexistence3}
In the situation of Example~\ref{ex:nonexistence1}, observe that $\overline{\mathcal{F}}$ for $\mathcal{F}=\mathfrak{F}$ is given by
\begin{equation*}
\overline{\mathcal{F}}[w] =\int_{-1}^{1} \big[ \sqrt{1+|w'|^{2}} -1  -w' \big] \dif x + \int_{(-1,1)} \bigg[ \Big\vert\frac{\dif \D^{s}w}{\dif |\D^{s}w|}\Big\vert-\frac{\dif \D^{s}w}{\dif |\D^{s}w|} \bigg] \dif |\D^{s}w| 
\end{equation*}
for $w \in \bv((-1,1))$, where now $\D w= w'\mathscr{L}^{1}\res(-1,1)+\D^{s}w$ is the Lebesgue decomposition of~$\D w$.
From Example~\ref{ex:nonexistence1} and identity~\eqref{eq:mainrelations} we deduce $\inf_{\bv((-1,1))}\overline{\mathcal{F}}=-2$. In this case, the minimising sequence $(u_k)_{k\in \N}$ with $u_k = kx$ for $k \in \N$ is not uniformly bounded in $\sobo^{1,1}((-1,1))$ \textup{(}and admits no subsequence converging weakly-$\ast$ in $\bv((-1,1))$\textup{)}. In fact, there exists no generalised minimiser of~${\mathcal F}$, i.e.,~a function $u \in \bv((-1,1))$ with $\overline{\mathcal{F}}[u]=2$. Otherwise, this would mean
\begin{align*}
\int_{-1}^{1} \big( \sqrt{1+|u'|^{2}}-u' \big) \dif x = - \int_{(-1,1)} \bigg[ \Big\vert\frac{\dif \D^{s}u}{\dif |\D^{s}u|}\Big\vert-\frac{\dif \D^{s}u}{\dif |\D^{s}u|} \bigg]\dif |\D^{s}u|
\end{align*}
Now, since the left-hand side is non-negative due to $\sqrt{1+|\cdot|^{2}}\geq |\cdot|$ and since the right-hand side is non-positive, both terms actually need to vanish in order to achieve equality. We thus conclude $\sqrt{1+|u'|^{2}}-u'\equiv 0$ a.e.~in $(-1,1)$ \textup{(}as before in Example~\ref{ex:nonexistence1}\textup{)}, which yields a contradiction and shows that such a function~$u$ cannot exist.
\end{example}

\begin{proposition}\label{prop:existence_relaxed}
Let $F \colon \R^{N\times n}\to [0,\infty)$ be a convex function satisfying~\eqref{eq:generalisedlingrowth} and let $T_{0}\in\hold_b(\Omega;\R^{N\times n})$ verify~\eqref{eq:generalisedcoercivcond}. Then there exists a generalised minimiser $u \in\bv(\Omega;\R^{N})$ of~$\mathcal{F}$.
\end{proposition}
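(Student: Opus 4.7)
The plan is to apply the direct method of the calculus of variations directly on the level of $\sobo^{1,1}$ and then invoke the characterisation of generalised minimisers from Proposition~\ref{prop:characterisation_gen_minimiser}.

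First, I would fix any minimising sequence $(w_k)_{k\in\N}$ for $\mathcal{F}$ in $\sobo^{1,1}(\Omega;\R^{N})$, the existence of which follows from the fact that the infimum is finite: $\mathcal{F}[0]=F(0)\mathscr{L}^n(\Omega)<\infty$, while $\mathcal{F}$ is bounded from below on $\sobo^{1,1}(\Omega;\R^{N})$ by the coerciveness condition~\eqref{eq:generalisedcoercivcond} (which implies \eqref{eq:generalisedcoercivcond_weak}, hence $\mathcal{F}\geq 0$ up to a constant depending only on $F$, $T_0$ and $|\Omega|$ via the same decomposition as in the proof of Lemma~\ref{lemma:coerciveness_general}). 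Since both the functional $\mathcal{F}$ and the class $\sobo^{1,1}(\Omega;\R^{N})$ are invariant under the addition of constants, we may replace $w_k$ by $w_k-(w_k)_\Omega$ and thereby assume that $(w_k)_\Omega=0$ for every $k\in\N$ without changing the values $\mathcal{F}[w_k]$.

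Next, I would use Lemma~\ref{lemma:coerciveness_general} to infer that $(w_k)_{k\in\N}$ is bounded in $\sobo^{1,1}(\Omega;\R^{N})$: otherwise, along a subsequence one would have $\|w_k\|_{\sobo^{1,1}(\Omega;\R^{N})}\to\infty$ and hence $\mathcal{F}[w_k]\to\infty$, contradicting the minimising property. In particular, $(w_k)_{k\in\N}$ is bounded in $\bv(\Omega;\R^{N})$. By the compact embedding $\bv(\Omega;\R^{N})\hookrightarrow\lebe^{1}(\Omega;\R^{N})$ together with the standard weak-$*$ compactness in $\bv$ (i.e.~Banach--Alaoglu applied to the dual space $\mathcal{M}(\Omega;\R^{N\times n})$ combined with Rellich), there exist a non-relabelled subsequence and a function $u\in\bv(\Omega;\R^{N})$ such that
\begin{equation*}
w_k\to u\quad\text{in }\lebe^{1}(\Omega;\R^{N}),\qquad \D w_k\wstar \D u\quad\text{in }\mathcal{M}(\Omega;\R^{N\times n}),
\end{equation*}
as $k\to\infty$, that is, $w_k\wstar u$ in $\bv(\Omega;\R^{N})$.

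Finally, I would invoke the characterisation part of Proposition~\ref{prop:characterisation_gen_minimiser}: since $(w_k)_{k\in\N}$ is a minimising sequence for $\mathcal{F}$ in $\sobo^{1,1}(\Omega;\R^{N})$ that converges weakly-$*$ in $\bv(\Omega;\R^{N})$ to $u$, the limit $u$ is a generalised minimiser of $\mathcal{F}$ in the sense of Definition~\ref{def:bv_minimiser}, which completes the proof.

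There is no serious obstacle in this scheme; the only point that requires a small amount of care is the reduction to a minimising sequence with vanishing mean value (so that the coerciveness of Lemma~\ref{lemma:coerciveness_general} actually applies), but this is immediate from the translation invariance of $\mathcal{F}$. All heavy lifting concerning the representation of $\overline{\mathcal{F}}$ and the identification of weak-$*$ limits of minimising sequences with generalised minimisers has already been performed in Proposition~\ref{prop:characterisation_gen_minimiser}.
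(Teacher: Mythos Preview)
Your proof is correct and follows essentially the same approach as the paper: take a minimising sequence for $\mathcal{F}$ in $\sobo^{1,1}(\Omega;\R^N)$, normalise to zero mean, use Lemma~\ref{lemma:coerciveness_general} to get a uniform $\sobo^{1,1}$-bound, extract a weak-$\ast$ $\bv$-convergent subsequence, and conclude via the characterisation in Proposition~\ref{prop:characterisation_gen_minimiser}. The only difference is that you spell out a few more details (finiteness of the infimum, the compact embedding) than the paper does.
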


\begin{proof}
Let $(u_k)_{k \in \N}$ be a minimising sequence for~${\mathcal F}$ in $\sobo^{1,1}(\Omega;\R^N)$. Since ${\mathcal F}$ depends only on the gradient variable, we may assume $(u_k)_\Omega = 0$ for each $k \in \N$. As a consequence of Lemma~\ref{lemma:coerciveness_general} and $\inf_{\sobo^{1,1}(\Omega;\R^N)} {\mathcal F} < \infty$, we obtain boundedness of $(u_k)_{k \in \N}$ in $\sobo^{1,1}(\Omega;\R^{N})$. By weak-$\ast$-compactness of $\bv(\Omega;\R^N)$ we thus find that $(u_k)_{k \in \N}$ converges weakly-$\ast$, up to the passage to a subsequence, to a function $u\in\bv(\Omega;\R^{N})$. We finally conclude that~$u$ is in fact a generalised minimiser of~$\mathcal{F}$, in view of the characterisation in Proposition~\ref{prop:characterisation_gen_minimiser}.
\end{proof}

We conclude this subsection with two remarks.

\begin{remark}[Possible non-uniqueness of generalised minimisers]
\label{rem_non_uniqueness}
Similarly as for the Dirichlet problem, generalised minimisers of~$\mathcal{F}$ in the Neumann problem can in principle be non-unique, due to the occurrence of the recession function~$F^\infty$, which is only convex, but not strictly convex. If we could show that~$\D^s u$ does in fact vanish for one generalised minimiser~$u$, then we would find a minimiser of the original Neumann problem~\eqref{eq:varprin_general}. Thus, the passage to the relaxed formulation could be avoided and furthermore, it is easy to see that if~$F$ is even strictly convex, every generalised minimiser of~$\mathcal{F}$ is in fact already in $\sobo^{1,1}(\Omega;\R^{N})$ and consequently a standard minimiser of~$\mathcal{F}$.

As we have shown in Section~\ref{sec:main}, this indeed happens if the integrand $F$ is of radial structure and the hypotheses of Theorem~\ref{thm:main} are satisfied. Moreover, it is not too difficult to show that it is also the case for not necessarily radially symmetric $\mu$-elliptic integrands $F\in\hold^{2}(\R^{N\times n})$ with bounded gradient and \emph{mild degeneration} $\mu\leq 3$, since one can here adapt the strategy of~\cite{BILDHAUER02} \textup{(}see also~\cite{BILDHAUERBUCH,BECSCH13}\textup{)} to show the existence of a locally bounded generalised minimiser of class $\sobo_{\locc}^{1,L\log L}(\Omega;\R^{N})\subset\sobo_{\locc}^{1,1}(\Omega;\R^{N})$.  
\end{remark}

\begin{remark}
If we are in the setting of Theorem~\ref{thm:main} with $T_{0}\in\hold_b(\Omega;\R^{N\times n})$ verifying~\eqref{eq:coercivcond}, then the function~$u$ from~\eqref{eq:BVconv} is, as a consequence of the characterization in Proposition~\ref{prop:characterisation_gen_minimiser}, a generalised minimiser of~$\mathcal{F}=\mathfrak{F}$. With the existence of the minimiser $v \in \sobo^{1,1}(\Omega;\R^{N})$ if~$\Omega$ is simply connected, the previous Remark~\ref{rem_non_uniqueness} thus provides an alternative proof of the fact $v-(v)_\Omega=u$ and then also weak convergence $u_k \rightharpoonup u$ in $\sobo^{1,1}(\Omega;\R^N)$ \textup{(}which improves to strong convergence, see Corollary~\ref{cor_strong_convergence}\textup{)}.
\end{remark}

\subsection{The Dual Problem}\label{sec:duality}

We next address a second approach to study the convex minimisation problem~\eqref{eq:varprin_general}, namely via the so-called \emph{dual problem} in the sense of convex duality (see e.g.~\cite{EKETEM99,DACOROGNA08} for extensive treatises on this subject). After the introduction of an associated dual functional, the dual problem consists in its maximisation over a suitable class in $\lebe^{\infty}(\Omega;\R^{N \times n})$, which then leads to the same value as for the original problem~\eqref{eq:varprin_general}. In contrast to this primal problem, there is no lack of compactness for the dual problem and a solution always exists, under the assumption~\eqref{eq:generalisedcoercivcond} on~$T_0$ and~$F$, and it is then important to link the solutions of the primal and of the dual problem (which is strongly influenced by regularity issues). The general approach follows essentially the one from the Dirichlet problem, but for the convenience of the reader we give a short overview on the results and strategy of
proof, since it is often simpler than for the corresponding result in the Dirichlet problem. Moreover, we address only regular integrands, and various extension could be given also for non-differentiable integrands, following the reference~\cite{BECSCH13b}.

We shall now start to collect some background facts regarding the dual problem associated to the Neumann problem~\eqref{eq:varprin_general}. For this purpose, we first introduce for an arbitrary function $g \colon \R^m \to \R \cup \{\infty\}$ the \emph{conjugate function} $g^{*}\colon \R^m \to\R\cup\{\infty\}$ by
\begin{align*}
g^{*}(z^{*})\coloneqq \sup_{z\in \R^m} \big\{  z^* \cdot z -g(z) \big\},\qquad \text{for all } z^{*}\in \R^m.
\end{align*}
By definition $g^*$ is convex and lower semi-continuous, and if~$g$ is of class~$\hold^1(\R^m)$, we further have the \emph{duality relation}
\begin{align}\label{eq:dualityrelation}
z^{*} =  \D_z g(z) \qquad \text{if and only if} \qquad g(z)+g^{*}(z^{*})= z^{*} \cdot z
\end{align}
for $z \in \R^m$ (while if $g$ is only convex, a similar relation holds for the subdifferential instead of the differential). Keeping in mind the particular situation of radially symmetric integrands as in Section~\ref{sec:main}, we notice

\begin{remark}[Radially Symmetric Integrands]
If~$g$ is radially symmetric, i.e.,~it is of the form $g(\cdot)=f(|\cdot|)$ for some function $f \colon \R \to \R$, then we have $g^{*}(\cdot)=f^{*}(|\cdot|)$. In fact, for each $z^* \in \R^m$, we have
\begin{equation*}
g^{*}(z^{*})  = \sup_{z\in \R^{m}} \big\{ z^{*} \cdot z - f(|z|) \big\} = \sup_{t\geq 0} \big\{ t |z^{*}| - f(t) \big\} = f^{*}(|z^{*}|).
\end{equation*}
\end{remark}

In order to set up the dual problem  to the Neumann problem~\eqref{eq:varprin_general} with convex integrand~$F$, let us first note, that for any $w \in\sobo^{1,1}(\Omega;\R^{N})$, we find, via $F(z) \geq z^{*} \cdot z - F^*(z^*)$ for all $z,z^* \in \R^{N \times n}$, the inequality
\begin{equation*}
\mathcal{F}[w] \geq \int_{\Omega} \big[ \chi \cdot \nabla w - F^{*}(T_{0}+\chi) \big] \dif x = - \int_{\Omega} F^{*}(T_{0}+\chi)\dif x
\end{equation*}
for every function $\chi \in \lebe_{\bot}^{\infty}(\Omega;\R^{N\times n})$, where we have set
\begin{equation*}
\lebe_{\bot}^{\infty}(\Omega;\R^{N\times n})\coloneqq \left\{\chi\in\lebe^{\infty}(\Omega;\R^{N\times n})\colon \int_{\Omega} \chi \cdot \nabla w \dif x = 0 \text{ for all }w\in\sobo^{1,1}(\Omega;\R^{N}) \right\}.
\end{equation*}

The \emph{dual problem} to~\eqref{eq:varprin_general} then is
\begin{equation}
\label{eq:varprin_dual}
\text{to maximise} \quad \mathcal{R}_{T_0}[\chi] \coloneqq - \int_{\Omega} F^{*}(T_{0}+\chi)\dif x \qquad \text{among all } \chi \in \lebe_{\bot}^{\infty}(\Omega;\R^{N\times n}),
\end{equation}
and by passing to the infimum among all $w \in \sobo^{1,1}(\Omega;\R^N)$ and to the supremum among all $\chi \in \lebe_{\bot}^{\infty}(\Omega;\R^{N\times n})$, we immediately obtain
\begin{equation}
\label{eq:infsup_1}
\inf_{\sobo^{1,1}(\Omega;\R^{N})}\mathcal{F} \geq \sup_{\lebe_{\bot}^{\infty}(\Omega;\R^{N\times n})}\mathcal{R}_{T_0}.
\end{equation}
which is the simpler inequality of the duality formula. The other inequality can either be settled by referring to the general theory of convex duality as outlined in the Appendix~\ref{sec:dualekelandtemam}, or by a suitable approximation procedure, for which the reader is referred to Remark~\ref{rem:approximationforthedualproblem}.

\begin{remark}
Let us make a comparison with the respective Dirichlet problem
\begin{equation*}
\text{to minimise} \quad w \mapsto \int_\Omega F(\nabla w) \dif x \qquad \text{among all } w \in u_0 + \sobo^{1,1}_0(\Omega;\R^N)
\end{equation*}
with prescribed boundary values $u_0 \in \sobo^{1,1}(\Omega;\R^N)$. In this case, the dual problem is
\begin{equation*}
\text{to maximise} \quad \chi \mapsto \int_\Omega \big[ \chi \cdot \nabla u_0 - F^*(\chi) \big] \dif x \qquad \text{among all } \chi \in \lebe_{\di}^{\infty}(\Omega;\R^{N\times n}),
\end{equation*}
where
\begin{equation*}
\lebe_{\di}^{\infty}(\Omega;\R^{N\times n})\coloneqq \left\{\chi\in\lebe^{\infty}(\Omega;\R^{N\times n})\colon \int_{\Omega} \chi \cdot \nabla w \dif x = 0\;\text{for all }w\in\sobo^{1,1}_0(\Omega;\R^{N}) \right\}.
\end{equation*}
In this sense, the fact that we allow for a larger set of competitor maps in the Neumann problem than for the Dirichlet problem is reflected by a smaller set of competitors in the respective dual problems.
\end{remark}

Concerning the connection between solutions of the primal and of the dual problem, let us first state the following simple observation.

\begin{lemma}\label{lem:connectionbetweenprimalanddualsol}
Consider a convex function $F \in \hold^1(\R^{N\times n})$ satisfying~\eqref{eq:generalisedlingrowth} and let $T_{0}\in\lebe^\infty(\Omega;\R^{N\times n})$. If $u\in\sobo^{1,1}(\Omega;\R^{N})$ is a minimiser of the primal problem~\eqref{eq:varprin_general}, then the unique maximiser of the dual problem~\eqref{eq:varprin_dual} is given by $\sigma=\D_z F(\nabla u)-T_{0}$.
\end{lemma}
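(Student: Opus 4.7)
The plan is to exhibit $\sigma := \D_z F(\nabla u) - T_{0}$ directly as a dual competitor that saturates the duality inequality~\eqref{eq:infsup_1}. I would split the verification into three short steps.

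First, I would check admissibility, that is, $\sigma \in \lebe_{\bot}^{\infty}(\Omega;\R^{N\times n})$. The $\lebe^{\infty}$-bound $|\D_z F(z)|\leq L$ follows from the linear growth of~$F$: evaluating the supporting inequality $F(z+te)\geq F(z)+t\D_z F(z)\cdot e$ with $e=\D_z F(z)/|\D_z F(z)|$ against the upper bound $L(1+|z|+t)$ and letting $t\to\infty$ gives the claim. The orthogonality $\int_\Omega \sigma\cdot\nabla w\,\dif x=0$ for every $w\in\sobo^{1,1}(\Omega;\R^{N})$ is precisely the Euler--Lagrange system~\eqref{eq:EL_general}, which~$u$ satisfies by minimality.

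Second, I would invoke the pointwise duality relation~\eqref{eq:dualityrelation} with $z=\nabla u$ and $z^{*}=T_{0}+\sigma=\D_z F(\nabla u)$, which yields
\begin{equation*}
F(\nabla u)+F^{*}(T_{0}+\sigma) = (T_{0}+\sigma)\cdot\nabla u \qquad \mathscr{L}^{n}\text{-a.e.~in } \Omega.
\end{equation*}
Integrating and exploiting the orthogonality of~$\sigma$ with the admissible test function $w=u$ produces
\begin{equation*}
\mathcal{R}_{T_{0}}[\sigma] = -\int_\Omega F^{*}(T_{0}+\sigma)\,\dif x = \int_\Omega\big[F(\nabla u)-T_{0}\cdot\nabla u\big]\dif x = \mathcal{F}[u].
\end{equation*}
Combined with~\eqref{eq:infsup_1} and the minimality of~$u$, this gives $\mathcal{R}_{T_{0}}[\chi]\leq \mathcal{F}[u]=\mathcal{R}_{T_{0}}[\sigma]$ for every $\chi\in\lebe_{\bot}^{\infty}(\Omega;\R^{N\times n})$, so $\sigma$ is a maximiser of the dual problem.

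For the uniqueness, let $\tilde\sigma\in\lebe_{\bot}^{\infty}(\Omega;\R^{N\times n})$ be any maximiser. The Fenchel inequality $F^{*}(T_{0}+\tilde\sigma)\geq (T_{0}+\tilde\sigma)\cdot\nabla u - F(\nabla u)$ together with the orthogonality of~$\tilde\sigma$ against $\nabla u$ yields
\begin{equation*}
-\mathcal{R}_{T_{0}}[\tilde\sigma] = \int_\Omega F^{*}(T_{0}+\tilde\sigma)\,\dif x \geq \int_\Omega\big[T_{0}\cdot\nabla u - F(\nabla u)\big]\dif x = -\mathcal{F}[u],
\end{equation*}
and maximality forces equality, hence Fenchel's inequality is saturated a.e. The equality case in~\eqref{eq:dualityrelation} (using the differentiability of~$F$) then forces $T_{0}+\tilde\sigma=\D_z F(\nabla u)=T_{0}+\sigma$ a.e., so $\tilde\sigma=\sigma$. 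The only mildly subtle step is this rigidity: it genuinely relies on the equality case of~\eqref{eq:dualityrelation} and would require a subdifferential-based argument if~$F$ were not of class~$\hold^{1}$, but under the present hypothesis it is immediate.
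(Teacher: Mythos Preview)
Your proof is correct and follows essentially the same approach as the paper's own proof: verify admissibility via the Euler--Lagrange system, use the duality relation~\eqref{eq:dualityrelation} to show $\mathcal{R}_{T_0}[\sigma]=\mathcal{F}[u]$, and for uniqueness force equality in Fenchel's inequality to identify any maximiser via~\eqref{eq:dualityrelation}. The only difference is that you spell out the $\lebe^\infty$-bound on $\D_z F$ explicitly, whereas the paper simply asserts boundedness of~$\sigma$.
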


\begin{proof}
We first note that $\sigma$ belongs to $\lebe_{\bot}^{\infty}(\Omega;\R^{N\times n})$, by boundedness of~$\sigma$ and the fact that~$u$ satisfies the Euler--Lagrange system~\eqref{eq:EL_general}, due to its minimality. The evaluation of~$\mathcal{R}_{T_0}$ in~$\sigma$, in combination with~\eqref{eq:dualityrelation} and once again~\eqref{eq:EL_general} (applied with $\varphi = u$), yields
\begin{align*}
\mathcal{R}_{T_0}[\sigma] & = -\int_{\Omega}F^{*}(T_{0}+\sigma)\dif x = - \int_{\Omega}F^{*}(\D_z F(\nabla u))\dif x\\
& =\int_{\Omega} \big[ F(\nabla u) - \D_z F(\nabla u) \cdot \nabla u \big] \dif x =  \int_{\Omega} \big[ F(\nabla u) - T_{0} \cdot \nabla u \big] \dif x = \inf_{\sobo^{1,1}(\Omega;\R^{N})}\mathcal{F},
\end{align*}
and~\eqref{eq:infsup_1} then shows that~$\sigma$ is a maximiser of~\eqref{eq:varprin_dual}. Moreover, if $\tilde{\sigma} \in \lebe_{\bot}^{\infty}(\Omega;\R^{N\times n})$ is any maximiser of the dual problem~\eqref{eq:varprin_dual}, then we deduce from the previous identity
\begin{equation*}
 -\int_{\Omega} F^{*}(T_{0}+\tilde{\sigma})  \dif x = \int_{\Omega} \big[ F(\nabla u) - T_{0} \cdot \nabla u \big] \dif x = \int_{\Omega} \big[ F(\nabla u) - (T_{0}+\tilde{\sigma}) \cdot \nabla u \big] \dif x.
\end{equation*}
Since by definition of the conjugate function $F^*$ we have
\begin{equation*}
-F^{*}(T_{0}+\tilde{\sigma}) \leq F(\nabla u) - (T_{0}+\tilde{\sigma}) \cdot \nabla u ,
\end{equation*}
we actually have equality $ F(\nabla u) + F^{*}(T_{0}+\tilde{\sigma}) = (T_{0} +\tilde{\sigma}) \cdot \nabla u$ a.e.~on $\Omega$. Thus, by~\eqref{eq:dualityrelation} we arrive at
\begin{equation*}
 T_0 + \tilde{\sigma}=\D_z F(\nabla u)  \qquad \text{a.e.~on $\Omega$}
\end{equation*}
which proves uniqueness of the maximiser of~\eqref{eq:varprin_dual}.
\end{proof}

As we have emphasized above, in general we do not know that a minimiser of~\eqref{eq:varprin_general} exists. However, we can still extract some information from minimising sequences (similarly as in \cite[Lemma~3.1]{BILFUC02a}).

\begin{lemma}\label{lem:connectionbetweenprimalanddualsol_2}
Consider a convex function $F \in \hold^1(\R^{N\times n})$ satisfying~\eqref{eq:generalisedlingrowth} and let $T_{0}\in\lebe^\infty(\Omega;\R^{N\times n})$ verify~\eqref{eq:generalisedcoercivcond} . If $(u_{k})_{k \in \N}$ is a minimising sequence of the primal problem~\eqref{eq:varprin_general}, then the sequence $(\D_z F(\nabla u_{k}) - T_0)_{k \in \N}$ converges weakly-$*$ in $\lebe^{\infty}(\Omega;\R^{N\times n})$ to the unique maximiser of the dual problem~\eqref{eq:varprin_dual}. 
\end{lemma}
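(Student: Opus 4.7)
The plan is to proceed in three main steps: first establish uniform $\lebe^{\infty}$-boundedness of the sequence $\sigma_{k} \coloneqq \D_{z} F(\nabla u_{k}) - T_{0}$; then extract a weak-$*$ subsequential limit $\sigma$ and identify it with the unique dual maximiser $\sigma_{*}$ via a Fenchel--Young duality-gap argument; and finally conclude weak-$*$ convergence of the full sequence by observing that every subsequence admits a further subsequence converging to the same limit.

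For the first step I would observe that a $\hold^{1}$ convex function~$F$ of linear growth~\eqref{eq:generalisedlingrowth} automatically has bounded gradient: for $z\in\R^{N\times n}$ and a unit vector $\xi$, the convexity estimate $F(z+t\xi) \geq F(z) + t\D_{z}F(z)\cdot\xi$ combined with $F(z+t\xi) \leq L(1+|z|+t)$ and $F(z) \geq \nu|z|-L$ yields, after division by~$t$ and letting $t \to \infty$, the pointwise bound $\D_{z}F(z)\cdot\xi \leq L$. Hence $(\sigma_{k})_{k\in\N}$ is uniformly bounded in $\lebe^{\infty}(\Omega;\R^{N\times n})$, and Banach--Alaoglu provides a subsequence with $\sigma_{k} \wstar \sigma$ in $\lebe^{\infty}(\Omega;\R^{N\times n})$ for some $\sigma$.

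The heart of the argument is to identify $\sigma$ with the dual maximiser $\sigma_{*} \in \lebe_{\bot}^{\infty}(\Omega;\R^{N\times n})$, whose existence and uniqueness are ensured by the general convex-duality machinery described in Appendix~\ref{sec:dualekelandtemam} together with the coerciveness hypothesis~\eqref{eq:generalisedcoercivcond}. Since $\sigma_{*}\in\lebe_{\bot}^{\infty}$, one has $\int_{\Omega}\sigma_{*}\cdot\nabla u_{k}\dif x = 0$ for every~$k$, so the duality gap can be rewritten as
\begin{equation*}
\mathcal{F}[u_{k}] - \mathcal{R}_{T_{0}}[\sigma_{*}] = \int_{\Omega}\big[F(\nabla u_{k}) + F^{*}(T_{0}+\sigma_{*}) - (T_{0}+\sigma_{*})\cdot\nabla u_{k}\big]\dif x.
\end{equation*}
The integrand is pointwise non-negative by Fenchel--Young, while the left-hand side tends to zero in view of $\mathcal{F}[u_{k}]\to\inf_{\sobo^{1,1}(\Omega;\R^{N})}\mathcal{F} = \sup_{\lebe_{\bot}^{\infty}(\Omega;\R^{N\times n})}\mathcal{R}_{T_{0}} = \mathcal{R}_{T_{0}}[\sigma_{*}]$. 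Thus the Fenchel--Young defect converges to zero in $\lebe^{1}(\Omega)$ and, along a further subsequence, pointwise $\mathscr{L}^{n}$-a.e.\ in~$\Omega$. Because $F\in\hold^{1}$, the equality $F(z)+F^{*}(z^{*}) = z^{*}\cdot z$ is equivalent to $\D_{z}F(z) = z^{*}$; exploiting that~\eqref{eq:generalisedcoercivcond} keeps $T_{0}(x)+\sigma_{*}(x)$ uniformly inside the interior of the effective domain of $F^{*}$, where $F^{*}$ is strictly convex, I would upgrade this near-equality to the pointwise identification $\D_{z}F(\nabla u_{k})(x)\to T_{0}(x)+\sigma_{*}(x)$ $\mathscr{L}^{n}$-a.e. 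Combined with the $\lebe^{\infty}$-bound on $\D_{z}F(\nabla u_{k})$ and dominated convergence, this gives $\sigma_{k}\to\sigma_{*}$ strongly in $\lebe^{1}(\Omega;\R^{N\times n})$, so in particular $\sigma = \sigma_{*}$ as weak-$*$ limit in $\lebe^{\infty}$. Since every weak-$*$ cluster point of the bounded sequence $(\sigma_{k})_{k\in\N}$ must agree with $\sigma_{*}$, the entire sequence converges weakly-$*$ to $\sigma_{*}$.

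The hardest part is precisely the final pointwise identification: because neither strict convexity nor any quantitative local uniform convexity of~$F$ is assumed, the promotion of the Fenchel--Young defect $g_{k}\to 0$ to $\D_{z}F(\nabla u_{k})\to T_{0}+\sigma_{*}$ must rest on the strict convexity of the conjugate~$F^{*}$ at $T_{0}+\sigma_{*}$ --~available thanks to $F\in\hold^{1}$~-- together with the fact that~\eqref{eq:generalisedcoercivcond} uniformly prevents $T_{0}+\sigma_{*}$ from approaching the boundary $\{z^{*}\colon z^{*}\cdot\xi = F^{\infty}(\xi)\text{ for some }\xi\in\mathbb{S}^{N\times n-1}\}$ of the effective domain of $F^{*}$.
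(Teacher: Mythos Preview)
Your approach diverges substantially from the paper's, and the crucial step contains a gap. You correctly establish the uniform $\lebe^{\infty}$-bound and the convergence of the Fenchel--Young defect $g_k = F(\nabla u_k) + F^{*}(T_0+\sigma_*) - (T_0+\sigma_*)\cdot\nabla u_k \to 0$ in $\lebe^{1}(\Omega)$. The problem is the ``upgrade'' to pointwise convergence $\D_z F(\nabla u_k)(x)\to T_0(x)+\sigma_*(x)$: your justification rests on the assertion that condition~\eqref{eq:generalisedcoercivcond} keeps $T_0+\sigma_*$ \emph{uniformly inside the interior} of $\operatorname{dom} F^{*}$, but~\eqref{eq:generalisedcoercivcond} only controls $T_0$, not $T_0+\sigma_*$. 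There is no a~priori reason why the dual maximiser should stay uniformly away from the boundary $\{z^{*}:z^{*}\cdot\xi=F^{\infty}(\xi)\text{ for some }\xi\}$; indeed, if a generalised minimiser in $\bv\setminus\sobo^{1,1}$ exists, one expects $T_0+\sigma_*$ to touch that boundary on the support of the singular part. Even if one only asks for $T_0(x)+\sigma_*(x)$ to lie in the interior a.e.\ (not uniformly), this does not follow from the hypotheses. Without it, the implication ``$g_k(x)\to 0 \Rightarrow \D_z F(\nabla u_k(x))\to T_0(x)+\sigma_*(x)$'' is exactly the statement that every minimising sequence for the convex function $z\mapsto F(z)-(T_0(x)+\sigma_*(x))\cdot z$ has gradient tending to zero --- which is not automatic and is precisely the kind of statement that requires Ekeland's variational principle.

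The paper circumvents this entirely. Rather than aiming at pointwise convergence, it applies Ekeland's variational principle to replace $(u_k)$ by a nearby minimising sequence $(v_k)$ that additionally satisfies an approximate Euler--Lagrange inequality $|\int_\Omega(\D_z F(\nabla v_k)-T_0)\cdot\nabla\varphi\,\dif x|\leq\ep_k\|\nabla\varphi\|_{\lebe^{1}}$ with $\ep_k\to 0$. This immediately forces any weak-$*$ cluster point $\sigma$ to lie in $\lebe^{\infty}_{\bot}$, and then weak-$*$ upper semicontinuity of $\chi\mapsto -\int_\Omega F^{*}(\chi)\,\dif x$ together with the approximate Euler--Lagrange equation (tested with $\varphi=v_k$) yields $\mathcal{R}_{T_0}[\sigma]\geq\inf\mathcal{F}$ directly, without any pointwise analysis. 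The essential strict convexity of $F^{*}$ then gives uniqueness of the maximiser, hence convergence of the full sequence. In short, the paper works purely at the level of integral functionals and weak-$*$ limits, whereas your route through pointwise convergence is more ambitious but, as written, incomplete.
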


\begin{proof}
Let~$\sigma$ be a weak-$*$ $\lebe^{\infty}$-cluster point of the sequence $(\D_z F(\nabla u_{k}) - T_0)_{k \in \N}$ and let $(\ep_k)_{k \in \N}$ be the null-sequence in $[0,\infty)$ defined by
\begin{equation*}
 \ep_k^2 \coloneqq \mathcal{F}[u_k] -  \inf_{\sobo^{1,1}(\Omega;\R^{N})}\mathcal{F}.
\end{equation*}
Here we follow the strategy of proof from~\cite[Section~5]{BECSCH13b}. In the first step, we want to pass to a sequence $(v_{k})_{k \in \N}$ in $\sobo^{1,1}(\Omega;\R^N)$, preserving the properties that
\begin{align}
\label{eq:v_k_minimsing}
& (v_k)_{k \in \N} \text{ is a minimising sequence of the primal problem~\eqref{eq:varprin_general},} \\
\label{eq:v_k_cluster_point}
& \text{$\sigma$ is a  weak-$*$ $\lebe^{\infty}$-cluster point of the sequence }(\D_z F(\nabla v_{k}) - T_0)_{k \in \N},
\end{align}
but with the additional benefit that we have
\begin{equation}
\label{eq:v_k_EL_limit}
\bigg| \int_\Omega \big(\D_z F(\nabla v_k) - T_0 \big) \cdot \nabla \varphi  \dif x \bigg| \leq \ep_k \| \nabla \varphi \|_{\lebe^1(\Omega;\R^{N \times n})} \qquad \text{for all } \varphi \in \sobo^{1,1}(\Omega;\R^{N})
\end{equation}
for each $k \in \N$. In fact, for each $k \in \N$ we may apply Ekeland's variational principle~\cite[Theorem~1.1]{EKELAND74} on the complete metric space $\{w \in \sobo^{1,1}(\Omega;\R^N) \colon (w)_\Omega=0\}$ (with metric induced by the norm $\| \nabla w \|_{\lebe^1(\Omega;\R^{N \times n})}$) to find a function $v_k \in \sobo^{1,1}(\Omega;\R^N)$ with average $(v_k)_{\Omega}=0$ such that
\begin{align*}
 & \mathcal{F}[v_k] \leq \mathcal{F}[u_k], \\
 & \| \nabla v_k - \nabla u_k \|_{\lebe^1(\Omega;\R^{N \times n})} \leq \ep_k, \\
 & \mathcal{F}[v_k] \leq \mathcal{F}[w] + \ep_k \| \nabla v_k - \nabla w \|_{\lebe^1(\Omega;\R^{N \times n})}  \quad \text{for all } w \in \sobo^{1,1}(\Omega;\R^N).
\end{align*}
As a consequence of the first inequality, we obtain~\eqref{eq:v_k_minimsing}, from the second inequality we infer the pointwise convergence $\nabla v_k - \nabla u_k \to 0$ a.e.~in~$\Omega$, for some subsequence, and thus~\eqref{eq:v_k_cluster_point}, and the third inequality actually means that $v_k$ is the minimiser of a perturbed functional, for which the first-order criterion for minimality then yields~\eqref{eq:v_k_EL_limit}.

In the second step, we now prove the claim of the lemma, with the sequence $(u_k)_{k \in \N}$ replaced by $(v_k)_{k \in \N}$ as constructed above. Via~\eqref{eq:v_k_EL_limit} we first observe that~$\sigma$ belongs to the space $\lebe_{\bot}^{\infty}(\Omega;\R^{N\times n})$ of admissible functions for the dual problem~\eqref{eq:varprin_dual}. By convexity and lower semi-continuity of $F^*$, the map $\chi \mapsto -\int_{\Omega} F^{*}(\chi)\dif x$ is upper semicontinuous with respect to weak-$\ast$-convergence in $\lebe^{\infty}(\Omega;\R^{N\times n})$. In combination with the duality relation~\eqref{eq:dualityrelation} we thus find (up to the passage to a suitable subsequence)
\begin{align*}
 \mathcal{R}_{T_0}[\sigma] = - \int_{\Omega} F^*(T_0 + \sigma) \dif x
  & \geq - \lim_{k \to \infty} \int_{\Omega} F^*(\D_z F(\nabla v_{k}))  \dif x \\
  & = \lim_{k \to \infty} \int_{\Omega} \big[ F(\nabla v_{k}) - \D_z F(\nabla v_{k}) \cdot \nabla v_{k} \big] \dif x \\
  & = \lim_{k \to \infty} \mathcal{F}[v_k] + \lim_{k \to \infty}  \int_{\Omega} \big( T_0 - \D_z F(\nabla v_{k}) \big) \cdot \nabla v_{k} \dif x \,.
\end{align*}
In view of~\eqref{eq:v_k_minimsing}, the first term on the right-hand side gives $\inf_{\sobo^{1,1}(\Omega;\R^{N})}\mathcal{F}$, while the second term vanishes, as a consequence of~\eqref{eq:v_k_EL_limit} (applied with $\varphi = v_k$) and the uniform boundedness of $(\nabla v_k)_{k \in \N}$ in $\lebe^1(\Omega;\R^{N \times n})$ in view of Proposition~\ref{lemma:coerciveness_general}. Thus, with~\eqref{eq:infsup_1}, we arrive at
\begin{equation}\label{eq:infsuprelationsconcluded}
 \mathcal{R}_{T_0}[\sigma] \geq \inf_{\sobo^{1,1}(\Omega;\R^{N})} \mathcal{F} \geq \sup_{\lebe_{\bot}^{\infty}(\Omega;\R^{N\times n})}\mathcal{R}_{T_0},
\end{equation}
hence, $\sigma$ is indeed a maximiser of the dual problem~\eqref{eq:varprin_dual}. Now, since~$F^*$ is essentially strictly convex (see~\cite[Theorem~26.3]{ROCKAFELLAR70}), the maximiser is in fact unique, and thus, the whole sequence $(\D_z F(\nabla u_{k}) - T_0)_{k \in \N}$ converges weakly-$*$ in $\lebe^{\infty}(\Omega;\R^{N\times n})$ to the dual solution~$\sigma$ as asserted in the lemma.
\end{proof}

\smallskip

\begin{remark}
\label{rem:approximationforthedualproblem}
\strut
\begin{enumerate}[font=\normalfont, label=(\roman{*}), ref=(\roman{*})]
 \item With~\eqref{eq:infsuprelationsconcluded} and the previously established inequality~\eqref{eq:infsup_1}, we have finished the proof of the duality correspondence
 \begin{equation*}
 \inf_{\sobo^{1,1}(\Omega;\R^{N})}\mathcal{F} = \sup_{\lebe_{\bot}^{\infty}(\Omega;\R^{N\times n})}\mathcal{R}_{T_0}.
 \end{equation*}
 \item Taking into account Proposition~\ref{lemma:coerciveness_general}, we obtain in particular the existence of a unique solution of the dual problem~\eqref{eq:varprin_dual}, under the assumptions of the previous lemma.
 \item In the above setting, with $T_{0}\in\hold_b(\Omega;\R^{N\times n})$ verifying~\eqref{eq:generalisedcoercivcond}, we have shown in Proposition~\ref{prop:existence_relaxed} the existence of a generalised minimiser $u \in \bv(\Omega;\R^N)$ to the primal problem~\eqref{eq:varprin_general}. In the situation, where a minimising sequence $(u_k)_{k \in \N}$ exists such that $u_k$ converges weakly-$\ast$ in $\bv(\Omega;\R^N)$ to $u$ and $\nabla u_k$ converges a.e.~in~$\Omega$ to the absolutely continuous part $\nabla u$ in the Lebesgue decomposition for $\D u$, we in fact find that $\sigma \coloneqq \D_z F(\nabla u) - T_0$ solves the dual problem~\eqref{eq:varprin_dual}.
\end{enumerate}
\end{remark}

We finish this section with a regularity statement for the solution of the dual problem, in the situation with radially symmetric integrands as in Theorem~\ref{thm:main}.

\begin{theorem}
\label{thm_regularity_dual}
Under the assumptions of Theorem~\ref{thm:main}, the dual problem~\eqref{eq:varprin_dual} with $F(\cdot) = f(|\cdot|)$ possesses a unique solution $\sigma\in \sobo_{\locc}^{1,2}(\Omega;\R^{N\times n})$ which is given by
\begin{equation}\label{eq:repdualsolution}
\sigma = f'(|\nabla u|)\frac{\nabla u}{|\nabla u|} - T_0 \qquad \mathscr{L}^{n}\text{-a.e.~in } \Omega,
\end{equation}
where $u \in\sobo^{1,1}(\Omega;\R^{N})$ is the minimiser of the primal problem from Theorem~\ref{thm:main}.
\end{theorem}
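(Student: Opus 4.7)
The representation formula~\eqref{eq:repdualsolution} is a direct consequence of Lemma~\ref{lem:connectionbetweenprimalanddualsol}. Indeed, Theorem~\ref{thm:main} furnishes a minimiser $u\in\sobo^{1,1}(\Omega;\R^{N})$ of the primal problem~\eqref{eq:varprin_general} with radial integrand $F(\cdot)=f(|\cdot|)$, so that $\D_z F(\nabla u)=f'(|\nabla u|)\nabla u/|\nabla u|=\A(\nabla u)$ a.e.\ on $\Omega$, and the cited lemma identifies $\sigma=\A(\nabla u)-T_{0}$ as the unique maximiser of the dual problem~\eqref{eq:varprin_dual}. Hence it remains to establish the regularity assertion $\sigma\in\sobo_{\locc}^{1,2}(\Omega;\R^{N\times n})$, and since $T_{0}\in\sobo^{2,\infty}(\Omega;\R^{N\times n})\subset\sobo_{\locc}^{1,2}(\Omega;\R^{N\times n})$, it suffices to prove that $\A(\nabla u)\in\sobo_{\locc}^{1,2}(\Omega;\R^{N\times n})$.

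My plan is to obtain this by passing to the limit in the sequence $\alpha_{k}\coloneqq\A(\nabla u_{k})$, where $(u_{k})_{k\in\N}$ is the approximating sequence from Lemma~\ref{lemma_existence_uk}. The key observation is precisely the Cauchy--Schwarz estimate~\eqref{CS_sec_der} employed in the proof of Corollary~\ref{cor_pointwise_convergence}, which combined with the upper bound of~\eqref{eq:growth_D_z_A} yields, for each $s\in\{1,\ldots,n\}$,
\begin{equation*}
|\partial_{s}\alpha_{k}|^{2}\leq L\,\D_{z}\A(\nabla u_{k})[\partial_{s}\nabla u_{k},\partial_{s}\nabla u_{k}].
\end{equation*}
Summing over $s$ and integrating over an arbitrary compact set $K\Subset\Omega$, the uniform a~priori estimate~\eqref{eq:maincoercive} of Lemma~\ref{lem:sobolevregularity} then gives
\begin{equation*}
\sup_{k\in\N}\|\nabla\alpha_{k}\|_{\lebe^{2}(K;\R^{N\times n\times n})}<\infty.
\end{equation*}
Together with the trivial bound $\|\alpha_{k}\|_{\lebe^{\infty}(\Omega;\R^{N\times n})}\leq L$ (which follows from $|f'|\leq L$), this shows that $(\alpha_{k})_{k\in\N}$ is bounded in $\sobo^{1,2}(K;\R^{N\times n})$.

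By a diagonal argument and the reflexivity of $\sobo^{1,2}(K;\R^{N\times n})$, some subsequence of $(\alpha_{k})_{k\in\N}$ converges weakly in $\sobo_{\locc}^{1,2}(\Omega;\R^{N\times n})$ to a limit $\alpha\in\sobo_{\locc}^{1,2}(\Omega;\R^{N\times n})$. On the other hand, the pointwise convergence $\nabla u_{k}\to\nabla u$ on $\Omega$ from Corollary~\ref{cor_pointwise_convergence}, combined with the continuity of the map $\A\colon\R^{N\times n}\to\R^{N\times n}$, yields $\alpha_{k}\to\A(\nabla u)$ almost everywhere on $\Omega$. Uniqueness of limits therefore forces $\alpha=\A(\nabla u)$ almost everywhere, whence $\A(\nabla u)\in\sobo_{\locc}^{1,2}(\Omega;\R^{N\times n})$ and consequently $\sigma=\A(\nabla u)-T_{0}\in\sobo_{\locc}^{1,2}(\Omega;\R^{N\times n})$, which completes the proof.

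No step in this argument is expected to present substantial difficulty, as the heavy lifting has already been carried out in Section~\ref{section_a_priori_estimates}: the existence of the $\sobo^{1,1}$-minimiser, the uniform second-order estimate~\eqref{eq:maincoercive} and the pointwise convergence of the gradients are all available from previous results. The only mild point worth verifying is the matching of the weak $\sobo_{\locc}^{1,2}$-limit of $(\alpha_{k})_{k\in\N}$ with the pointwise limit, which is settled by standard uniqueness-of-limit arguments once weak convergence in $\lebe^{2}_{\locc}$ and almost-everywhere convergence are both at hand.
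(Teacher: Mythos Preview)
Your proposal is correct and follows essentially the same strategy as the paper: both derive uniform local $\sobo^{1,2}$-bounds on the approximating sequence via the Cauchy--Schwarz inequality applied to the positive (semi-)definite bilinear form together with the a~priori estimate~\eqref{eq:maincoercive}, and then identify the limit using the pointwise convergence of the gradients. The only cosmetic difference is that you work directly with $\alpha_{k}=\A(\nabla u_{k})$ (thus reusing verbatim the bound already obtained in the proof of Corollary~\ref{cor_pointwise_convergence}), whereas the paper carries out the analogous computation for $\sigma_{k}=\A_{k}(\nabla u_{k})-T_{0}$ with the viscosity term included; this distinction is immaterial.
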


\begin{proof}
By Lemma~\ref{lem:connectionbetweenprimalanddualsol}, we obtain that $\sigma\coloneqq f'(|\nabla u|)\nabla u/|\nabla u|-T_{0} \in\lebe^{\infty}_{\bot}(\Omega;\R^{N\times n})$ is the unique solution of the dual problem. Thus, it only remains to verify the local $\sobo^{1,2}$-regularity of $\sigma$. To do so, we first recall from~\eqref{eq:unifbound1} and the pointwise convergence $\nabla u_k \to \nabla u$ $\mathscr{L}^{n}$-a.e.~in~$\Omega$ that $\sigma$ is a weak $\lebe^{2}$-cluster point of the sequence $(\sigma_{k})_{k \in \N}\coloneqq (\A_{k}(\nabla u_k)-T_{0})_{k \in \N}$, with $\A_k$ defined in~\eqref{eq:Aregularised} and with $u_k \in \sobo^{1,2}(\Omega;\R^{N})$ the minimiser of the functional~$\mathfrak{F}_{k}$ in~\eqref{eq:approx1}, for every $k \in \N$. Similarly as in the proof of Lemma~\ref{lem:sobolevregularity} we now exploit for each $k \in \N$ the fact that $\D_z A_k(\nabla u_k)$ is a positive definite, bilinear form, which is further bounded uniformly in view of ~\eqref{eq:growth_D_z_A}. By applying the Cauchy--Schwarz inequality similarly as in~\eqref{CS_sec_der} we then find, for each $s=1,\ldots,n$ and every compact set $K \subset \Omega$, the estimate
\begin{align*}
\lefteqn{ \int_{K} |\partial_{s}\sigma_{k}|^{2} \dif x = \int_{K} \D_z A_k(\nabla u_k) [ \partial_{s} \nabla u_{k} ,\partial_{s}\sigma_{k} ]  \dif x - \int_{K}  \partial_{s} T_{0} \cdot \partial_{s}\sigma_{k} \dif x } \\
 & \leq \bigg( \int_{K}  \D_z A_k(\nabla u_k) [ \partial_{s} \nabla u_{k}, \partial_{s} \nabla u_{k} ] \dif x \bigg)^{\frac{1}{2}} \bigg( \int_{K}  \D_z A_k(\nabla u_k) [ \partial_{s}\sigma_{k}, \partial_{s}\sigma_{k} ] \dif x \bigg)^{\frac{1}{2}} \\
 & \quad + \mathscr{L}^{n}(K)^{\frac{1}{2}} \|T_{0}\|_{\sobo^{1,\infty}(\Omega;\R^{N\times n})}\bigg( \int_{K} |\partial_{s}\sigma_{k}|^{2} \dif x \bigg)^{\frac{1}{2}} \\
 & \leq C \bigg(  \bigg( \int_{K} \D_z A_k(\nabla u_k) [ \partial_{s} \nabla u_{k}, \partial_{s} \nabla u_{k} ] \dif x \bigg)^{\frac{1}{2}}  +1\bigg) \bigg( \int_{K} |\partial_{s}\sigma_{k}|^{2} \dif x \bigg)^{\frac{1}{2}}
\end{align*}
with a constant~$C$ depending only on~$L$ and $\|T_{0}\|_{\sobo^{1,\infty}(\Omega;\R^{N\times n})}$. By an absorption argument and the local uniform estimate in Lemma~\ref{lem:sobolevregularity}, we hence deduce that $\sigma_k$ is even uniformly bounded in $\sobo^{1,2}(\Omega;\R^{N \times n})$, for each compact set $K \subset \Omega$. As a consequence, we deduce $\sigma \in \sobo_{\locc}^{1,2}(\Omega;\R^{N\times n})$ as claimed.
\end{proof}

\section{Appendix}
\label{sec:appendix}

We now collect some auxiliary and supplementary results that have occurred and been used in the main part of the paper.

\subsection{Reshetnyak-type Lower Semicontinuity Results}
We here state a result on the lower semicontinuity and continuity of convex variational integrals of linear growth due to Reshetnyak~\cite{RESHETNYAK68} (in the formulation of~\cite[Theorem~2.38 and Theorem~2.39]{AMBFUSPAL99} and~\cite[Theorem~2.4]{BECSCH13}) and then comment on its application in our setting.

\begin{theorem}[Reshetnyak (Lower Semi-)Continuity Theorem]\label{lem:reshetnyak}
Let $m\in\mathds{N}$, let $\Omega$ be a bounded, open subset of $\R^{n}$ and let $(\mu_{k})_{k \in \N}$ be a sequence in $\mathcal{M}(\Omega;\R^{m})$ that converges weakly-$\ast$ to some $\mu\in\mathcal{M}(\Omega;\R^{m})$. Moreover, assume that all $\mu,\mu_{1},\mu_{2},\ldots $ take values in some closed convex cone $K\subset\R^{m}$. Then we have the following statements:
\begin{enumerate}
\item \emph{(Lower Semicontinuity Part.)} If $G \colon \Omega \times K \to[0,\infty]$ is a lower semicontinuous function which is convex and $1$-homogeneous function in the second variable, then there holds
\begin{align*}
\int_{\Omega} G\Big( \cdot \,, \frac{\dif \mu}{\dif |\mu|}\Big)\dif |\mu| \leq \liminf_{k\to\infty}\int_{\Omega}G\Big(\cdot \,, \frac{\dif \mu_{k}}{\dif |\mu_{k}|}\Big)\dif |\mu_{k}|.
\end{align*}
\item \emph{(Continuity Part.)} If $G\colon \Omega \times K \to [0,\infty)$ is a continuous function which is $1$-homo\-geneous in the second variable  and if in addition $|\mu_{k}|(\Omega) \to |\mu|(\Omega)$ as $k \to \infty$, then there holds
\begin{align*}
\int_{\Omega}G\Big(\cdot\,, \frac{\dif \mu}{\dif |\mu|}\Big)\dif |\mu| = \lim_{k\to\infty}\int_{\Omega}G\Big(\cdot \,,\frac{\dif \mu_{k}}{\dif |\mu_{k}|}\Big)\dif |\mu_{k}|.
\end{align*}
\end{enumerate}
\end{theorem}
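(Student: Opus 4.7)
The plan is to establish both parts of Reshetnyak's theorem by reducing the evaluation of the functional $\mu \mapsto \int_{\Omega} G(x, d\mu/d|\mu|) d|\mu|$ to the pairing of $\mu$ with continuous test vector fields, to which weak-$*$ convergence applies directly. The key structural fact to exploit is that a lower semicontinuous, convex, $1$-homogeneous function $z \mapsto G(x,z)$ on the closed convex cone $K$ admits a supremum representation
\begin{equation*}
 G(x,z) = \sup_{j \in \mathbb{N}} a_{j}(x) \cdot z \qquad \text{for all } (x,z) \in \Omega \times K,
\end{equation*}
for a countable family $\{a_{j}\}_{j \in \mathbb{N}} \subset \hold_b(\Omega;\R^{m})$ satisfying $a_{j}(x) \cdot z \leq G(x,z)$ for all $x \in \Omega$ and $z \in K$. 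This follows from the Legendre--Fenchel duality for $1$-homogeneous convex functions (whose polar is an indicator function) combined with a measurable selection argument based on the separability of $\hold_b(\Omega;\R^{m})$ and the lower semicontinuity of $G$.

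For the lower semicontinuity part, I would combine the supremum representation with a standard partition-type argument to obtain
\begin{equation*}
 \int_{\Omega} G\Big(x, \tfrac{d\mu}{d|\mu|}\Big) d|\mu| = \sup \bigg\{ \sum_{i=1}^{N} \int_{\Omega} \varphi_{i}(x)\, a_{j_{i}}(x) \cdot d\mu \bigg\} ,
\end{equation*}
the supremum being taken over $N \in \mathbb{N}$, continuous cutoffs $\varphi_{i} \in \hold_{c}(\Omega;[0,1])$ with $\sum_{i} \varphi_{i} \leq 1$, and index choices $j_{i} \in \mathbb{N}$. For each fixed configuration, the map $\mu \mapsto \sum_{i} \int_{\Omega} \varphi_{i} a_{j_{i}} \cdot d\mu$ is continuous under the weak-$*$ convergence $\mu_{k} \wstar \mu$ because each $\varphi_{i} a_{j_{i}}$ lies in $\hold_{c}(\Omega;\R^{m})$. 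Moreover, since $\varphi_{i} a_{j_{i}}(x) \cdot z \leq \varphi_{i}(x) G(x,z)$ and the $\varphi_{i}$ form a subpartition of unity, the inner sum is dominated by $\int_{\Omega} G(x, d\mu_{k}/d|\mu_{k}|) d|\mu_{k}|$ along the sequence. Passing to the limit and then taking the supremum over admissible configurations yields the asserted inequality.

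For the continuity part, the natural strategy is to lift the measures to the sphere bundle $\overline{\Omega} \times \mathbb{S}^{m-1}$ by setting
\begin{equation*}
 d\tilde{\mu}_{k}(x,\omega) \coloneqq  d|\mu_{k}|(x) \otimes \delta_{\frac{d\mu_{k}}{d|\mu_{k}|}(x)}(\omega),
\end{equation*}
so that $\int F(x,\omega) d\tilde{\mu}_{k} = \int F(x, d\mu_{k}/d|\mu_{k}|) d|\mu_{k}|$ for every $F \in \hold(\overline{\Omega} \times \mathbb{S}^{m-1})$. Using the uniform total mass bound together with the weak-$*$ convergence $\mu_{k} \wstar \mu$ and the assumed convergence $|\mu_{k}|(\Omega) \to |\mu|(\Omega)$, one verifies (by testing against functions of the form $\varphi(x) \omega_{\ell}$ and $\varphi(x)$ separately) that $\tilde{\mu}_{k} \wstar \tilde{\mu}$ in $\mathcal{M}^{+}(\overline{\Omega} \times \mathbb{S}^{m-1})$ with matching total masses, hence strictly. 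The extension of $G$ to $\overline{\Omega} \times \mathbb{S}^{m-1}$ via $1$-homogeneity is continuous and bounded on the compact target, so the strict convergence of $\tilde{\mu}_{k}$ to $\tilde{\mu}$ delivers convergence of $\int G d\tilde{\mu}_{k}$ to $\int G d\tilde{\mu}$, which is the desired identity.

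I expect the main obstacle to be justifying the countable supremum representation of $G$ carefully in the presence of possibly infinite values and a nontrivial cone $K$ together with the joint $(x,z)$-lower semicontinuity; once that is in place, both parts reduce to standard weak-$*$ arguments. A secondary technical point will be ensuring that the lifted measures $\tilde{\mu}_{k}$ converge weakly-$*$ on the full closure $\overline{\Omega} \times \mathbb{S}^{m-1}$, which requires the matching of total variations to prevent mass from escaping to $\partial \Omega \times \mathbb{S}^{m-1}$ in the limit.
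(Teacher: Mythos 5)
The paper itself gives no proof of Theorem~\ref{lem:reshetnyak}; it is quoted from Reshetnyak in the formulation of Ambrosio--Fusco--Pallara, so the comparison is with the standard textbook proof -- which is exactly the route you take. Your lower semicontinuity part is the classical argument: represent $G(x,z)=\sup_{j}a_{j}(x)\cdot z$ with countably many $a_{j}\in\hold_b(\Omega;\R^{m})$, rewrite the functional as a supremum of finite sums $\sum_{i}\int_{\Omega}\varphi_{i}\,a_{j_{i}}\cdot\dif\mu$ over subordinate continuous cutoffs, and use that each such sum is weak-$\ast$ continuous (each $\varphi_{i}a_{j_{i}}$ is in $\hold_c(\Omega;\R^{m})$) and dominated along the sequence. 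As a sketch this is sound; what remains to be supplied is standard (the representation lemma for jointly lower semicontinuous, possibly $[0,\infty]$-valued integrands, and the localization identity, where one should adjoin the zero function or pass to positive parts so that each summand is indeed dominated by $\varphi_{i}\,G$).

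The continuity part, however, has a genuine gap precisely where the real work of the theorem sits. Testing the lifted measures $\tilde\mu_{k}$ against functions of the form $\varphi(x)$ and $\varphi(x)\omega_{\ell}$ only identifies the projections $|\mu_{k}|$ and the barycenters, i.e.\ the vector measures $\mu_{k}$ themselves; such test functions are not dense in $\hold(\overline{\Omega}\times\mathbb{S}^{m-1})$, so they cannot identify the weak-$\ast$ limit of $\tilde\mu_{k}$ -- a priori the limit could have non-Dirac disintegration kernels with the correct barycenters. The missing argument is: extract a weak-$\ast$ limit $\nu$ of $(\tilde\mu_{k})$ on the compact space $\overline{\Omega}\times\mathbb{S}^{m-1}$, disintegrate $\nu=\lambda\otimes\nu_{x}$, deduce from the above tests that $\mu=\big(\int\omega\,\dif\nu_{x}\big)\lambda$ on $\Omega$, and then exploit the assumed mass convergence in the chain $|\mu|(\Omega)=\int_{\Omega}\big|\int\omega\,\dif\nu_{x}\big|\,\dif\lambda\leq\lambda(\overline{\Omega})=\lim_{k}|\mu_{k}|(\Omega)=|\mu|(\Omega)$: the forced equalities give $\lambda(\partial\Omega)=0$, $\lambda=|\mu|$ on $\Omega$, and -- since a probability measure on the unit sphere whose barycenter has norm one must be a Dirac mass -- $\nu_{x}=\delta_{\dif\mu/\dif|\mu|(x)}$, whence $\tilde\mu_{k}\wstar\tilde\mu$ with converging total masses. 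Only this equality-case argument rules out the loss or smearing of mass that you mention as a ``secondary technical point''. Finally, your claim that $G$ extends continuously to $\overline{\Omega}\times\mathbb{S}^{m-1}$ is not covered by the hypotheses: continuity is assumed only on $\Omega\times K$, and in the paper's application $T_{0}\in\hold_b(\Omega;\R^{N\times n})$ need not extend to $\overline{\Omega}$. What one actually uses is boundedness of $G$ on $\Omega\times(K\cap\mathbb{S}^{m-1})$ (implicit in the cited formulation, and true in the application) together with the narrow convergence of $\tilde\mu_{k}$ on $\Omega\times\mathbb{S}^{m-1}$ obtained above.
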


\begin{remark}
\label{rem_Reshetnyak_application}
In our setting, this result is applied as follows: given a convex function $F \colon \R^{N\times n}\to [0,\infty)$ of linear growth~\eqref{eq:generalisedlingrowth} and $T_{0}\in\hold_b(\Omega;\R^{N\times n})$ verifying the mild coerciveness condition~\eqref{eq:generalisedcoercivcond_weak}, we consider the half-space $K \coloneqq [0,\infty) \times \R^{N \times n}$, that is we choose $m=Nn+1$, and we define~$G$ on $\Omega \times K$ as the perspective integrand 
\begin{align*}
G(x,t,z) \coloneqq \begin{cases} t F(z/t) - T_0(x) \cdot z  &\quad \text{if } t>0,\\
F^{\infty}(z) - T_0(x) \cdot z  &\quad \text{if } t=0,
\end{cases}
\end{align*}
for all $x \in \Omega$, $t \in [0,\infty)$ and $z \in \R^{N \times n}$. In this situation it is easily checked that~$G$ takes values in~$[0,\infty)$ and that it is a continuous function which is convex and $1$-homogeneous in the second variable~$(t,z) \in K$. Hence, we have lower semicontinuity and continuity of~$G$ as stated in Theorem~\ref{lem:reshetnyak}, and with $\mu = (\mathscr{L}^{n},\D w)$ for an arbitrary function $w \in \bv(\Omega;\R^{N})$ we can rewrite the evaluation of~$G$ in terms of~$F$, the recession function $F^\infty$ and $T_0$ as
\begin{align*}
\lefteqn{ \int_{\Omega} G\Big(\cdot\,,\frac{\dif \, (\mathscr{L}^{n}, \D w)}{\dif |(\mathscr{L}^{n},\D w)| }\Big) \dif |(\mathscr{L}^{n},\D w)| } \\
  & = \int_{\Omega_r} G\Big(\cdot\,,\frac{(1,\nabla w)}{|(1,\nabla w)| }\Big) |(1,\nabla w)| \dif \mathscr{L}^{n}
  + \int_{\Omega_s} G\Big(\cdot\,,0, \frac{\dif \D^s w}{\dif |\D^s w| } \Big) \dif |\D^s w| \\
  & = \int_{\Omega} F(\nabla w) \dif x + \int_\Omega F^\infty \Big(\frac{\dif \D^s w}{\dif |\D^s w|} \Big) \dif |\D^s w| - \int_\Omega T_0 \cdot \dif \D w ,
\end{align*}
where by $\Omega_r, \Omega_s \subset \Omega$ we have denoted a disjoint decomposition of $\Omega$ with the property $\mathscr{L}^{n}(\Omega_s)=|\D^{s} w|(\Omega_r)=0$ and hence, for the densities we may use
\begin{equation*}
\frac{\dif \mathscr{L}^{n}}{ \dif |\D w|} = 0 \quad \text{and} \quad \frac{\dif \D w}{\dif |\D w|} = \frac{\dif \D^s w}{\dif |\D^s w|} \quad \text{on~}\Omega_s.
\end{equation*}
For the application of Theorem~\ref{lem:reshetnyak} we finally note that whenever $(w_k)_{k \in \N}$ is a sequence converging weakly-$*$ to some function~$w$ in $\bv(\Omega;\R^{N})$, then $(\mathscr{L}^{n}, \D w_k)$ converges weakly-$\ast$ to $(\mathscr{L}^{n}, \D w)$ in $\mathcal{M}(\Omega;\R^{N \times n +1})$. 
\end{remark}

\subsection{The Dual Problem in the Framework of Ekeland and Temam}\label{sec:dualekelandtemam}

In their treatise~\cite{EKETEM99}, Ekeland and Temam introduced a rather general framework of convex duality into which the Neumann problem on $\sobo^{1,1}(\Omega;\R^N)$ as described in our paper can be embedded in a natural way. Here we briefly discuss its relation to the setting of Section~\ref{sec:duality}.

In order to set up this framework, let $V,Y$ be two topological vector spaces with dual spaces $V^{*}$, $Y^{*}$ and suppose that a functional $\mathcal{F}\colon V\to \R\cup\{\infty\}$ can be written as
\begin{equation*}
\mathcal{F}[v]=J(v,\Lambda v) \qquad \text{for all } v \in V,
\end{equation*}
with a continuous, linear mapping $\Lambda\colon V\to Y$ and a convex function $J \colon V\times Y\to  \R\cup\{\infty\}$. When defining the convex conjugate function $J^* \colon V^* \times Y^* \to \R\cup\{\infty\}$ via
\begin{equation*}
 J^{*}(v^*,y^*) \coloneqq \sup_{v \in V, y \in Y} \big\{ \langle v^*,v \rangle_{V^* \times V} + \langle y^*,y \rangle_{Y^* \times Y} - J(v,y) \big\} \quad \text{for } v^* \in V^*, \, y^* \in Y^*,
\end{equation*}
we can introduce, following~\cite[Section~III.4]{EKETEM99}, the dual problem to the minimisation of~$\mathcal{F}$ over~$V$ in the sense of Ekeland and Temam as the problem
\begin{equation}
\label{dual_EKETEM99}
 \text{to maximise } -J^{*}(\Lambda^{*}y^{*},-y^{*}) \quad \text{among all } y^* \in Y^{*},
\end{equation}
where $\Lambda^{*}\colon Y^{*}\to V^{*}$ is the adjoint operator of~$\Lambda$ (with $\langle \Lambda^* y^*,v \rangle_{V^* \times V} = \langle y^*,\Lambda v \rangle_{Y^* \times Y}$ for all $v \in V$). Under the assumptions $\inf_{V}\mathcal{F} \in \R$ and that there exists $v_{0}\in V$ with $J(v_{0},\Lambda v_{0})<\infty$ and $p \mapsto J(v_{0},p)$ being continuous at $\Lambda v_{0}$, then by \cite[Theorem~III.4.1]{EKETEM99} there holds the duality correspondence
\begin{equation*}
\inf_{v\in V} \mathcal{F}[v]=\sup_{y^{*}\in Y^{*}} -J^{*}(\Lambda^{*}y^{*},-y^{*}).
\end{equation*}
We now specialize to the situation that the functional~$J_V$ splits into
\begin{equation*}
J(v,y)=J_V(v)+J_Y(y)  \qquad \text{for all } v \in V \text{ and } y \in Y,
\end{equation*}
with two convex functionals $J_V\colon V\to\R\cup\{\infty\}$ and $J_Y\colon Y\to\R\cup\{\infty\}$. The convex conjugate clearly preserves the splitting structure $J^{*}(v^*,y^*) = J_V^*(v^*) + J_Y^*(y^*)$ into the convex conjugates $J_V^* \colon V^* \to \R\cup\{\infty\}$ and~$J_Y^* \colon Y^* \to \R\cup\{\infty\}$ of~$J_V$ and~$J_Y$. Consequently, the dual problem here is to maximise
$-J_V^{*}(\Lambda^{*}y^{*})-J_Y^{*}(-y^{*})$ among all $y^* \in Y^{*}$.

In order to apply this abstract theory to the functional $\mathcal{F}$ in~\eqref{eq:varprin_general} (with~$F$ of linear growth~\eqref{eq:generalisedlingrowth} and with $T_{0}\in\lebe^{\infty}(\Omega;\R^{N\times n})$), we set $V\coloneqq \sobo^{1,1}(\Omega;\R^{N})$, $Y\coloneqq \lebe^{1}(\Omega;\R^{N\times n})$ and $\Lambda \coloneqq \nabla$ the weak gradient operator. We then define~$J$ in splitting form via the functionals $J_V \colon \sobo^{1,1}(\Omega;\R^{N}) \to \R$ and $J_Y \colon \lebe^{1}(\Omega;\R^{N\times n}) \to \R$ given as
\begin{equation*}
 J_V \equiv 0 \quad \text{and} \quad J_Y(y)\coloneqq \int_{\Omega} \big[ F(y) - T_0 \cdot y \big] \dif x \quad \text{for } y \in \lebe^{1}(\Omega;\R^{N\times n}).
\end{equation*}
For the identification of the dual problem~\eqref{dual_EKETEM99} with the integral formulation~\eqref{eq:varprin_dual}, let us first observe that we need to maximise among functions in $Y^*= \lebe^{\infty}(\Omega;\R^{N\times n})$. Moreover, since $J_V^{*}(\Lambda^{*}y^{*})=\infty$ whenever $\langle \Lambda^{*}y^{*},v \rangle_{V^* \times V} = \langle y^{*}, \Lambda v \rangle_{Y^* \times Y} \neq 0$ and $J_V^{*}(\Lambda^{*}y^{*})=0$ otherwise, it is sufficient to consider in the maximisation problem~\eqref{dual_EKETEM99} only $y^* \in Y^*$ with $\langle y^{*}, \Lambda v \rangle_{Y^* \times Y} = 0$ for all $v \in V$, which precisely amounts to requiring $y^{*}\in\lebe_{\bot}^{\infty}(\Omega;\R^{N})$ as used before in~\eqref{eq:varprin_dual}. Thus, it only remains to maximise $-J_Y^{*}(-y^{*})$ given by
\begin{align*}
 - J_Y^{*}(-y^{*}) & = - \sup_{y\in Y} \bigg\{ \langle - y^{*},y\rangle_{Y^{*}\times Y} - \int_{\Omega} \big[ F(y)- T_{0} \cdot y \big] \dif x\bigg\} \\
 &  = - \sup_{y \in Y} \bigg\{ \langle T_0 - y^* , y \rangle_{Y^* \times Y} - \int_\Omega F(y) \dif x \bigg\} = - \int_\Omega F^*(T_0 - y^*) \dif x,
\end{align*}
where we also used~\cite[Section~IV.1]{EKETEM99} to pass from the convex conjugate of the functional to the functional with convex conjugate integrand. In conclusion, this explains the choice of the space $\lebe_{\bot}^{\infty}(\Omega;\R^{N\times n})$ and the duality correspondence
\begin{equation}\label{eq:appendixconveq}
 \inf_{\sobo^{1,1}(\Omega;\R^{N})}\mathcal{F} = \sup_{y^* \in \lebe_{\bot}^{\infty}(\Omega;\R^{N\times n})}  - \int_\Omega F^*(T_0 - y^*) \dif x =  \sup_{\lebe_{\bot}^{\infty}(\Omega;\R^{N\times n})}\mathcal{R}_{T_0}.
 \end{equation}
from the perspective of convex analysis (and since $\lebe_{\bot}^{\infty}(\Omega;\R^{N\times n})$ is a linear space, the sign of~$y^*$ in this formula is irrelevant).

\subsection{Proof of Lemma~\ref{lem:consistencylemma}}\label{sec:consistencylemma}
We now demonstrate the consistency result, Lemma~\ref{lem:consistencylemma}.
\begin{proof}
In view of $u\in\sobo^{2,\infty}(\Omega;\R^{N})$, we may extend $\nabla u$ to a Lipschitz function on $\overline{\Omega}$. By minimality of~$u$, it~satisfies the Euler--Lagrange system~\eqref{eq:EL_system}, which, for $\varphi \in\sobo_{0}^{1,1}(\Omega;\R^{N})$, implies after the application of the integration by parts formula
\begin{equation*}
\int_{\Omega} \di\bigg(\frac{f'(|\nabla u|)\nabla u}{|\nabla u|} - T_0\bigg) \cdot \varphi \dif x = 0.
\end{equation*}
By arbitrariness of $\varphi \in\sobo_{0}^{1,1}(\Omega;\R^{N})$, we deduce~\eqref{eq:PDE} for almost every $x \in \Omega$ by use of the Du Bois--Reymond Lemma.

In order to prove the validity of the second identity~\eqref{eq:neumann}, we consider general test functions $\varphi\in\sobo^{1,1}(\Omega;\R^{N})$ in the Euler--Lagrange system~\eqref{eq:EL_system}. To this end, we localize at the boundary, via a family of function $(\eta_{\delta})_{\delta > 0}$ in $\hold^{2,1}(\R^n;[0,1])$ such that $\eta_{\delta}$ satisfies $\eta_{\delta}\equiv 1$ on $\partial\Omega$ and vanishes outside of $\{x\in\Omega \colon \dista(x,\partial\Omega)>\delta \}$, for each $\delta>0$ . Then, with the integration by parts formula and $\eta_{\delta}=1$ on $\partial \Omega$, we obtain
\begin{align*}
 0 & = \int_{\Omega} \bigg( \frac{f'(|\nabla u|)\nabla u}{|\nabla u|} - T_0 \bigg) \cdot \nabla (\varphi \eta_{\delta} ) \dif x \\
 & = \int_{\partial\Omega} \bigg(\frac{f'(|\nabla u|)\nabla u}{|\nabla u|}-T_{0}\bigg)\cdot \varphi \otimes \nu_{\partial\Omega} \dif\mathcal{H}^{n-1} -\int_{\Omega} \di\bigg(\frac{f'(|\nabla u|)\nabla u}{|\nabla u|}-T_{0}\bigg) \cdot \varphi \eta_{\delta}  \dif x.
\end{align*}
Then, by the $\hold^{2}$ regularity of~$f$ combined with $\nabla u, T_{0}\in\sobo^{1,\infty}(\Omega;\R^{N\times n})$ and by the convergence  $\eta_{\delta}(x) \to 0$ for all $x \in \Omega$ as $\delta\searrow 0$, Lebesgue's dominated convergence theorem shows that the second term on right-hand side of the previous equation vanishes in the limit $\delta\searrow 0$. Hence,~\eqref{eq:neumann} follows again by Du Bois--Reymond's Lemma on $\partial\Omega$.
\end{proof}

\subsection{Ubiquity of the $h$-monotonicity}\label{sec:hconvex}
We finally show that the $h$-monotonicity condition~\eqref{eq:hmonotone} is indeed satisfied for any strictly convex function   $f\in\hold^{2}(\R_{0}^{+})$ which satisfies $f(0) = f'(0)= 0$ and the linear growth condition~\eqref{eq:lingrowth} as assumed in Theorem~\ref{thm:main}. For this purpose, we compute for arbitrary $z,\zeta\in\R^{N\times n}$ with $z\neq 0$
\begin{align}\label{eq:formula_Hessian}
 \D_{zz} f(|z|) [\zeta,\zeta ] & =  \D_z \Big(\frac{f'(|z|)}{|z|}z \Big) [ \zeta, \zeta ] \nonumber\\
 & = \Big( f''(|z|) \frac{z \otimes z}{|z|^2} + \frac{f'(|z|)}{|z|} \frac{|z|^2 \id_{N \times n} - z \otimes z}{|z|^2} \Big) [ \zeta, \zeta ]\nonumber\\
 & = f''(|z|) \frac{(z\cdot\zeta)^{2}}{|z|^2} + \frac{f'(|z|)}{|z|} \frac{|z|^2 |\zeta|^2 - (z\cdot\zeta)^{2}}{|z|^2}\\ &  \geq \min\left\{f''(|z|),\frac{f'(|z|)}{|z|} \right\}|\zeta|^{2} \nonumber.
\end{align}
Here we have used that because of $(z\cdot \zeta)^{2}\leq |z|^{2}|\zeta|^{2}$, both terms on the penultimate line of the previous estimation are non-negative. We next define
\begin{align*}
 h(t) \coloneqq  \min\left\{f''(t),\frac{f'(t)}{t} \right\} \qquad \text{for } t>0.
\end{align*}
We observe that~$h$ is continuous on~$\R^+$, since $f\in\hold^{2}(\R_{0}^{+})$, and it can be continuously extended to~$\R_{0}^{+}$ by setting $h(0) = f''(0)$ (since $f'(0) =0$ implies $f'(t)/t \to f''(0)$ as $t \to 0$). Moreover,~$h$ is also strictly positive almost everywhere on~$\R_{0}^{+}$, since~$f'$ as the derivative of a strictly convex function is strictly monotonically increasing with $f'(0)=0$ and consequently we also have $f''>0$ almost everywhere on~$\R_{0}^{+}$. With~$h$ defined in this way, we can now continue to estimate~\eqref{eq:formula_Hessian} and conclude with $\D_{zz} f(|z|) [\zeta,\zeta] \geq  h(|z|) |\zeta|^{2}$ for all $z,\zeta\in\R^{N\times n}$, which is the claimed lower bound in~\eqref{eq:hmonotone}. To obtain also the upper bound in~\eqref{eq:hmonotone}, we use~\eqref{eq:formula_Hessian} to see that
\begin{equation}\label{eq:formula_Hessian_2}
 \D_{zz} f(|z|) [\zeta,\zeta ] \leq \max\left\{f''(|z|),\frac{f'(|z|)}{|z|} \right\}|\zeta|^{2}
\end{equation}
for arbitrary $z,\zeta\in\R^{N\times n}$ with $z\neq 0$. Since $f'$ is monotonically increasing with $f'(0)=0$, we can use~\eqref{eq:lingrowth} to get for each $\tau \in \R_{0}^{+}$
\begin{equation*}
0\le f'(\tau)\le \lim_{t\to \infty} f'(t) = \lim_{t\to \infty}\frac{f(t)}{t} \le L.
\end{equation*}
In addition, we know that $f'(t)/t$ is continuous for $t\in [0,\infty)$. Therefore, employing also the assumption~\eqref{eq:sec_der_bound}, the upper bound in~\eqref{eq:hmonotone} directly follows from~\eqref{eq:formula_Hessian_2}.

\end{document}